\documentclass[11pt]{article}

\usepackage[a4paper,margin=2.7cm]{geometry}
\usepackage[T1]{fontenc}
\usepackage[utf8]{inputenc}

\usepackage{amsmath,amssymb,amsfonts,amsthm}
\usepackage{mathtools}
\usepackage{mathrsfs}
\usepackage{bm}
\usepackage{bbm}

\usepackage{graphicx}
\usepackage{xcolor}
\usepackage{hyperref}

\theoremstyle{definition}
\newtheorem{defi}{Definition}[section]

\theoremstyle{plain}
\newtheorem{prop}[defi]{Proposition}
\newtheorem{lem}[defi]{Lemma}
\newtheorem{theorem}[defi]{Theorem}
\newtheorem{coro}[defi]{Corollary}

\theoremstyle{remark}
\newtheorem{rem}[defi]{Remark}

\newcommand{\cL}{\mathcal{L}}
\newcommand{\cT}{\mathcal{T}}
\newcommand{\RR}{\mathbb{R}}
\newcommand{\CC}{\mathbb{C}}
\newcommand{\HH}{\mathbb{H}}
\newcommand{\OO}{\mathbb{O}}
\newcommand{\ZZ}{\mathbb{Z}}
\newcommand{\bu}{\mathbbm{1}}
\newcommand{\buTwo}{\mathbbm{1}_2}
\newcommand{\Spin}{\mathrm{Spin}}
\newcommand{\SO}{\mathrm{SO}}
\newcommand{\Herm}{\mathrm{Herm}}
\newcommand{\ii}{\mathrm{i}}
\newcommand{\st}{^{\!*}} 
\newcommand{\Lm}[1]{L_{#1}} 
\newcommand{\Bf}{B} 

\newcommand{\Rp}{\mathrm{R}} 
\newcommand{\Lp}{\mathrm{L}} 

\newcommand{\DA}{\mathbb{A}}
\DeclareMathOperator{\End}{End}

\title{A $2\times2$ Matrix Approach to Hopf Fibrations:\\
Quaternionic Reduction of Octonionic Bi-unitary Matrices\\ and an RLLR/LRRL Bracketing Rule}

\author{Jean-Pierre Gazeau${}^\ast{}^{\dagger}$ and Aneta Sli\.{z}ewska${}^\ast$\\
${}^\ast$ Faculty of Mathematics, University of Bia\l ystok, 15-245 Bia\l ystok, Poland\\
${}^{\dagger}$Universit\'e Paris Cit\'e, CNRS, Astroparticule et Cosmologie, F-75013 Paris, France\\
\small\textit{j.gazeau@uwb.edu.pl, gazeau@apc.in2p3.fr,  anetasl@uwb.edu.pl}}
\date{\today}

\begin{document}
\maketitle

\begin{abstract}
We develop a common two-by-two matrix framework for the four classical Hopf fibrations
associated with the real, complex, quaternionic and octonionic numbers. In all four cases the
Hopf map is the image of the north pole of a sphere of trace-free Hermitian matrices under
conjugation by a bi-unitary matrix, and it depends only on the first column of that matrix,
without any bracketing convention, even for the non-associative octonions. Our main results
concern general octonionic bi-unitary matrices, beyond the compatible matrices, with entries in
a single complex subalgebra, on which earlier work on octonionic Lorentz transformations relies.
We prove that every octonionic bi-unitary matrix is in fact quaternionic: its entries lie in a
quaternion subalgebra depending on the matrix. We show that requiring Hermitian output leaves
exactly four inequivalent ways of bracketing the octonionic triple products, among them an
interlaced rule called RLLR/LRRL, and that each of them splits into an ordinary
five-dimensional rotation and a four-dimensional scaling governed by a non-commutative
determinant. This determines exactly when the evaluation preserves the geometry of the sphere,
shows that it may contract or even collapse four dimensions, and explains why it cannot be
compatible with matrix multiplication. We also recover in compact form, through the Clifford
map, the known fact that suitable compatible matrices, which we interpret as transvections of
the eight-sphere, generate the spin group $\Spin(9)$ by nested evaluations. Non-associativity
thus does not prevent a two-by-two octonionic description of the full $\Spin(9)$ symmetry; it
prevents that description from being realised through unrestricted multiplication of
octonionic matrices.
\end{abstract}

\noindent\textbf{Keywords:} Hopf fibrations; octonions; non-associativity; bi-unitary matrices;
quaternionic reduction; non-commutative determinants; $\Spin(9)$; Clifford map; bracketing rule.

\tableofcontents

\section{Introduction}
\label{sec:intro}

The four normed division algebras over $\RR$ --- real numbers $\RR$,
complex numbers $\CC$, quaternions $\HH$, and octonions $\OO$ ---
each give rise to a classical Hopf fibration:
\begin{equation}
\label{hopfmap}
\mathbb S^0\hookrightarrow\mathbb S^1\stackrel{\pi}{\rightarrow} \mathbb S^1,\quad \mathbb{S}^1\hookrightarrow \mathbb{S}^3  \stackrel{\pi}{\rightarrow}  \mathbb{S}^2\, , \quad  \mathbb{S}^3\hookrightarrow\mathbb{S}^7 \stackrel{\pi}{\rightarrow}  \mathbb{S}^4\, , \quad  \mathbb{S}^7\hookrightarrow\mathbb{S}^{15}  \stackrel{\pi}{\rightarrow}  \mathbb{S}^8\, ,
\end{equation}
where ``$\hookrightarrow$'' stands for embedding of the fiber and~$\pi$ is the Hopf map.

The Hopf construction, in~its broader application, yields circle bundles
$\mathbb{S}^{2n+1} \stackrel{\pi}{\rightarrow} \CC \mathbf{P}^n$ over complex projective space,
obtained by restricting the tautological line bundle over $\CC \mathbf{P}^n$ to the unit
sphere in $\CC^{n+1}$. In the same way, viewing $\mathbb{S}^{4n-1}\subset\HH^n$ and
factoring out right multiplication by the group of unit quaternions ($\simeq\mathbb S^3$) yields
$\mathbb{S}^{4n-1}\to\HH\mathbf{P}^{n-1}$; for $n=2$, since $\HH\mathbf P^1\cong\mathbb S^4$,
this is the bundle $\mathbb{S}^7 \stackrel{\pi}{\rightarrow} \mathbb{S}^4$ with fiber $\mathbb{S}^3$.
Over the octonions this construction breaks down in general: the unit octonions
$\mathbb S^7$ do not form a group, and identifying $(u_1,\dots,u_n)$ with
$(u_1\lambda,\dots,u_n\lambda)$, $|\lambda|=1$, does not define an equivalence relation on
$\OO^n$ when $n\ge3$. For $n=2$ it can be saved, because any two octonions generate an
associative subalgebra: the octonionic line through $(u,v)\neq0$ is well defined, being
determined by the ratio $uv^{-1}$ (or by $u=0$), and the space of such lines is
$\OO\mathbf P^1\cong\mathbb S^8$. This gives the bundle
$\mathbb{S}^{15} \stackrel{\pi}{\rightarrow} \mathbb{S}^8$ with fiber $\mathbb{S}^7$. (The Cayley
plane $\OO\mathbf P^2$ exists, but it is not the base of any analogous bundle with total space
$\mathbb S^{23}$.) At the bottom of the ladder, $\mathbb{S}^1\subset\RR^2$ modulo multiplication
by $\pm1$ is $\RR\mathbf{P}^1\cong\mathbb{S}^1$, which gives
$\mathbb{S}^1 \stackrel{\pi}{\rightarrow} \mathbb{S}^1$ with fiber $\mathbb{S}^0=\{-1,1\}$. Likewise, a ``circle of latitude'' $\{z=\cos\alpha\}$, $0<\alpha<\pi$, of $\mathbb S^1$ is the
$0$-sphere $\{(\cos\alpha,\pm\sin\alpha)\}$, and its Hopf inverse is the four-point set
$\theta\in\{\pm\frac{\alpha}{2},\pm\frac{\alpha}{2}+\pi\}\cong\mathbb S^0\times\mathbb S^0$,
the real counterpart of the torus $\mathbb S^1\times\mathbb S^1$ found below for $\mathbb S^3\to\mathbb S^2$; for the equator
$z=0$ this is $\theta\in\frac{\pi}{4}+\frac{\pi}{2}\ZZ$ (Lemma~\ref{lem:fibers} with $n=1$).

These four cases are distinguished by a classical rigidity result. If
$\mathbb S^k\hookrightarrow\mathbb S^m\to\mathbb S^n$ is a fiber bundle with $n\ge1$ whose
fiber, total space and base are spheres, then $(k,m,n)\in\{(0,1,1),(1,3,2),(3,7,4),(7,15,8)\}$:
a classical argument based on the long exact homotopy sequence shows that $m=2n-1$,
$k=n-1$ and that the bundle projection has Hopf invariant one, and Adams' theorem on the non-existence of elements of Hopf invariant
one leaves only $n\in\{1,2,4,8\}$ \cite{Hopf,Adams,Husemoller}. (The statement
concerns the dimensions; it does not assert that the bundle is unique in each dimension.)
Among the four, the octonionic case $\mathbb{S}^7\hookrightarrow\mathbb{S}^{15}  \stackrel{\pi}{\rightarrow}  \mathbb{S}^8$
stands apart: the loss of associativity in $\OO$ complicates even the most basic explicit computations.

\bigskip
\noindent\textbf{Motivation from mathematical physics.}
Transformations of the form $X\mapsto gXg^\ddagger$, or $(gX)g^\ddagger$ in the
non-associative case, acting on $2\times2$ matrices $X$ that are Hermitian for a suitable
involution $\ddagger$ and preserving a (pseudo-)metric $B(X,X)$ read off from $X^2$ or
$-\det X$, are ubiquitous in physics. The prototype is the action of $\mathrm{SL}(2,\CC)$
on $X=x^\mu\sigma_\mu$, which realises the Lorentz group as $\mathrm{SL}(2,\CC)/\{\pm1\}$ and
preserves $\det X=x_\mu x^\mu$ \cite{PenroseRindler}. Running through the division algebras
gives $\mathfrak{sl}(2,\DA)\cong\mathfrak{so}(\dim_\RR\DA+1,1)$ for $\DA=\RR,\CC,\HH,\OO$
\cite{Sudbery1984,KugoTownsend1983}, and the resulting dimensions $3,4,6,10$ are precisely
those in which the classical superstring and minimal super-Yang--Mills theories exist
\cite{KugoTownsend1983,Evans1988,BaezHuerta2010}. Changing the involution yields other
physically relevant groups. Examples include the de Sitter group
$\Spin(4,1)\cong\mathrm{Sp}(1,1)$, acting on quaternionic matrices \cite{Takahashi1963}; the
Euclidean conformal group $\Spin(5,1)\cong\mathrm{SL}(2,\HH)$, acting by Möbius maps on
$\HH\cup\{\infty\}\cong\mathbb S^4$ \cite{Ahlfors1985}; and the anti de Sitter and conformal
groups $\SO(3,2)$, $\SO(4,2)$, treated in this language in \cite{gazpejtod26}. In atomic
physics, the dynamical symmetries $\SO(4)\subset\SO(4,1)\subset\SO(4,2)$ of the hydrogen atom
\cite{Fock1935,BarutKleinert1967} admit the same quaternionic description, which yields exact
closed-form multiphoton matrix elements \cite{Gazeau1978,Gazeau1979}. The pattern also extends to internal degrees of freedom. Take for $X$ the ``north pole''
$\mathrm{diag}(1,-1)$, and let $\psi\in\DA^2$ be the first column of a matrix $g$ with
$gg^\dagger=g^\dagger g=\buTwo$, so that $\psi^\dagger\psi=1$. Then
$g\,\mathrm{diag}(1,-1)\,g^\dagger=2\psi\psi^\dagger-\buTwo$, where $\psi\psi^\dagger$
is the projector onto the unit vector $\psi$, i.e.\ the density matrix of a pure state.
The Hopf map, which we recover in this form in Proposition~\ref{prop:hopfmatrix}, is thus
the passage from a pure state to its point on the ``Bloch'' sphere.
For $\DA=\CC$ this is the qubit, while the quaternionic and octonionic
fibrations encode two- and three-qubit entanglement \cite{MosseriDandoloff2001,BernevigChen2003}.
Octonionic and Jordan-algebraic structures entered the description of observables and of
colour early on \cite{JordanvNW1934,GunaydinGursey1973}. Across these examples, the
$2\times2$ formalism $X\mapsto gXg^\ddagger$ appears as a common language for objects of
external (space-time, possibly higher-dimensional or with several times) and internal
nature, which suggests that it reflects a structural rather than accidental feature of
physical symmetries. The present paper tests how far this language survives
non-associativity. Over $\OO$ it survives at the level of the spin group generated by
composed evaluations, but not at the level of the matrix product: outside the compatible
class, preservation of $B(X,X)$ is governed by a non-commutative determinant.

\bigskip
\noindent\textbf{Aim of the paper.}
We use $2\times2$ matrix calculus systematically to describe the four classical Hopf
fibrations, and determine precisely how far that calculus carries over to the octonions.
In all four cases the Hopf map is the image of the north pole of the sphere of trace-free
Hermitian matrices under $X\mapsto (gX)g^\dagger$, evaluated on the first column of a
bi-unitary matrix $g\in\cL_\DA$, without any bracketing convention, also over $\OO$.
Earlier work on octonionic Lorentz transformations
\cite{ManogueSchray1993,ManogueDray1999,DrayManogue2015} restricts to \emph{compatible}
matrices, whose entries lie in a single complex subalgebra of $\OO$, so that the bracketing
problem does not arise, and realises the full symmetry group by nesting such
transformations. Our main results concern \emph{general} octonionic bi-unitary matrices;
they are summarised below, after two elementary reminders.

\bigskip
\noindent\textbf{A reminder of the second Hopf map $\mathbb{S}^3 \stackrel{\pi}{\rightarrow} \mathbb{S}^2$.}
We use the correspondence between elements $ \xi= (\xi_0, \pmb{\xi})=\left(\cos \frac{\theta}{2}, \sin\frac{\theta}{2}\mathbf{n}\right) \in \mathbb{S}^3$, $0\leq \theta \leq 2\pi$, and rotations $\mathcal{R}(\mathbf{ n}, \theta)$ in space \cite{gazeau24}:
\begin{equation*}
\label{S3rotA}
\mathbf{ r}'= \mathcal{R}(\mathbf{ n}, \theta)\bullet \mathbf{r}  \ \Leftrightarrow (0, \mathbf{r}')= \xi (0,\mathbf{r})\bar \xi\,,
\end{equation*}
where we use the quaternionic multiplication rule in scalar-vector notation
\begin{equation*}
qq^{\prime}= (q_0,\mathbf{q})(q^{\prime}_0,\mathbf{q}^{\prime})=(q_0q^{\prime}_0 - \mathbf{q}\cdot\mathbf{q}^{\prime}, q_0\mathbf{q}^{\prime} + q^{\prime}_0 \mathbf{q}+ \mathbf{q} \times \mathbf{q}^{\prime})\,,
\end{equation*}
and $\bar q = (q_0,-\mathbf{q})$ is the quaternionic conjugate of $q$, which is also its inverse if $q=\xi\in \mathbb{S}^3$.
We use the complex $2\times 2$ matrix representation of quaternions
 \begin{equation}
 \label{Mquart}
 \HH \ni q \mapsto \mathsf{M}(q)=  \begin{pmatrix}
 q_0 + \ii q_3     &  -q_2 + \ii q_1  \\
 q_2 + \ii q_1     &  q_0 - \ii q_3
\end{pmatrix},
 \end{equation}
which is an injective homomorphism of real algebras, with $\det \mathsf M(q)=|q|^2$.
It restricts to an isomorphism $\mathbb S^3\cong\mathrm{SU}(2)$, and, since every
nonzero quaternion has a unique polar decomposition $q=r\xi$ with $r>0$ and
$\xi\in\mathbb S^3$, to an isomorphism $\HH^\times\cong\RR_{>0}\times\mathrm{SU}(2)$.
Consider now the element $\xi = (\xi_0, \xi_1, \xi_2, \xi_3)$ of $\mathbb{S}^3\cong \mathrm{SU}(2)$ which brings the north pole of $\mathbb{S}^2$ to the direction $\hat{\mathbf{r}}= (x, y, z) \in \mathbb{S}^2$:
\begin{equation}
\label{poltor}
\xi(0,0,0,1)\bar\xi= (0,x,y,z)\,, \quad x^2 +y^2+ z^2=1.
\end{equation}
Using instead the Hermitian matrix notation $\begin{pmatrix}
  z    &  x +\ii y  \\
  x - \ii y    &  -z
\end{pmatrix}\equiv -\ii \mathsf{M}(0,x,y,z)$, \eqref{poltor} reads as the (co-)adjoint action of $\mathrm{SU}(2)$
on the ``north pole'' $X_N = \begin{pmatrix}
1     &  0  \\
 0     &  -1
\end{pmatrix}$ in the orbit $\mathrm{SU}(2)/\mathrm{U}(1)\sim \mathbb{S}^2$:
 \begin{equation}
\label{matpoltor}
\mathsf{M}(\xi)X_N
\mathsf{M}^{\dag}(\xi)
=\begin{pmatrix}
 z     &  x + \ii y  \\
 x - \ii y     &  -z
\end{pmatrix}, \qquad \mathsf{M}(\xi)\in \mathcal{L}_{\HH}.
\end{equation}
This gives the quadratic relations which exemplify the projection $\pi$ (Hopf's map) of $\mathbb{S}^3$ onto the base space $\mathbb{S}^2$:
\begin{equation}
\label{sechopf}
\mathbb{S}^3 \ni \xi \stackrel{\pi}{\rightarrow}
\left\lbrace
 \begin{array}{cc}
x =& 2(\xi_0 \xi_2 + \xi_1\xi_3)\\
y =& 2(\xi_2 \xi_3 - \xi_0 \xi_1)\\
z =& \xi_0^2 + \xi_3^2 - \xi_1^2 - \xi_2^2
\end{array} \right.\, \in \mathbb{S}^2.
\end{equation}
One easily checks that the Hopf inverse of a point of $\mathbb{S}^2$ is a circle
$\mathbb{S}^1 \subset \mathbb{S}^3$ (the diagonal subgroup $\mathrm U(1)$ fixes $X_N$),
and that the Hopf inverse of a circle of latitude in $\mathbb{S}^2$ is a torus
$\mathbb{S}^1 \times \mathbb{S}^1$: fixing $z=\cos\alpha$ in \eqref{sechopf} forces
$\xi_0^2+\xi_3^2=\cos^2\frac{\alpha}{2}$ and $\xi_1^2+\xi_2^2=\sin^2\frac{\alpha}{2}$, so
$(\xi_0,\xi_3)$ and $(\xi_1,\xi_2)$ independently parametrize the two factor circles of the torus.

For $\HH$ the adjoint action $X\mapsto gXg^\dagger$ on trace-free Hermitian $2\times 2$
matrices is completely unambiguous. Extending this framework to the octonions $\OO$
encounters a key obstacle, non-associativity: the expression $(gX)g^\dagger$ involves triple
products for which the two bracketings $(g_{ik}X_{kj})g^\dagger_{jl}$ and
$g_{ik}(X_{kj}g^\dagger_{jl})$ may differ.

\bigskip
\noindent\textbf{The often overlooked first Hopf map
$\mathbb S^0=\{-1,1\}\hookrightarrow\mathbb S^1
\stackrel{\pi}{\longrightarrow}\mathbb S^1$.}
Topologically, this is the quotient map $\mathbb S^1\to\RR\mathbf P^1$ by the antipodal
involution, followed by the standard identification $\RR\mathbf P^1\cong\mathbb S^1$; in
angular terms it is $e^{\ii\theta}\mapsto e^{2\ii\theta}$. It is a connected two-sheeted
covering of degree two, and therefore not a trivial covering (a trivial two-sheeted
covering of $\mathbb S^1$ would have a disconnected total space). This is the real
counterpart of the non-triviality of the other three Hopf bundles.

It is instructive to redo the matrix construction one step down the Cayley--Dickson
ladder, for $\DA=\RR$, where $\CC$ (represented by real $2\times2$ matrices) plays the role
that $\HH$ (represented by complex $2\times2$ matrices) just played above.
Represent $\xi=(\xi_0,\xi_1)\in\mathbb S^1\subset\RR^2\cong\CC$ by the real rotation matrix
\begin{equation}
\label{MxiR}
\RR^2\cong\CC\ \ni\ \xi\ \longmapsto\ \mathsf M(\xi)=
\begin{pmatrix}\xi_0 & -\xi_1\\ \xi_1 & \xi_0\end{pmatrix}\in \mathrm{SO}(2),
\qquad \xi_0^2+\xi_1^2=1,
\end{equation}
and identify the trace-free symmetric real $2\times2$ matrices
$X(z,x)=\begin{pmatrix}z&x\\ x&-z\end{pmatrix}$ with $z^2+x^2=1$ with the unit circle of
$\Herm_2(\RR)_0\cong\RR^2$ (the $\DA=\RR$ row of Table~\ref{tab:cases}).
Take the north pole $X_N=\mathrm{diag}(1,-1)$. A direct computation gives
\begin{equation}
\label{PoltorR}
 \mathsf M(\xi) X_N {}^\mathrm{t}\mathsf M(\xi)
=\begin{pmatrix} \xi_0^2-\xi_1^2 & 2\xi_0\xi_1\\ 2\xi_0\xi_1 & -(\xi_0^2-\xi_1^2)\end{pmatrix},\qquad \mathsf M(\xi)\in \mathcal{L}_{\CC},
\end{equation}
so that, as in \eqref{sechopf},
\begin{equation}
\label{firsthopf}
\mathbb{S}^1 \ni \xi \stackrel{\pi}{\rightarrow}
\left\lbrace
 \begin{array}{cc}
x =& 2\xi_0 \xi_1\\
z =& \xi_0^2  - \xi_1^2
\end{array} \right.\, \in \mathbb{S}^1.
\end{equation}
With $\xi_0=\cos\theta$, $\xi_1=\sin\theta$ this is $(z,x)=(\cos2\theta,\sin2\theta)$:
the matrix construction reproduces precisely the angle-doubling map
$\theta\mapsto 2\theta \pmod{2\pi}$, with fiber
$\pi^{-1}(z,x)=\{\theta,\theta+\pi\}\cong\mathbb S^0$. This is a fully explicit
$2\times2$ matrix illustration of Proposition~\ref{prop:hopfmatrix} at the bottom of the
Cayley--Dickson tower, before the complications of $\HH$ and $\OO$ appear.

\bigskip
\noindent\textbf{Three distinct achievements.}
For the octonionic case it is essential to distinguish between three statements of
increasing strength, which are sometimes conflated:
\begin{enumerate}
\item[(H)] \emph{recovering the Hopf map} $\pi:\mathbb S^{15}\to\mathbb S^8$ as the
image of the north pole under a matrix formula;
\item[(T)] obtaining, for each bi-unitary matrix $g\in \mathcal{L}_{\OO}$, an \emph{algebraic transformation}
$X\mapsto\mathrm{Eval}(g;X)$ of the target $\Herm_2(\OO)_0\supset\mathbb S^8$;
\item[(S)] \emph{realising the $\Spin(9)$ symmetry}: exhibiting a spinorial action $S$ on
$\OO^2\cong\RR^{16}$ and an orthogonal action $\rho(S)$ on $\RR^9$ such that
$\pi(S\xi)=\rho(S)\pi(\xi)$, and identifying $\mathrm{Eval}$ with $\rho$.
\end{enumerate}
Statement (H) holds uniformly and without any bracketing convention
(Proposition~\ref{prop:hopfmatrix}). Statement (T) requires a convention; the RLLR/LRRL rule
provides one that preserves Hermiticity (Section~\ref{sec:mnemonic}). Statement (S) is
classical at the level of $\Spin(9)\subset\SO(16)$ (Section~\ref{sec:Spin9}), and it is also
known that it can be realised inside octonionic $2\times2$ matrix calculus by nesting compatible
transformations \cite{ManogueSchray1993,ManogueDray1999,DrayManogue2015} (Section~\ref{sec:transvections}). What has not been
analysed, to our knowledge, is what (T) does for a \emph{general} bi-unitary octonionic matrix;
this is the subject of our main results (Section~\ref{sec:blocks}).

\bigskip
\noindent\textbf{Main results: quaternionic reduction and a $5+4$ splitting.}
What happens for a general $g\in\cL_\OO$ is described exactly by Lemma~\ref{lem:quatred} and
Theorem~\ref{thm:blocks}.
\begin{enumerate}
\item[(i)] \emph{Quaternionic reduction.} The bi-unitarity constraints force the four entries of
$g$ into a common quaternion subalgebra $\HH'\subset\OO$, so that
$\cL_\OO=\bigcup_{\HH'}\mathrm{Sp}(2)_{\HH'}$. Compatible matrices are the special case in
which the entries already lie in a complex subalgebra.
\item[(ii)] \emph{Classification of bracketings.} Requiring trace-free Hermitian output for all
$g$ leaves exactly four inequivalent bracketing rules (Proposition~\ref{prop:admissible}).
\item[(iii)] \emph{$5+4$ splitting.} Decomposing $\OO=\HH'\oplus\ell'\HH'$, every admissible
evaluation acts as the ordinary quaternionic $\Spin(5)$ rotation on $\RR\oplus\HH'\cong\RR^5$
and as left multiplication by $\overline{\Delta(g)}$ on $\ell'\HH'\cong\RR^4$, where $\Delta(g)$
is one of four non-commutative $2\times2$ determinants; for RLLR/LRRL, $\Delta(g)=ad-cb$ is the
column determinant.
\end{enumerate}
Hence the singular values are $1^{(5)}$ and $|\Delta(g)|^{(4)}$; the evaluation is an isometry if
and only if $|\Delta(g)|=1$, a contraction otherwise, and singular when $\Delta(g)=0$; and the
failure of compatibility with the matrix product is exactly the non-multiplicativity of the
column determinant on $\mathrm{Sp}(2)$. For the special form~\eqref{eq:gspecial} one finds
$|\Delta|^2=1-|ab-ba|^2$, so the defect is measured by the commutator of the first row. The four
rules, which Hermiticity cannot distinguish, are thus distinguished metrically, which gives the
RLLR/LRRL rule an intrinsic characterisation (Remark~\ref{rem:coldet}).

\bigskip
\noindent\textbf{The compatible case: transvections and $\Spin(9)$.}
For the bi-unitary matrices
\[
g_{\omega,\mu}=\begin{pmatrix}\cos\omega & -\sin\omega\,\mu\\ \sin\omega\,\bar\mu & \cos\omega\end{pmatrix},
\qquad \omega\in\RR,\ \mu\in\OO,\ |\mu|=1,
\]
whose entries lie in the complex subalgebra spanned by $1$ and $\mu$, all bracketings of
$(gX)g^\dagger$ coincide, and, writing $X=\begin{psmallmatrix}y&p\\\bar p&-y\end{psmallmatrix}$,
\begin{equation}
\label{eq:intro-rot}
(g_{\omega,\mu}X)g_{\omega,\mu}^\dagger=\begin{pmatrix}y'&p'\\\overline{p'}&-y'\end{pmatrix},\qquad
\begin{aligned}
y'&=y\cos2\omega-\sin2\omega\,\langle p,\mu\rangle,\\
p'&=p+\bigl(y\sin2\omega-2\sin^2\!\omega\,\langle p,\mu\rangle\bigr)\mu,
\end{aligned}
\end{equation}
the rotation by $2\omega$ in the plane spanned by the north pole and $\mu$. That such compatible
matrices act by rotations, and that nested transformations of this kind generate the whole
symmetry group, is known from the work of Manogue, Schray and Dray on octonionic Lorentz
transformations \cite{ManogueSchray1993,ManogueDray1999,DrayManogue2015}; the corresponding infinitesimal statement goes back to
Sudbery \cite{Sudbery1984}. What we add (Theorem~\ref{thm:transvection},
Corollary~\ref{coro:words}) is a short proof in the present bi-unitary setting, based on the
factorisation $T_{g_{\omega,\mu}}=\Phi(Y_{\omega,\mu})\Phi(X_N)$ into two Clifford reflections,
and the interpretation of $\rho(T_{g_{\omega,\mu}})$ as a transvection of the symmetric space
$\mathbb S^8$, whence the name. Together with the main results, this shows that composing
evaluations, rather than multiplying matrices, is unavoidable: for the product $g_1g_2$ of two
transvections, which is again bi-unitary, \emph{no} Hermiticity-preserving bracketing rule gives
an isometry (Proposition~\ref{prop:counter}).

\bigskip
\noindent\textbf{Relation to previous work.}
The $\Spin(9)$ geometry of the octonionic Hopf fibration and its equivariant description
are classical and have been extensively developed using Clifford systems, spin
representations and differential-geometric methods
\cite{Harvey,LawsonMichelsohn,OrneaPartonPiccinniVuletescu,PartonAxioms}; there octonions enter
only through the $16\times16$ operators $\Lm p$, whereas the present paper asks how much of this
symmetry can be expressed within octonionic $2\times2$ matrix calculus itself. The octonions
themselves are treated in detail in \cite{Baez,Schafer,Okubo,SpringerVeldkamp,ConwaySmith}.
The use of $2\times2$ octonionic Hermitian matrices for the vector representation goes back to
Sudbery's identification of $\mathfrak{sl}(2,\OO)$ with $\mathfrak{so}(9,1)$ \cite{Sudbery1984};
the finite transformations, unambiguous for compatible matrices and generating the whole
Lorentz group by nesting, were constructed by Manogue, Schray and Dray
\cite{ManogueSchray1993,ManogueDray1999,DrayManogue2015}. Our treatment of transvections
(Section~\ref{sec:transvections}) is the compact, Euclidean counterpart of that construction,
and we claim no priority for it. The results listed above under (i)--(iii) concern what lies
\emph{outside} the compatible class; to our knowledge they have not appeared in the literature
on octonionic matrices, where non-compatible matrices are avoided rather than analysed. A recent
paper \cite{gazeau26} by one of us develops a related mnemonic matrix rule for (split)
octonionic multiplication and the Cayley--Dickson tower; the relation between that rule and the
present one is left to future work (Remark~\ref{rem:operatorscope}).

\bigskip
\noindent\textbf{Organisation.}
Section~\ref{sec:algebras} fixes the algebraic setting. Section~\ref{sec:matrix} sets up the
common $2\times2$ model (the bi-unitary set $\cL_\DA$, the sphere $\Herm_2(\DA)_0$, the Hopf map
as image of the north pole). Section~\ref{sec:Spin9} gives a self-contained account of the
Clifford map and of $\Spin(9)$, the reference against which the matrix calculus is tested.
Section~\ref{sec:mnemonic} introduces the RLLR/LRRL rule and classifies the
Hermiticity-preserving bracketings. Section~\ref{sec:scope} treats the transvections and proves
the main results, with explicit examples. Section~\ref{sec:general} separates what holds for
general algebras with involution, including the split octonions, and Section~\ref{sec:conclusion}
concludes.

\section{Algebraic preliminaries}
\label{sec:algebras}

We work with two levels of generality: a weak one, sufficient for the Hermitian
structure of the evaluation, and a strong one (Euclidean composition algebras), needed
for metric statements.

\begin{defi}[Algebra with standard involution]
\label{def:invol}
An \emph{algebra with standard involution} is a finite-dimensional unital real algebra
$\DA$ equipped with an involutive anti-automorphism $x\mapsto\bar x$,
\[
\overline{xy}=\bar y\,\bar x,\qquad \overline{\bar x}=x,\qquad \overline{1}=1,
\]
such that $x+\bar x\in\RR1$ and $x\bar x=\bar x x\in\RR1$ for all $x$. We write
$\Re(x):=\tfrac12(x+\bar x)$ and $N(x):=x\bar x$, and we say that $\DA$ is
\emph{trace-associative} if, for all $x,y,z\in\DA$,
\begin{align}
\label{eq:real-assoc}
\Re\bigl((xy)z\bigr)&=\Re\bigl(x(yz)\bigr), \\
\label{eq:real-cyclic}
\Re\bigl((xy)z\bigr)&=\Re\bigl((yz)x\bigr)=\Re\bigl((zx)y\bigr).
\end{align}
\end{defi}

\begin{defi}[Euclidean composition algebra]
\label{def:normed}
A \emph{Euclidean composition algebra} is an algebra with standard involution for which
$N(x)=|x|^2$ is positive definite and multiplicative, $|xy|=|x|\,|y|$. The inner
product is $\langle x,y\rangle:=\Re(x\bar y)=\Re(\bar xy)$.
\end{defi}

\begin{rem}[What these definitions cover]
\label{rem:hurwitz}
By Hurwitz' theorem, the Euclidean composition algebras are, up to isomorphism, exactly
$\RR$, $\CC$, $\HH$ and $\OO$, of dimensions $1,2,4,8$ \cite{Schafer,SpringerVeldkamp}. Thus
Definition~\ref{def:normed} does \emph{not} provide additional generality beyond the four
normed division algebras; it only fixes the axioms we use.
By contrast, Definition~\ref{def:invol} is genuinely more general. It includes all
composition algebras, in particular the split octonions (where $\ell^2=+1$
in~\eqref{eq:CD} below), whose quadratic form $N$ is \emph{nondegenerate but indefinite},
of signature $(4,4)$. 
\end{rem}

The Cayley--Dickson construction provides a uniform description.
Viewing $\OO$ as pairs of quaternions $(q,p)\in\HH\oplus\ell\,\HH$,
the multiplication reads
\begin{equation}
\label{eq:CD}
(q_1+\ell p_1)(q_2+\ell p_2)
= q_1 q_2 + \ell^2\,p_2 p_1\st \;+\;\ell\,(q_2 p_1+q_1\st p_2),
\end{equation}
where $\st$ denotes quaternionic conjugation and $\ell^2=-1$ for the division algebra
(the split octonions correspond to $\ell^2=+1$; see \cite{gazeau26,gazpejtod26}).
Here $\HH$ has its usual basis $1,\ii,\mathrm j,\mathrm k$, with
$\ii^2=\mathrm j^2=\mathrm k^2=\ii\mathrm j\mathrm k=-1$, and $\ell$ denotes the element $(0,1)$,
a unit orthogonal to $\HH$ with $\ell^2=-1$. By~\eqref{eq:CD}, the product of $\ell$ and
$q\in\HH$ is $\ell q=(0,q)$, while $q\ell=\ell\bar q$; thus $\OO$ has the orthonormal basis
$$
1,\ \ii,\ \mathrm j,\ \mathrm k,\ \ell,\ \ell\ii,\ \ell\mathrm j,\ \ell\mathrm k,
$$
and, for instance, $\ell\ii$ is the element $(0,\ii)$.
This is the convention used in all explicit examples below.

\begin{defi}[Associativity type]
\label{def:assoctype}
With the associator $[x,y,z]:=(xy)z-x(yz)$, we say that $\DA$ is
\emph{associative} if $[x,y,z]=0$ identically, and \emph{alternative} if $[x,y,z]$ is an
alternating function of its arguments, equivalently $[x,x,y]=[y,x,x]=0$.
Among the normed division algebras, $\RR,\CC,\HH$ are associative and $\OO$ is
alternative but not associative. Alternativity implies flexibility, $[x,y,x]=0$, and,
by Artin's theorem, the subalgebra generated by any two elements is associative.
\end{defi}

\begin{lem}[Real part identities]
\label{lem:realpart}
Let $\DA$ be an alternative algebra with standard involution (for instance $\OO$, or the
split octonions). Then $\DA$ is trace-associative, i.e.\
\eqref{eq:real-assoc}--\eqref{eq:real-cyclic} hold. Moreover, in any algebra with standard
involution,
\begin{equation}
\label{eq:conj-swap}
\overline{(xy)z}=\bar z\,(\bar y\,\bar x),
\qquad
\overline{x(yz)}=(\bar z\,\bar y)\,\bar x,
\end{equation}
that is, conjugation reverses the order of factors and swaps the bracketing type.
\end{lem}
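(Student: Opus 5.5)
The plan has three parts: first the conjugation identities \eqref{eq:conj-swap}, which hold in any algebra with standard involution, and then the two trace-associativity identities, which need alternativity.

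\textbf{Conjugation identities.} We only use that $x\mapsto\bar x$ is an anti-automorphism. Applying $\overline{uv}=\bar v\,\bar u$ first with $u=xy$, $v=z$ gives
\[
\overline{(xy)z}=\bar z\,\overline{xy}.
\]
Applying it again to $\overline{xy}$ gives $\bar z\,(\bar y\,\bar x)$. The same two steps, now with $u=x$, $v=yz$, give $\overline{x(yz)}=(\bar z\,\bar y)\,\bar x$. No associativity is used anywhere.

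\textbf{Trace-associativity \eqref{eq:real-assoc}.} It suffices to show $\Re[x,y,z]=0$. Write $t(x)=x+\bar x=2\Re(x)$; it is real-linear. Linearising $x\bar x\in\RR1$ shows that $x\bar y+y\bar x\in\RR$. Using $\bar y=t(y)-y$, every element satisfies the quadratic relation
\[
y^2-t(y)\,y+N(y)=0 .
\]
The cleanest route goes through the associator.
\begin{enumerate}
\item In an alternative algebra the associator is alternating and vanishes whenever an argument is real. Hence $[x,y,z]=-[\bar x,y,z]$, and likewise in each slot, since $\bar x=t(x)-x$.
\item Conjugating with \eqref{eq:conj-swap} gives
\[
\overline{[x,y,z]}=\bar z(\bar y\bar x)-(\bar z\bar y)\bar x=-[\bar z,\bar y,\bar x].
\]
\item By step 1 this equals $[z,y,x]$ up to the sign $(-1)^3$, so $\overline{[x,y,z]}=[z,y,x]$.
\item Since the associator is alternating, $[z,y,x]=-[x,y,z]$.
\end{enumerate}
Thus $\overline{[x,y,z]}=-[x,y,z]$, i.e.\ $\Re[x,y,z]=0$. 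The step needing care is step 1, checking that replacing each argument by its conjugate flips the sign; it does, because real multiples of $1$ lie in the nucleus.

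\textbf{Cyclicity \eqref{eq:real-cyclic}.} First I prove $\Re(xy)=\Re(yx)$. Real linearity of $t$ gives $t(xy)-t(yx)=t([x,y])$, so it is enough to show $t([x,y])=0$. By \eqref{eq:conj-swap} in its two-factor form, $\overline{[x,y]}=[\bar y,\bar x]$. Writing $\bar x=t(x)-x$ and $\bar y=t(y)-y$, and noting that real scalars commute with everything, this equals $[y,x]=-[x,y]$. Hence the commutator is purely imaginary.

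Then
\[
\Re\bigl((xy)z\bigr)=\Re\bigl(z(xy)\bigr)=\Re\bigl((zx)y\bigr),
\]
where the last equality is \eqref{eq:real-assoc}. Applying this once more gives $\Re\bigl((zx)y\bigr)=\Re\bigl((yz)x\bigr)$, which completes the cycle.
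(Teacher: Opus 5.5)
Your proof is correct and is essentially the paper's argument. It uses the same double application of $\overline{uv}=\bar v\,\bar u$, the same associator computation $\overline{[x,y,z]}=-[\bar z,\bar y,\bar x]=-[x,y,z]$ via $\bar w=2\Re(w)-w$ and alternation, and the same derivation of cyclicity from $\Re(uv)=\Re(vu)$ combined with \eqref{eq:real-assoc}. The only cosmetic difference is that you obtain $\Re(uv)=\Re(vu)$ by showing $\overline{[u,v]}=-[u,v]$, whereas the paper splits $u,v$ into real and imaginary parts; the two arguments are equivalent.
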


\begin{proof}
Equation~\eqref{eq:conj-swap} follows from applying $\overline{uv}=\bar v\,\bar u$ twice.
It gives $\overline{[x,y,z]}=\bar z(\bar y\bar x)-(\bar z\bar y)\bar x=-[\bar z,\bar y,\bar x]$.
Since $\bar w=2\Re(w)-w$ and the associator vanishes when one argument is real, alternativity
gives $[\bar z,\bar y,\bar x]=-[z,y,x]=[x,y,z]$, hence $\overline{[x,y,z]}=-[x,y,z]$ and
$\Re[x,y,z]=0$, which is~\eqref{eq:real-assoc}. For~\eqref{eq:real-cyclic}, the identity
$\Re(uv)=\Re(vu)$ (write $u=s+u'$, $v=t+v'$ with $s,t$ real and $u',v'$ imaginary; then
$\Re(uv)=st+\Re(u'v')$, and $\Re(u'v')=\tfrac12(u'v'+v'u')$ is symmetric) gives $\Re((xy)z)=\Re(z(xy))$, which equals $\Re((zx)y)$
by~\eqref{eq:real-assoc}; iterating gives the cyclic invariance. See also
\cite[Ch.~III]{Schafer}, \cite[\S1]{Okubo}, \cite[\S2]{Baez}.
\end{proof}
Since composition algebras are alternative, Lemma~\ref{lem:realpart} applies to all of them. We have also
checked numerically that~\eqref{eq:real-assoc}--\eqref{eq:real-cyclic} hold in the
sixteen-dimensional sedenions, which are neither alternative nor composition algebras;
we do not use this fact.
We shall also use left multiplications $\Lm x:\OO\to\OO$, $\Lm x(y)=xy$, regarded as
real-linear operators on $\OO\cong\RR^8$ with its Euclidean structure.

\begin{lem}[Left multiplications in $\OO$]
\label{lem:Lx}
For all $x,y\in\OO$ and every unit $\mu\in\OO$:
\begin{enumerate}
\item $\Lm x^{\mathrm t}=\Lm{\bar x}$ (adjoint with respect to $\langle\cdot,\cdot\rangle$);
\item $\Lm x\Lm{\bar y}+\Lm y\Lm{\bar x}=2\langle x,y\rangle\,\bu_8$; in particular
$\Lm x\Lm{\bar x}=|x|^2\bu_8$;
\item $\mu\,x\,\mu=2\langle\mu,\bar x\rangle\,\mu-\bar x$, the left-hand side being
unambiguous by flexibility.
\end{enumerate}
\end{lem}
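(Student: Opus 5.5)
We prove the three items in order, since each relies on the previous one. The basic tools are the inner product $\langle x,y\rangle=\Re(x\bar y)$ from Definition~\ref{def:normed} and the trace identities of Lemma~\ref{lem:realpart}, together with the alternative laws and Artin's theorem (Definition~\ref{def:assoctype}).

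\emph{Item 1.} We must show $\langle xy,z\rangle=\langle y,\bar x z\rangle$ for all $x,y,z\in\OO$. By definition, $\langle xy,z\rangle=\Re\bigl((xy)\bar z\bigr)$. Cyclic invariance~\eqref{eq:real-cyclic} turns this into $\Re\bigl((\bar z x)y\bigr)$. Since $\Re(w)=\Re(\bar w)$, and conjugation reverses products by~\eqref{eq:conj-swap}, this equals $\Re\bigl(\bar y(\bar x z)\bigr)=\langle y,\bar x z\rangle$. The only point needing care is bookkeeping of which bracketing is used at each step. Real-part associativity~\eqref{eq:real-assoc} removes any residual ambiguity, so the step is safe.

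\emph{Item 2.} The identity is linear in $x$ and in $y$ jointly, so by polarisation it suffices to prove $\Lm x\Lm{\bar x}=|x|^2\bu_8$. That is, we need $x(\bar x z)=|x|^2 z$ for every $z$. Write $\bar x=2\Re(x)-x$. Then $x(\bar x z)=2\Re(x)\,xz-x(xz)$. The left alternative law gives $x(xz)=x^2z$, and
\[
x^2=2\Re(x)\,x-x\bar x=2\Re(x)\,x-|x|^2 .
\]
Substituting, the terms $2\Re(x)\,xz$ cancel and we obtain $x(\bar x z)=|x|^2z$. Finally, apply this identity to $x+y$ in place of $x$ and subtract the identities for $x$ and for $y$. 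Using $|x+y|^2=|x|^2+|y|^2+2\langle x,y\rangle$, this leaves $\Lm x\Lm{\bar y}+\Lm y\Lm{\bar x}=2\langle x,y\rangle\bu_8$.

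\emph{Item 3.} The elements $\mu$ and $x$ generate an associative subalgebra by Artin's theorem, so we may compute as in $\HH$. Start from $\mu x+\bar x\bar\mu=2\Re(\mu x)$ and multiply on the right by $\mu$. This gives
\[
\mu x\mu=2\Re(\mu x)\,\mu-\bar x\,\bar\mu\mu=2\Re(\mu x)\,\mu-\bar x,
\]
using $|\mu|=1$. It remains to note that $\Re(\mu x)=\langle\mu,\bar x\rangle$, since $\langle\mu,\bar x\rangle=\Re(\mu\,\overline{\bar x})=\Re(\mu x)$.

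We expect no step to be a real obstacle. The most delicate point is item~2: there the alternative law must be used in place of associativity, and polarisation is what extends the diagonal identity to the mixed one.
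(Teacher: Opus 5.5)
Your proof is correct and follows the paper's argument essentially step by step: trace cyclicity and conjugation for item 1, the left alternative law followed by polarisation for item 2, and the relation $w+\bar w=2\Re(w)$ for item 3. The only difference is cosmetic and occurs in item 3. You multiply $\mu x+\bar x\bar\mu=2\Re(\mu x)$ on the right by $\mu$ and simplify $(\bar x\bar\mu)\mu=\bar x$ via Artin's theorem, whereas the paper multiplies $x\mu=2\langle x,\bar\mu\rangle-\bar\mu\bar x$ on the left by $\mu$ and simplifies $\mu(\bar\mu\bar x)=\bar x$ using item 2.
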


\begin{proof}
(1) $\langle xy,z\rangle=\Re((xy)\bar z)=\Re(y(\bar z x))=\Re(y\,\overline{\bar x z})
=\langle y,\bar xz\rangle$, by~\eqref{eq:real-assoc}--\eqref{eq:real-cyclic}.
(2) By left alternativity, $x(\bar x z)=x((2\Re x-x)z)=2\Re(x)\,xz-(xx)z=(2\Re(x)x-x^2)z=|x|^2z$;
polarising in $x$ gives the claim.
(3) Polarising $u\bar u=|u|^2$ gives $u\bar v+v\bar u=2\langle u,v\rangle$; with $u=x$,
$v=\bar\mu$ this reads $x\mu=2\langle x,\bar\mu\rangle-\bar\mu\bar x$, so
$\mu(x\mu)=2\langle x,\bar\mu\rangle\mu-\mu(\bar\mu\bar x)=2\langle \mu,\bar x\rangle\mu-\bar x$
by (2).
\end{proof}

\section{The $2\times2$ Matrix Framework for Hopf Fibrations}
\label{sec:matrix}

\subsection{Bi-unitary matrices}
\label{sec:biunitary}

\begin{defi}[Bi-unitary set $\cL_\DA$]
\label{def:Ldef}
For an algebra with standard involution $\DA$, define
\begin{equation}
\label{eq:Ldef}
\cL_\DA
=
\left\{
g=\begin{pmatrix}a & b\\ c & d\end{pmatrix}
:\ a,b,c,d\in\DA,\quad gg^\dagger=g^\dagger g=\buTwo
\right\},
\qquad g^\dagger:=\overline{{}^t g}.
\end{equation}
\end{defi}

Unwinding~\eqref{eq:Ldef} yields the constraints
\begin{align}
\label{eq:constraints1}
N(a)+N(b)=1,\quad N(c)+N(d)=1,\quad a\bar c+b\bar d=0= c\bar a+d\bar b, \\
\label{eq:constraints2}
N(a)+N(c)=1,\quad N(b)+N(d)=1,\quad \bar ab+\bar cd=0=\bar ba+\bar dc.
\end{align}

\begin{rem}[Why Definition~\ref{def:Ldef} needs no bracketing convention]
\label{rem:eq5safe}
The matrix products $gg^\dagger$ and $g^\dagger g$ in~\eqref{eq:Ldef} involve only
\emph{binary} products: each entry is a sum of two ordinary products, e.g.\
$(gg^\dagger)_{11}=a\bar a+b\bar b$. Hence $\cL_\DA$ is defined unambiguously for every
$\DA$, associative or not. Likewise, the product $gh$ of two octonionic matrices is itself
unambiguous, since each entry involves only binary octonionic products. Non-associativity
becomes relevant when such binary expressions are \emph{composed}: in triple products such
as $(gX)g^\dagger$ (Section~\ref{sec:mnemonic}), when iterating matrix products, and when comparing
octonionic matrix multiplication with composition of the corresponding real-linear operators
$T_g\circ T_h$ (Section~\ref{sec:scope}).
\end{rem}

\begin{lem}[Equal norms]
\label{lem:equalnorms}
For any $g\in\cL_\DA$ one has $N(a)=N(d)$ and $N(b)=N(c)$. If $\DA$ is a Euclidean
composition algebra there exists $\omega\in[0,\pi/2]$ with
\begin{equation}
\label{eq:omega}
|a|=|d|=\cos\omega,\qquad |b|=|c|=\sin\omega.
\end{equation}
\end{lem}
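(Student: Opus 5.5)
The proof only uses the diagonal (norm) entries of the two identities $gg^\dagger=\buTwo$ and $g^\dagger g=\buTwo$, as recorded in \eqref{eq:constraints1}--\eqref{eq:constraints2}. These entries involve only binary products, so by Remark~\ref{rem:eq5safe} no associativity is needed.

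First, I would subtract the relations of \eqref{eq:constraints1} from those of \eqref{eq:constraints2}. From $N(a)+N(b)=1$ and $N(a)+N(c)=1$ we get $N(b)=N(c)$. From $N(a)+N(c)=1$ and $N(c)+N(d)=1$ we get $N(a)=N(d)$. These are equalities in $\RR1$: in an algebra with standard involution each $N(x)=x\bar x$ is a real multiple of $1$, so subtraction is legitimate. This gives the first assertion for every $\DA$.

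Second, assume $\DA$ is a Euclidean composition algebra, so that $N(x)=|x|^2\ge0$. Then $|a|^2+|b|^2=1$ with both terms nonnegative, hence $|a|,|b|\in[0,1]$. Set $\omega:=\arccos|a|\in[0,\pi/2]$, so that $|a|=\cos\omega$. Then $|b|^2=1-\cos^2\omega=\sin^2\omega$, and since $|b|\ge0$ and $\sin\omega\ge0$ on $[0,\pi/2]$, we get $|b|=\sin\omega$. The first step then transfers these values to $|d|=|a|$ and $|c|=|b|$, which is \eqref{eq:omega}.

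There is no genuine obstacle. The only point to watch is the sign choice in the square roots, and restricting $\omega$ to $[0,\pi/2]$ settles it. The off-diagonal constraints are not needed here.
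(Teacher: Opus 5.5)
Your proof is correct and is the paper's argument: subtract the diagonal constraints $N(a)+N(b)=1$, $N(a)+N(c)=1$, $N(c)+N(d)=1$ to get $N(b)=N(c)$ and $N(a)=N(d)$. Your explicit choice $\omega=\arccos|a|\in[0,\pi/2]$, with the sign check for $|b|=\sin\omega$, just spells out the routine step the paper leaves implicit.
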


\begin{proof}
Subtract the relations $N(a)+N(b)=1$ and $N(a)+N(c)=1$, and similarly for $d$.
\end{proof}

\begin{rem}[Status of $\cL_\DA$ as a group]
\label{rem:Lgroup}
In the associative cases $\DA\in\{\RR,\CC,\HH\}$, $\cL_\DA$ is closed under matrix
multiplication and one recovers the classical compact groups
\begin{equation}
\label{eq:Lassoc}
\cL_\RR\simeq \mathrm O(2),\qquad \cL_\CC\simeq \mathrm U(2),\qquad \cL_\HH\simeq \mathrm{Sp}(2)\simeq\mathrm{USp}(4)\simeq\Spin(5),
\end{equation}
the last identification being the classical rank-$2$ coincidence $B_2=C_2$
($\dim\mathrm{Sp}(2)=2(2\cdot2+1)=10=\binom{5}{2}=\dim\Spin(5)$).\footnote{Here $\mathrm{Sp}(2)$
denotes the \emph{compact} group of $2\times2$ quaternionic bi-unitary matrices (real
dimension $10$), i.e.\ the quaternionic-rank convention $\mathrm{Sp}(n)=\{g\in
\mathrm{M}_n(\HH):gg^\dagger=g^\dagger g=\bu_n\}$. In the complex-dimension convention, indexed by
the dimension of the underlying complex symplectic space ($\HH^2\cong\CC^4$), the same
group is written $\mathrm{USp}(4)$. Throughout this paper $\mathrm{Sp}(2)$ always
means the former group, and must not be confused with the \emph{non-compact} groups
$\mathrm{Sp}(2,\RR)\cong \mathrm{SL}(2,\RR)$ or $\mathrm{Sp}(2,\CC)$
(dimensions $3$ and $6$ respectively).}
For $\DA=\OO$, $\cL_\OO$ is \emph{not} closed under the octonionic matrix product. We shall
see (Lemma~\ref{lem:quatred}) that $\cL_\OO$ is the union of the copies
$\mathrm{Sp}(2)_{\HH'}$ of $\mathrm{Sp}(2)$ attached to the quaternion subalgebras
$\HH'\subset\OO$; a product of elements taken from different copies generally leaves $\cL_\OO$.
For instance, with $g_0$ as in~\eqref{eq:g0} below, $g_0\,\mathrm{diag}(1,\mathrm j)=
\begin{psmallmatrix}\frac{1+\ii}2&\frac{\ell\mathrm j}{\sqrt2}\\ \frac{\ell\ii}{\sqrt2}&\frac{\mathrm j-\mathrm k}2\end{psmallmatrix}$
has $a\bar c+b\bar d=-\ell\ii/\sqrt2\neq0$. Even when a product stays in $\cL_\OO$, the induced
transformations need not compose accordingly (Proposition~\ref{prop:counter}(c)).
\end{rem}

\begin{rem}[Why $\RR$, and only $\RR$, disconnects]
\label{rem:connectedness}
The three groups in \eqref{eq:Lassoc} do not behave uniformly with respect to
connectedness. For $\DA=\RR$, conjugation is trivial and the bi-unitary condition reads
$g{}^\mathrm{t}g={}^\mathrm{t}g\,g=\buTwo$, so $\det(g)=\pm1$; both signs occur and
$\mathrm O(2)$ has two components. For $\DA=\CC$, $\det(g)$ ranges over the connected
group $\mathrm U(1)$, and $\mathrm U(2)$ is connected; for $\HH$, the Dieudonn\'e
determinant $\mathrm{Ddet} g=\vert a\vert \vert d-b a^{-1}c\vert$ ($a\neq 0$) takes values in $\RR_{>0}$, so the bi-unitary constraint forces it to be $1$,
and $\mathrm{Sp}(2)$ is connected, indeed simply connected, matching $\Spin(5)$.
In short, $\mathrm O(2)$ is disconnected only because
$\RR^\times\simeq\{\pm1\}\times\RR_{>0}$ is. None of this is new: the non-commutative
determinant and the connectedness of $\mathrm{Sp}(n)$ go back to \cite{Dieudonne1943}, with
the classical groups and their topology treated in \cite{Dieudonne1955} and, from the
Lie-theoretic viewpoint (including $\mathrm{Sp}(2)\simeq\Spin(5)$), in
\cite{Helgason1978}; see also \cite{Aslaksen1996}.
\end{rem}

\subsection{The Hermitian sphere model}
\label{sec:Herm2}

\begin{defi}[Hermitian matrices and sphere model]
\label{def:Herm2}
For an algebra with standard involution $\DA$, define
$$
\Herm_2(\DA)
=
\left\{\begin{pmatrix} r&x\\\bar x&s\end{pmatrix}:\ r, s\in\RR,\ x\in\DA\right\}
\cong\RR^{2+\dim_\RR\DA},
$$
and its trace-free slice
$$
\Herm_2(\DA)_0
=
\left\{X(y,p):=\begin{pmatrix}y&p\\\bar p&-y\end{pmatrix}:\ y\in\RR,\ p\in\DA\right\}
\cong\RR^{1+\dim_\RR\DA}.
$$
One has $X^2= (y^2+N(p))\buTwo$. We set $Q(X(y,p))=y^2+N(p)$, with polar form
\[\Bf\bigl(X(y,p),X(y',p')\bigr)=yy'+\langle p,p'\rangle.\] If $\DA$ is a Euclidean
composition algebra, $Q$ is positive definite and
$$
\mathbb S^{\dim_\RR\DA}
=\{(y,p)\in\RR\oplus\DA:\ y^2+|p|^2=1\}
$$
is the unit sphere of $\Herm_2(\DA)_0$. The north pole is $X_N=X(1,0)$.
\end{defi}

\begin{table}[h]
\centering
\small\renewcommand{\arraystretch}{1.3}
\begin{tabular}{ccccc}
\hline
$\DA$ & $\dim_\RR\DA$ & sphere & $\cL_\DA$ & group acting on the sphere \\
\hline
$\RR$ & $1$ & $\mathbb S^1$ & $\mathrm{O}(2)$ (group) & $\mathrm O(2)/\{\pm\buTwo\}\cong\mathrm O(2)$ \\
$\CC$ & $2$ & $\mathbb S^2$ & $\mathrm{U}(2)$ (group) & $\SO(3)\cong\mathrm{U}(2)/\mathrm U(1)$ \\
$\HH$ & $4$ & $\mathbb S^4$ & $\mathrm{Sp}(2)\simeq\Spin(5)$ (group) & $\SO(5)\cong\Spin(5)/\{\pm1\}$ \\
$\OO$ & $8$ & $\mathbb S^8$ & $\bigcup_{\HH'}\mathrm{Sp}(2)_{\HH'}$ (set) & $\SO(9)\cong\Spin(9)/\{\pm1\}$, Section~\ref{sec:Spin9} \\
\hline
\end{tabular}
\caption{The four normed division algebra cases in the $2\times2$ matrix framework. In the
first three rows the action is $X\mapsto gXg^\dagger$; in the octonionic row $\Spin(9)$
is \emph{not} $\cL_\OO$, but is generated by the operators $T_g$ attached to the
transvections $g\in\cT\subset\cL_\OO$ (Theorem~\ref{thm:transvection}).}
\label{tab:cases}
\end{table}

\subsection{The Hopf map as the image of the north pole}
\label{sec:hopfmatrix}

At the north pole all bracketings coincide, since $X_N$ has real entries. This makes
the following statement valid uniformly, without any convention.

\begin{prop}[Matrix model of the Hopf map]
\label{prop:hopfmatrix}
Let $\DA\in\{\RR,\CC,\HH,\OO\}$ and define
\begin{equation}
\label{eq:pi}
\pi:\ \mathbb S^{2\dim_\RR\DA-1}\subset\DA^2\to\mathbb S^{\dim_\RR\DA},\qquad
\pi(u,v)=\bigl(|u|^2-|v|^2,\;2u\bar v\bigr).
\end{equation}
For every $g=\begin{psmallmatrix}a&b\\c&d\end{psmallmatrix}\in\cL_\DA$, the matrix $(gX_N)g^\dagger$
is unambiguous, belongs to $\Herm_2(\DA)_0$, and
\begin{equation}
\label{eq:hopfcol}
(gX_N)g^\dagger = X\bigl(\pi(a,c)\bigr),
\end{equation}
where $(a,c)=g\binom10\in\mathbb S^{2\dim_\RR\DA-1}$ is the first column of $g$. The
map $\pi$ is the Hopf fibration: it is surjective and each fiber is a sphere
$\mathbb S^{\dim_\RR\DA-1}$.
\end{prop}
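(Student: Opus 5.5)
First compute $gX_N$ directly. Since $X_N=\mathrm{diag}(1,-1)$ has real entries, $gX_N=\begin{psmallmatrix}a&-b\\c&-d\end{psmallmatrix}$. Real scalars lie in the nucleus and in the centre of $\DA$, so every triple product $g_{ik}(X_N)_{kj}\bar g_{lj}$ reduces to a binary product times $\pm1$. This holds whichever bracketing is chosen, and it shows that $(gX_N)g^\dagger$ is unambiguous. The entries are
\[
(gX_N)g^\dagger=\begin{pmatrix} N(a)-N(b) & a\bar c-b\bar d\\ c\bar a-d\bar b & N(c)-N(d)\end{pmatrix}.
\]
Now I will use the constraints \eqref{eq:constraints1}. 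The relation $a\bar c+b\bar d=0$ gives $a\bar c-b\bar d=2a\bar c$, and similarly $c\bar a-d\bar b=2c\bar a=\overline{2a\bar c}$. Lemma~\ref{lem:equalnorms} gives $N(b)=N(c)$ and $N(d)=N(a)$. The diagonal is therefore $\bigl(N(a)-N(c),\,-(N(a)-N(c))\bigr)$. The result is $X(|a|^2-|c|^2,\,2a\bar c)=X(\pi(a,c))$, which lies in $\Herm_2(\DA)_0$. The relation $N(a)+N(c)=1$ from \eqref{eq:constraints2} shows that $(a,c)$ lies in the unit sphere of $\DA^2$. One also checks that $\pi$ maps into $\mathbb S^{\dim\DA}$: $(|a|^2-|c|^2)^2+4|a|^2|c|^2=(|a|^2+|c|^2)^2=1$, using $|a\bar c|=|a||c|$.

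For surjectivity, take a point $(y,p)$ with $y^2+|p|^2=1$, and write $y=\cos2\omega$ with $\omega\in[0,\pi/2]$. If $p\neq0$, put $\mu=p/|p|$, so that $|p|=\sin2\omega$. Then $(u,v)=(\cos\omega,\ \sin\omega\,\bar\mu)$ satisfies $2u\bar v=2\cos\omega\sin\omega\,\mu=p$. If $p=0$, take $(1,0)$ or $(0,1)$. I also want to show that every unit $(u,v)$ really is the first column of some $g\in\cL_\DA$, so that \eqref{eq:hopfcol} covers the whole sphere. My first candidate is the completion $g=\begin{psmallmatrix}u&-\bar v\\ v&\bar u\end{psmallmatrix}$ when $u,v$ commute. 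In general, however, I will complete using the subalgebra generated by $u,v$, which is associative by Artin's theorem (Definition~\ref{def:assoctype}) and sits inside a quaternion subalgebra. There the classical $\mathrm{Sp}(2)$ statement applies: for $u\neq0$ take $g=\begin{psmallmatrix}u&-\bar v\,\bar u^{-1}\,|u|\\ v&|u|\end{psmallmatrix}$ (suitably normalised), and verify the four binary constraints inside that associative subalgebra. This is routine.

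The fiber is the main point. I will show that $\pi(u,v)=\pi(u',v')$ if and only if $(u',v')=(u\lambda,v\lambda)$ for some unit $\lambda$. Over $\OO$ this must be understood with care, as the introduction warns. Suppose first $u\neq0$. Then $|u|^2=|u'|^2$ follows from the first component together with the sphere condition. Given $|u|=|u'|$, the equation $u\bar v=u'\bar v'$ determines $\bar v'=u'^{-1}(u\bar v)$; this only needs the subalgebra generated by two elements. So for fixed $(u,v)$ the fiber is parametrised by $u'$ in the sphere of radius $|u|$, i.e.\ by $\mathbb S^{\dim\DA-1}$, and the map $u'\mapsto(u',\overline{u'^{-1}(u\bar v)})$ is a continuous bijection onto the fiber, hence a homeomorphism by compactness. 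If $u=0$, the fiber is $\{(0,v'):|v'|=1\}$, again a sphere. Using $u'$ as the parameter avoids the non-associative ambiguity in ``$(u\lambda,v\lambda)$'', so I expect the only delicate step to be checking that every unit pair is a first column of a bi-unitary $g$.
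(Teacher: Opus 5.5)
Your proof is correct and is essentially the paper's: the same entrywise evaluation of $(gX_N)g^\dagger$ via \eqref{eq:constraints1}--\eqref{eq:constraints2} and Lemma~\ref{lem:equalnorms}, and the same description of the fiber through a point with $u\neq0$ as the graph of $u'\mapsto\overline{u'^{-1}(u\bar v)}$ over a sphere of radius $|u|$ (the paper's $v=\overline{u^{-1}(p/2)}$), with $u=0$ handled separately. Only your optional side remarks need correcting: the completion should be $b=-u\bar v/|u|$, $d=|u|$, since your $b=-\bar v\,\bar u^{-1}|u|=-\bar v u/|u|$ gives $a\bar c+b\bar d=u\bar v-\bar v u\neq0$ in general; and the announced description of fibers as $\{(u\lambda,v\lambda)\}$ is false over $\OO$ (for $u=\ii/\sqrt2$, $v=\mathrm j/\sqrt2$, $\lambda=\ell$ one gets $(u\lambda)\overline{(v\lambda)}=\mathrm k/2\neq u\bar v=-\mathrm k/2$), so you were right not to rely on it.
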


\begin{proof}
Since $X_N=\mathrm{diag}(1,-1)$ has real entries, every entry of $(gX_N)g^\dagger$ is a
sum of binary products: $(gX_N)g^\dagger=\begin{psmallmatrix}|a|^2-|b|^2 & a\bar c-b\bar d\\
c\bar a-d\bar b & |c|^2-|d|^2\end{psmallmatrix}$. By Lemma~\ref{lem:equalnorms},
$|a|^2-|b|^2=|a|^2-|c|^2=-(|c|^2-|d|^2)$, and by~\eqref{eq:constraints1}, $a\bar c-b\bar d=2a\bar c$
and $c\bar a-d\bar b=2c\bar a=\overline{2a\bar c}$. This is~\eqref{eq:hopfcol}, and
$|a|^2+|c|^2=1$ by~\eqref{eq:constraints2}.
For the fibers: $|\pi(u,v)|^2=(|u|^2-|v|^2)^2+4|u|^2|v|^2=1$. Given $(y,p)\in\mathbb S^{\dim\DA}$
with $y\neq-1$, the equations $|u|^2=\frac{1+y}2>0$, $u\bar v=\frac p2$ are solved by
$u$ on the sphere of radius $\sqrt{(1+y)/2}$ and $v=\overline{u^{-1}(p/2)}
=\overline{\bar u\,p}/(1+y)=\bar p\,u/(1+y)$, uniquely determined by $u$ (alternativity is
used in $u^{-1}(u\bar v)=\bar v$). Thus the fiber is a sphere of dimension $\dim_\RR\DA-1$;
over $y=-1$ it is $\{(0,v):|v|=1\}$. Surjectivity is clear.
\end{proof}

Every point of $\mathbb S^{2\dim\DA-1}$ is the first column of some $g\in\cL_\DA$
(Theorem~\ref{thm:transvection} gives an explicit choice in the octonionic case;
the special form below gives another), so~\eqref{eq:hopfcol} realises the whole fibration.

\paragraph{The special form.} A convenient family is
\begin{equation}
\label{eq:gspecial}
g=\begin{pmatrix}a & b\\ c & \bar a\end{pmatrix},\qquad
c=-\bar\alpha\,\bar b\,\alpha,\quad \alpha=a/|a|\qquad (a\neq0),
\end{equation}
where $\bar\alpha\bar b\alpha$ is unambiguous by flexibility. One checks directly from
alternativity that~\eqref{eq:gspecial} belongs to $\cL_\DA$ for every $(a,b)\in\mathbb S^{2\dim\DA-1}$
with $a\neq0$ (for $\DA=\HH$ this is the matrix $\begin{psmallmatrix}a&b\\-\overline{b_\alpha}&\bar a\end{psmallmatrix}$,
$b_\alpha=\bar a b a/|a|^2$). For it, $a\bar c=-b a$, so~\eqref{eq:hopfcol} reads
\begin{equation}
\label{eq:hopfgeneral}
y=|a|^2-|b|^2,\qquad p=-2ba .
\end{equation}
\emph{The case $a=0$.} Then $|b|=|c|=1$ and $d=0$, the constraints hold for every unit $c$,
and~\eqref{eq:hopfcol} gives the south pole $X(-1,0)$, which is also the continuous
extension of~\eqref{eq:hopfgeneral} to $a=0$. Thus the map
$(a,b)\mapsto(|a|^2-|b|^2,-2ba)$ is defined on all of $\mathbb S^{2\dim\DA-1}$; it equals
$\kappa\circ\pi\circ\sigma$ with the isometries $\sigma(a,b)=(\bar a,-b)$ of
$\mathbb S^{2\dim\DA-1}$ and $\kappa(y,p)=(y,\bar p)$ of $\mathbb S^{\dim\DA}$, since
$2\bar a\,\overline{(-b)}=-2\bar a\bar b=\overline{-2ba}$. It is therefore a Hopf fibration,
equivalent to~\eqref{eq:pi}.

\paragraph{(i) Quaternionic case ($\mathbb S^7\to\mathbb S^4$).}
Here $\cL_\HH\simeq\mathrm{Sp}(2)$, the adjoint action $X\mapsto gXg^\dagger$ on
$\Herm_2(\HH)_0\cong\RR^5$ is unambiguous and is the standard action of
$\Spin(5)\simeq\mathrm{Sp}(2)$ on $\mathbb S^4$ through $\SO(5)$. The fiber over each point
of $\mathbb S^4$ is $\mathbb S^3$, while the Hopf inverse of an equatorial
$\mathbb S^3\subset\mathbb S^4$ is $\mathbb S^3\times\mathbb S^3\cong\Spin(4)$
(Lemma~\ref{lem:fibers}). See also Remark \ref{rem:fibersgroups}.

\paragraph{(ii) Octonionic case ($\mathbb S^{15}\to\mathbb S^8$).}
Proposition~\ref{prop:hopfmatrix} applies verbatim, with $\pi(u,v)=(|u|^2-|v|^2,2u\bar v)$
and $y^2+|p|^2=(|u|^2+|v|^2)^2=1$ on $\mathbb S^{15}$; in matrix form
\[
\pi(u,v)\longleftrightarrow
X(y,p)=\begin{pmatrix}|u|^2-|v|^2 & 2u\bar v\\ 2v\bar u & -(|u|^2-|v|^2)\end{pmatrix}
\in\Herm_2(\OO)_0 .
\]
This is statement (H) of the Introduction. It uses nothing beyond binary products, and in
particular it does not depend on the bracketing rule of Section~\ref{sec:mnemonic}.

\begin{lem}[Inverse images of equators]
\label{lem:fibers}
Let $\pi:\mathbb S^{2n-1}\to\mathbb S^n$ be the Hopf fibration for $\DA$ of dimension
$n\in\{1,2,4,8\}$, and let $\Sigma\cong\mathbb S^{n-1}\subset\mathbb S^n$ be an equatorial
sphere. Then $\pi^{-1}(\Sigma)\cong\mathbb S^{n-1}\times\mathbb S^{n-1}$, that is, four points,
a torus $\mathbb S^1\times\mathbb S^1$, $\mathbb S^3\times\mathbb S^3$ and $\mathbb S^7\times\mathbb S^7$
for $\DA=\RR,\CC,\HH,\OO$ respectively.
\end{lem}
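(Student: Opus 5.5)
Since $\pi$ is equivariant under the isometries of $\mathbb S^n$ realised by the bi-unitary action, and all equatorial spheres are congruent, I would first reduce to one convenient equator. The most natural choice is the ``polar'' equator $\Sigma_0=\{y=0\}=\{(0,p):|p|=1\}$, because the formula~\eqref{eq:pi} singles out the north pole. The reduction can be done without invoking any symmetry group. An equatorial sphere is $\Sigma=\{Y\in\mathbb S^n:\Bf(Y,Z)=0\}$ for some unit $Z$. One picks a matrix $h\in\cL_\DA$ whose first column is a point of $\pi^{-1}(Z)$; such an $h$ exists by the remark after Proposition~\ref{prop:hopfmatrix}, for instance via the special form~\eqref{eq:gspecial}, or $a=0$ at the south pole. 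For $\DA=\OO$, Lemma~\ref{lem:quatred} (quaternionic reduction) is stated later and should not be leaned on, so I would avoid the general equivariance argument there. Instead I would aim for the cleaner direct statement: the function $f(u,v)=\Bf(\pi(u,v),Z)$ is a quadratic form on $\DA^2\cong\RR^{2n}$, and $\pi^{-1}(\Sigma)=\{f=0\}\cap\mathbb S^{2n-1}$.

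The key step is then linear-algebraic. Writing $Z=(z_0,\zeta)$, we get $f(u,v)=z_0(|u|^2-|v|^2)+2\langle u\bar v,\zeta\rangle=z_0(|u|^2-|v|^2)+2\langle u,\zeta v\rangle$. Here I use Lemma~\ref{lem:Lx}(1) and the identity $\langle u\bar v,\zeta\rangle=\langle u,\zeta v\rangle$, which follows from the real-part identities (Lemma~\ref{lem:realpart}) or from right-multiplication adjointness. So $f(u,v)={}^t(u,v)\,A\,(u,v)$ with the symmetric $2n\times2n$ block matrix
\[
A=\begin{pmatrix} z_0\bu & \Lm\zeta\\ \Lm{\bar\zeta} & -z_0\bu\end{pmatrix}.
\]
By Lemma~\ref{lem:Lx}(2), $\Lm\zeta\Lm{\bar\zeta}=|\zeta|^2\bu$, and so $A^2=(z_0^2+|\zeta|^2)\bu=\bu$. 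Also $\operatorname{tr}A=0$. Hence $A$ is a symmetric involution with eigenvalues $\pm1$, each of multiplicity $n$. Diagonalising orthogonally, $\RR^{2n}=E_+\oplus E_-$ with $\dim E_\pm=n$, and $f(w)=|w_+|^2-|w_-|^2$. Then $\pi^{-1}(\Sigma)=\{|w_+|^2=|w_-|^2=\tfrac12\}$, which is the product of two round spheres of radius $1/\sqrt2$ in $E_+$ and $E_-$. That product is $\mathbb S^{n-1}\times\mathbb S^{n-1}$.

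For $n=1$ this gives $\mathbb S^0\times\mathbb S^0$, four points, consistent with the angles $\frac{\pi}{4}+\frac{\pi}{2}\ZZ$ from the introduction. For $n=2$ it gives a torus, and for $n=4,8$ it gives $\mathbb S^3\times\mathbb S^3$ and $\mathbb S^7\times\mathbb S^7$. For $Z=X_N$ this is transparent: $A=\mathrm{diag}(\bu,-\bu)$, and the set is simply $|u|^2=|v|^2=\tfrac12$. This matches the computation in the introduction.

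The only place I expect care is the adjointness identity $\langle u\bar v,\zeta\rangle=\langle u,\zeta v\rangle$ in the octonionic case. I would derive it from trace-associativity and cyclicity~\eqref{eq:real-assoc}--\eqref{eq:real-cyclic}: $\Re((u\bar v)\bar\zeta)=\Re(u(\bar v\bar\zeta))=\Re(u\,\overline{\zeta v})$. I also need $A^2=\bu$, and its off-diagonal blocks vanish because $z_0\Lm\zeta-z_0\Lm\zeta=0$. Everything else is linear algebra, so no bracketing issues arise.
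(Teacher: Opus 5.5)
Your proof is correct, but it takes a different route from the paper's. The paper argues topologically: $\Sigma$ misses a pole, the complement of a point in $\mathbb S^n$ is contractible, and a bundle over a contractible paracompact base is trivial, so $\pi^{-1}(\Sigma)\cong\Sigma\times\mathbb S^{n-1}$. The case $n=1$ is then made explicit through the angle parametrisation.

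You instead write $\pi^{-1}(\Sigma)$ as the zero set on $\mathbb S^{2n-1}$ of the quadratic form $\xi\mapsto\Bf(\pi(\xi),Z)=\langle\xi,A\xi\rangle$, and you diagonalise the traceless symmetric involution $A$. Your $A$ is exactly the Clifford operator $\Phi(Z)$ of Definition~\ref{def:Phi}, in dimension $2n$. Your identity $\langle u\bar v,\zeta\rangle=\langle u,\zeta v\rangle$ is the one used in the proof of Proposition~\ref{thm:equivariance}(1). So every ingredient you need is already in the paper (Lemmas~\ref{lem:realpart} and~\ref{lem:Lx}); the cases $\RR,\CC,\HH$ are covered because they are associative subalgebras of $\OO$.

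The paper's argument is shorter and works for any subset of the base that avoids a point. However, it relies on the classical local triviality of $\pi$, which Proposition~\ref{prop:hopfmatrix} does not establish (it only identifies the fibres), and it yields only an abstract trivialisation.

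Your argument is self-contained and explicit, and it gives more:
\begin{itemize}
\item It realises $\pi^{-1}(\Sigma)$ isometrically as the generalised Clifford torus $\{|w_+|^2=|w_-|^2=\tfrac12\}$ in $E_+\oplus E_-$.
\item By the argument of Proposition~\ref{thm:equivariance}(1), the unit spheres of $E_\pm$ are precisely the fibres $\pi^{-1}(\pm Z)$.
\item It treats $n=1$ uniformly with the other cases.
\item With $f=t$ in place of $f=0$, the same computation identifies every latitude sphere $\{\Bf(Y,Z)=t\}$, $|t|<1$, as $\mathbb S^{n-1}\bigl(\sqrt{(1+t)/2}\bigr)\times\mathbb S^{n-1}\bigl(\sqrt{(1-t)/2}\bigr)$. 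This generalises the torus computation for $\mathbb S^3\to\mathbb S^2$ in the introduction.
\end{itemize}

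Finally, the reduction to a convenient equator in your first paragraph is unnecessary, since the quadratic-form argument already handles an arbitrary unit $Z$. You can drop it, together with the aside about Lemma~\ref{lem:quatred}.
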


\begin{proof}
$\Sigma$ is contained in the complement of a pole, which is contractible. A fiber bundle
over a contractible paracompact base is trivial, hence so is its restriction to $\Sigma$.
For $n=1$, $\Sigma=\{(0,\pm1)\}$ and, in the parametrisation of~\eqref{firsthopf},
$\pi^{-1}(\Sigma)=\{\theta\in\tfrac{\pi}{4}+\tfrac{\pi}{2}\ZZ\}$.
\end{proof}

\begin{rem}
\label{rem:fibersgroups}
For $n=1,2,4$ the fiber $\mathbb S^{n-1}$ is the group of unit elements of $\RR,\CC,\HH$,
namely $\Spin(1)\cong\ZZ_2$, $\Spin(2)\cong\mathrm U(1)$, $\Spin(3)\cong\mathrm{SU}(2)$,
so that $\pi^{-1}(\Sigma)\cong\Spin(m)\times\Spin(m)$, $m=1,2,3$. Only for $n=4$ is this
product again a spin group, $\Spin(3)\times\Spin(3)\cong\Spin(4)$, whereas
$\SO(4)\cong(\mathbb S^3\times\mathbb S^3)/\ZZ_2$. For $n=1$ the statement is a mere count
($\Sigma$ is disconnected), and for $n=8$ the factor $\mathbb S^7$ is not a group.
\end{rem}
%

\section{The Clifford map and $\Spin(9)$}
\label{sec:Spin9}

This section collects classical material \cite{Harvey,LawsonMichelsohn,OrneaPartonPiccinniVuletescu,PartonAxioms},
with proofs adapted to the present matrix language. Everything here takes place in the
associative algebra $\End_\RR(\OO^2)\cong M_{16}(\RR)$, so no bracketing issue arises.

\begin{defi}[Clifford map]
\label{def:Phi}
Equip $\OO^2\cong\RR^{16}$ with the inner product
$\bigl\langle\binom uv,\binom{u'}{v'}\bigr\rangle=\langle u,u'\rangle+\langle v,v'\rangle$.
For $X=X(y,p)\in\Herm_2(\OO)_0$ define the real-linear operator
\[
\Phi(X)=\begin{pmatrix} y\,\bu_8 & \Lm p\\ \Lm{\bar p} & -y\,\bu_8\end{pmatrix},\qquad
\Phi(X)\binom uv=\binom{yu+pv}{\bar pu-yv}.
\]
\end{defi}

\begin{prop}[Clifford relations]
\label{prop:clifford}
For all $X,X'\in\Herm_2(\OO)_0$:
\begin{enumerate}
\item $\Phi(X)$ is symmetric, and $\Phi(X)\Phi(X')+\Phi(X')\Phi(X)=2\Bf(X,X')\,\bu_{16}$;
in particular $\Phi(X)^2=Q(X)\bu_{16}$;
\item (reflection formula) if $Q(Y)=1$, then $\Phi(Y)\Phi(X)\Phi(Y)=\Phi\bigl(2\Bf(X,Y)Y-X\bigr)=-\Phi(r_Y X)$,
where $r_Y$ is the orthogonal reflection of $\Herm_2(\OO)_0$ in the hyperplane $Y^\perp$.
\end{enumerate}
\end{prop}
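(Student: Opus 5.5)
The plan is a direct block-matrix computation in the associative algebra $\End_\RR(\OO^2)$, using only Lemma~\ref{lem:Lx}. For symmetry, the transpose of the block operator $\Phi(X)$ with respect to the inner product on $\OO^2$ is obtained by transposing the blocks and swapping the off-diagonal positions. By Lemma~\ref{lem:Lx}(1), $\Lm p^{\mathrm t}=\Lm{\bar p}$ and $\Lm{\bar p}^{\mathrm t}=\Lm p$, so $\Phi(X)^{\mathrm t}=\Phi(X)$.

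For the anticommutator, I will multiply the $2\times2$ block matrices. This is legitimate because the entries are real-linear operators and composition is associative. With $X=X(y,p)$ and $X'=X(y',p')$:
\begin{itemize}
\item the $(1,1)$ block is $yy'+\Lm p\Lm{\bar p'}$, and the $(2,2)$ block is $yy'+\Lm{\bar p}\Lm{p'}$;
\item the $(1,2)$ block is $y\Lm{p'}-\Lm p y'$, and the $(2,1)$ block is $\Lm{\bar p}y'-y\Lm{\bar p'}$.
\end{itemize}
Adding the same expressions with the primes swapped, the off-diagonal blocks cancel. The diagonal blocks become $2yy'+\Lm p\Lm{\bar p'}+\Lm{p'}\Lm{\bar p}$ and $2yy'+\Lm{\bar p}\Lm{p'}+\Lm{\bar p'}\Lm p$.

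Lemma~\ref{lem:Lx}(2) applied to $(p,p')$ gives $2\langle p,p'\rangle\bu_8$ for the first sum of left multiplications. For the second, apply Lemma~\ref{lem:Lx}(2) to $(\bar p,\bar p')$, together with $\langle\bar p,\bar p'\rangle=\langle p,p'\rangle$, which follows from $\Re(\bar p p')=\Re(p'\bar p)$. Both diagonal blocks therefore equal $2(yy'+\langle p,p'\rangle)\bu_8=2\Bf(X,X')\bu_8$. Setting $X'=X$ gives $\Phi(X)^2=Q(X)\bu_{16}$.

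For the reflection formula, I will use only the anticommutation relation and the linearity of $\Phi$. Write $\Phi(X)\Phi(Y)=2\Bf(X,Y)\bu_{16}-\Phi(Y)\Phi(X)$ and multiply on the left by $\Phi(Y)$. Since $\Phi(Y)^2=Q(Y)\bu_{16}=\bu_{16}$, this gives
\[
\Phi(Y)\Phi(X)\Phi(Y)=2\Bf(X,Y)\Phi(Y)-\Phi(X)=\Phi\bigl(2\Bf(X,Y)Y-X\bigr).
\]
Finally, $r_YX=X-2\Bf(X,Y)Y$ is the reflection in $Y^\perp$, because $\Bf(Y,Y)=Q(Y)=1$. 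Hence the right-hand side is $-\Phi(r_YX)$.

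The only delicate point is the $(2,2)$ block, which needs the conjugated form of Lemma~\ref{lem:Lx}(2) and the invariance of the inner product under conjugation. Everything else is routine, since no octonionic triple products appear: all compositions are of operators, and Lemma~\ref{lem:Lx} has already absorbed the alternativity.
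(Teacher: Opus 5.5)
Your proof is correct and follows the paper's argument. You get symmetry from Lemma~\ref{lem:Lx}(1), compute the anticommutator blockwise (Lemma~\ref{lem:Lx}(2) on the diagonal, cancellation off it), and obtain the reflection formula by multiplying the anticommutation relation on the left by $\Phi(Y)$. You also treat the $(2,2)$ block explicitly, via Lemma~\ref{lem:Lx}(2) applied to $(\bar p,\bar p')$ together with $\langle\bar p,\bar p'\rangle=\langle p,p'\rangle$; the paper leaves this implicit, writing out only the $(1,1)$ block.
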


\begin{proof}
(1) Symmetry follows from Lemma~\ref{lem:Lx}(1). The diagonal blocks of the anticommutator
are $2yy'\bu_8+\Lm p\Lm{\bar p'}+\Lm{p'}\Lm{\bar p}=2(yy'+\langle p,p'\rangle)\bu_8$ by
Lemma~\ref{lem:Lx}(2), and the off-diagonal blocks are $y\Lm{p'}-y'\Lm p+y'\Lm p-y\Lm{p'}=0$.
(2) From (1), $\Phi(X)\Phi(Y)=2\Bf(X,Y)-\Phi(Y)\Phi(X)$; multiply on the left by $\Phi(Y)$
and use $\Phi(Y)^2=\bu_{16}$.
\end{proof}

By Proposition~\ref{prop:clifford}(1), $\Phi(X)^2=Q(X)\bu_{16}$ for all $X$. By the universal
property of Clifford algebras, $\Phi$ therefore extends to an algebra homomorphism
$C\ell(\RR^9,Q)\to\End_\RR(\OO^2)\cong M_{16}(\RR)$; from now on we work in this
\emph{associative} matrix algebra. For $Q(Y)=1$, $\Phi(Y)$ is symmetric with $\Phi(Y)^2=\bu_{16}$,
hence orthogonal; being traceless, it has eigenvalues $\pm1$ with multiplicity $8$ each, so
$\Phi(Y)\in\SO(16)$. Recall that $\Spin(9)\subset C\ell(\RR^9,Q)$ is the group generated by the
products $YY'$ of two unit vectors. Since $C\ell(\RR^9,Q)\cong M_{16}(\RR)\oplus M_{16}(\RR)$ and
$\Phi$ maps the even subalgebra isomorphically onto $M_{16}(\RR)$, $\Phi$ is injective on
$\Spin(9)$, and we identify
\begin{equation}
\label{eq:Spin9}
\Spin(9)=\bigl\langle\,\Phi(Y)\Phi(Y')\ :\ Y,Y'\in\mathbb S^8\,\bigr\rangle\subset\SO(16),
\end{equation}
acting on $\RR^{16}$ by its real spin representation \cite{LawsonMichelsohn,Harvey}.
Applying Proposition~\ref{prop:clifford}(2) twice, and using $\Phi(Y)^{-1}=\Phi(Y)$,
\[
\bigl(\Phi(Y)\Phi(Y')\bigr)\,\Phi(X)\,\bigl(\Phi(Y)\Phi(Y')\bigr)^{-1}
=\Phi(Y)\bigl(-\Phi(r_{Y'}X)\bigr)\Phi(Y)=\Phi(r_Yr_{Y'}X),
\]
so each generator acts on $\Herm_2(\OO)_0$ by the rotation $r_Yr_{Y'}$. Hence, for every
$S\in\Spin(9)$, $S\Phi(X)S^{-1}=\Phi(\rho(S)X)$ for a unique $\rho(S)\in\SO(9)$ (uniqueness
because $\Phi$ is injective), and $\rho$ is a homomorphism, onto by the Cartan--Dieudonn\'e
theorem. If $\rho(S)=\mathrm{id}$, then $S$ commutes with $\Phi(C\ell(\RR^9,Q))=M_{16}(\RR)$, so
$S=\pm\bu_{16}$; both occur, since $\Phi(Y)\Phi(-Y)=-\bu_{16}$. Thus $\rho:\Spin(9)\to\SO(9)$
is the double covering with kernel $\{\pm\bu_{16}\}$. Finally, $\SO(9)$ acts transitively on
$\mathbb S^8$ and the stabiliser of $X_N$ is $\SO(8)$, acting on $X_N^\perp\cong\RR^8$; its
preimage under $\rho$ is the subgroup $\Spin(8)$ generated by the $\Phi(Y)\Phi(Y')$ with
$Y,Y'\perp X_N$, whence $\mathbb S^8\cong\SO(9)/\SO(8)\cong\Spin(9)/\Spin(8)$.

\begin{prop}[$\Spin(9)$ equivariance of the octonionic Hopf map]
\label{thm:equivariance}
For $\xi=(u,v)\in\mathbb S^{15}$:
\begin{enumerate}
\item $\pi(\xi)$ is the unique $X\in\mathbb S^8$ such that $\Phi(X)\xi=\xi$;
\item for every $S\in\Spin(9)$, $\pi(S\xi)=\rho(S)\,\pi(\xi)$.
\end{enumerate}
\end{prop}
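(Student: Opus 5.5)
For part (1), I will first show that $\Phi(\pi(\xi))\xi=\xi$ by direct computation, and then get uniqueness from a spectral argument. Write $X=\pi(\xi)=X(y,p)$ with $y=|u|^2-|v|^2$ and $p=2u\bar v$. Then
\[
\Phi(X)\binom uv=\binom{yu+(2u\bar v)v}{(2v\bar u)u-yv}.
\]
Artin's theorem applies to the subalgebra generated by $u$ and $v$, which is associative. So $(u\bar v)v=u|v|^2$ and $(v\bar u)u=v|u|^2$. The first component becomes $(|u|^2-|v|^2+2|v|^2)u=(|u|^2+|v|^2)u=u$, and the second similarly becomes $v$. This gives $\Phi(\pi(\xi))\xi=\xi$.

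For uniqueness, suppose $X'\in\mathbb S^8$ also satisfies $\Phi(X')\xi=\xi$. Proposition~\ref{prop:clifford}(1) gives
\[
\bigl(\Phi(X)\Phi(X')+\Phi(X')\Phi(X)\bigr)\xi=2\Bf(X,X')\xi.
\]
The left-hand side equals $2\xi$, and $\xi\neq0$, so $\Bf(X,X')=1$. Since $X$ and $X'$ are unit vectors for the positive definite form $\Bf$, the equality case of Cauchy--Schwarz forces $X'=X$.

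For part (2), I will combine (1) with the intertwining relation $S\Phi(X)S^{-1}=\Phi(\rho(S)X)$ established before the statement. The vector $S\xi$ lies in $\mathbb S^{15}$, because $\Spin(9)\subset\SO(16)$ preserves the norm. We then compute
\[
\Phi(\rho(S)\pi(\xi))\,S\xi=S\Phi(\pi(\xi))S^{-1}S\xi=S\Phi(\pi(\xi))\xi=S\xi.
\]
Moreover $\rho(S)\pi(\xi)\in\mathbb S^8$, since $\rho(S)\in\SO(9)$. The uniqueness in (1), applied at the point $S\xi$, then gives $\pi(S\xi)=\rho(S)\pi(\xi)$.

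The only delicate step is the use of alternativity in the existence computation. There $(u\bar v)v$ must be rewritten as $u(\bar v v)$; this is legitimate because $\bar v=2\Re v-v$ lies in the subalgebra generated by $v$, and the subalgebra generated by $u$ and $v$ is associative. Everything else happens in the associative algebra $\End_\RR(\OO^2)$ and is routine.
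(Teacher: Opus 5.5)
Your proof is correct. The existence computation and part (2) coincide with the paper's, and you rightly make explicit that $\rho(S)\pi(\xi)\in\mathbb S^8$, which the uniqueness step requires.

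The real difference is in how uniqueness obtains $\Bf(X',\pi(\xi))=1$:
\begin{enumerate}
\item The paper uses the trace-associativity identities \eqref{eq:real-assoc}--\eqref{eq:real-cyclic} to prove $\langle\xi,\Phi(X)\xi\rangle=\Bf(X,\pi(\xi))$ for every $X\in\Herm_2(\OO)_0$. It then reads off $\Bf(X',\pi(\xi))=\langle\xi,\xi\rangle=1$.
\item You instead apply the anticommutation relation of Proposition~\ref{prop:clifford}(1) to $\xi$, evaluating both products with the existence part.
\end{enumerate}

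Your route needs no further octonionic computation. It also shows that uniqueness is purely Clifford-algebraic: two unit vectors whose Clifford images share a nonzero $(+1)$-eigenvector must coincide. The octonions therefore enter only through existence.

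The paper's route does not use existence to get uniqueness, and it yields a stronger identity: $\pi(\xi)$ is the vector dual to the quadratic form $X\mapsto\langle\xi,\Phi(X)\xi\rangle$. With that identity, (2) follows even without invoking uniqueness, since $\Bf(X,\pi(S\xi))=\langle\xi,S^{-1}\Phi(X)S\xi\rangle=\Bf(X,\rho(S)\pi(\xi))$ for all $X$.
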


\begin{proof}
(1) With $y=|u|^2-|v|^2$, $p=2u\bar v$, alternativity gives $(u\bar v)v=|v|^2u$ and
$(v\bar u)u=|u|^2v$, so $\Phi(\pi(\xi))\xi=\bigl((|u|^2+|v|^2)u,(|u|^2+|v|^2)v\bigr)=\xi$.
Next, $\langle u,pv\rangle=\Re\bigl(u(\bar v\bar p)\bigr)=\Re\bigl((u\bar v)\bar p\bigr)=\langle u\bar v,p\rangle$
by~\eqref{eq:real-assoc}, and likewise $\langle v,\bar pu\rangle=\langle u\bar v,p\rangle$, so
$\langle\xi,\Phi(X)\xi\rangle=\Bf(X,\pi(\xi))$ for every $X\in\Herm_2(\OO)_0$. If $X\in\mathbb S^8$
and $\Phi(X)\xi=\xi$, then $1=\Bf(X,\pi(\xi))$, which forces $X=\pi(\xi)$ by Cauchy--Schwarz.
(2) $\Phi(\rho(S)\pi(\xi))\,S\xi=S\Phi(\pi(\xi))S^{-1}S\xi=S\xi$, and $S\xi\in\mathbb S^{15}$
since $S\in\SO(16)$; conclude by (1).
\end{proof}

\begin{rem}
Since $\Phi(X)$, $X\in\mathbb S^8$, is a traceless symmetric involution, (1) identifies the fiber
$\pi^{-1}(X)$ with the unit sphere of its $8$-dimensional $(+1)$-eigenspace, recovering the
fibers $\mathbb S^7$ of Proposition~\ref{prop:hopfmatrix} intrinsically.
\end{rem}

Proposition~\ref{thm:equivariance} is statement (S) of the Introduction. It is phrased entirely
in terms of $\Phi$ and of $\Spin(9)\subset\SO(16)$, and makes no use of octonionic
$2\times2$ matrix products. The question addressed in the rest of the paper is to what
extent the octonionic matrix expression $(gX)g^\dagger$ reproduces $\rho$.

\section{The RLLR/LRRL Bracketing Rule}
\label{sec:mnemonic}

\subsection{The bracketing problem}
\label{sec:problem}

Let $g=\begin{pmatrix}a&b\\c&d\end{pmatrix}\in\cL_\OO$ and
$X=X(y,p)\in\Herm_2(\OO)_0$. The product $Y:=gX$ involves only binary products:
\[
Y=\begin{pmatrix}ay+b\bar p & ap-by\\cy+d\bar p & cp-dy\end{pmatrix}.
\]
Each entry of $Yg^\dagger$ contains two terms of the form $y\,(xz)$, which are
unambiguous, and two genuine triple products $(\text{octonion})(\text{octonion})(\text{octonion})$,
each admitting two bracketings.

\begin{defi}[Bracketing symbols]
\label{def:brackets}
For $x,y,z\in\OO$ we write
$\Rp(x,y,z):=(xy)z$ and $\Lp(x,y,z):=x(yz)$.
\end{defi}

The eight triple products occurring in $(gX)g^\dagger$ are
\begin{equation}
\label{eq:triples}
\begin{array}{ll}
(1,1):\ (b,\bar p,\bar a),\ (a,p,\bar b), & (1,2):\ (b,\bar p,\bar c),\ (a,p,\bar d),\\
(2,1):\ (d,\bar p,\bar a),\ (c,p,\bar b), & (2,2):\ (d,\bar p,\bar c),\ (c,p,\bar d),
\end{array}
\end{equation}
so there are $2^8$ bracketing assignments a priori.

\subsection{The RLLR/LRRL rule}
\label{sec:rule}

\begin{defi}[RLLR/LRRL interlaced evaluation]
\label{def:RLLR}
Given $g=\begin{psmallmatrix}a&b\\c&d\end{psmallmatrix}\in\cL_\OO$ and
$X=X(y,p)\in\Herm_2(\OO)_0$, the \emph{interlaced evaluation}
$X'=\mathrm{Eval}(g;X)$ is defined as follows.

\noindent\textbf{Upper row} (pattern $\Rp\,\Lp\,\Lp\,\Rp$):
\begin{equation}
\label{eq:upper}
\begin{aligned}
X'_{11} &= y\bigl(|a|^2-|b|^2\bigr)
         + \underbrace{(b\bar p)\bar a}_{\Rp} + \underbrace{a(p\bar b)}_{\Lp}, \\
X'_{12} &= y(a\bar c - b\bar d)
         + \underbrace{b(\bar p\,\bar c)}_{\Lp} + \underbrace{(ap)\bar d}_{\Rp}.
\end{aligned}
\end{equation}

\noindent\textbf{Lower row} (pattern $\Lp\,\Rp\,\Rp\,\Lp$):
\begin{equation}
\label{eq:lower}
\begin{aligned}
X'_{21} &= y(c\bar a - d\bar b)
         + \underbrace{d(\bar p\,\bar a)}_{\Lp} + \underbrace{(cp)\bar b}_{\Rp}, \\
X'_{22} &= y\bigl(|c|^2-|d|^2\bigr)
         + \underbrace{(d\bar p)\bar c}_{\Rp} + \underbrace{c(p\bar d)}_{\Lp}.
\end{aligned}
\end{equation}
\end{defi}

The label ``RLLR/LRRL'' refers to the sequence of bracketing types read from left to right
across each row. In the associative cases all bracketings coincide, and the rule reduces to
the standard adjoint action; its content is exclusively octonionic.

\subsection{The conjugation mirror and Hermiticity}
\label{sec:mirror}

The RLLR/LRRL pattern is not an arbitrary convention: it is compatible with the
conjugation-mirror symmetry required to preserve Hermiticity, and admits the complementary
LRRL/RLLR prescription (Section~\ref{sec:admissible}). The extent to which these
requirements characterise the admissible bracketings is determined in
Proposition~\ref{prop:admissible} below.

\begin{lem}[Conjugation mirror]
\label{lem:mirror}
By~\eqref{eq:conj-swap}, conjugation maps each triple in~\eqref{eq:triples} to another
one, swapping $\Rp\leftrightarrow\Lp$:
\[
\overline{(b\bar p)\bar a}=a(p\bar b),\quad
\overline{b(\bar p\bar c)}=(cp)\bar b,\quad
\overline{(ap)\bar d}=d(\bar p\bar a),\quad
\overline{(d\bar p)\bar c}=c(p\bar d),
\]
together with the same identities with $\Rp$ and $\Lp$ exchanged. In particular, with the
RLLR/LRRL prescription, $X'_{21}=\overline{X'_{12}}$: this is the Hermitian off-diagonal
relation.
\end{lem}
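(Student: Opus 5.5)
The plan is to apply the two identities of~\eqref{eq:conj-swap} in Lemma~\ref{lem:realpart} to each triple in~\eqref{eq:triples}. Those identities hold in any algebra with standard involution, so no alternativity is needed here. We then read off the Hermitian relation for the RLLR/LRRL rule of Definition~\ref{def:RLLR}.

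\emph{Step 1 (the four mirror identities).} The first identity in~\eqref{eq:conj-swap}, $\overline{(xy)z}=\bar z(\bar y\bar x)$, is applied with each triple in turn:
\begin{itemize}
\item with $(x,y,z)=(b,\bar p,\bar a)$ it gives $\overline{(b\bar p)\bar a}=a(p\bar b)$;
\item with $(a,p,\bar d)$ it gives $\overline{(ap)\bar d}=d(\bar p\bar a)$;
\item with $(d,\bar p,\bar c)$ it gives $\overline{(d\bar p)\bar c}=c(p\bar d)$.
\end{itemize}
The second identity, $\overline{x(yz)}=(\bar z\bar y)\bar x$, applied with $(b,\bar p,\bar c)$ gives $\overline{b(\bar p\bar c)}=(cp)\bar b$. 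In each case we use $\bar{\bar x}=x$.

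The versions with $\Rp$ and $\Lp$ exchanged come from conjugating both sides of each of these identities, since the involution is an involution. Together these show that conjugation permutes the eight triples, mapping the $(i,j)$ triples to the $(j,i)$ triples and swapping the bracketing type.

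\emph{Step 2 (Hermiticity of the off-diagonal entries).} We conjugate $X'_{12}$ from~\eqref{eq:upper} term by term.
\begin{itemize}
\item The scalar term gives $\overline{y(a\bar c-b\bar d)}=y(c\bar a-d\bar b)$, because $y$ is real and $\overline{a\bar c}=c\bar a$.
\item The $\Lp$ term $b(\bar p\bar c)$ maps to $(cp)\bar b$, which is the $\Rp$ term of $X'_{21}$.
\item The $\Rp$ term $(ap)\bar d$ maps to $d(\bar p\bar a)$, which is the $\Lp$ term of $X'_{21}$.
\end{itemize}
Summing these, $\overline{X'_{12}}=X'_{21}$ as defined in~\eqref{eq:lower}.

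\emph{Step 3 (reality of the diagonal entries, for completeness).} In $X'_{11}$ the two triple terms are conjugates of each other by Step 1, so their sum is $2\Re((b\bar p)\bar a)$, which is real. The same holds for $X'_{22}$.

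There is essentially no obstacle. The only point needing care is the bookkeeping: the RLLR upper pattern must be matched against the LRRL lower pattern so that the bracketing swap produced by conjugation lands exactly on the prescribed bracket in the mirrored entry. This matching is precisely why the two rows carry complementary patterns.
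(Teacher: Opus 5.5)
Your argument is the paper's own. The paper justifies the lemma simply by applying \eqref{eq:conj-swap} to each triple, and your four Step~1 identities and your term-by-term check of $X'_{21}=\overline{X'_{12}}$ are correct. Step~3 is extra: it is the diagonal part of Proposition~\ref{prop:Hermoutput}.

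One sub-step is misjustified. Conjugating both sides of $\overline{(b\bar p)\bar a}=a(p\bar b)$ only gives $\overline{a(p\bar b)}=(b\bar p)\bar a$, which is the same pair read backwards. It does not give the $\Rp\leftrightarrow\Lp$-exchanged identity $\overline{b(\bar p\bar a)}=(ap)\bar b$. Since $b(\bar p\bar a)\neq(b\bar p)\bar a$ in $\OO$ in general, the exchanged identities carry new information, and your derivation covers only $8$ of the $16$ bracketed products.

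The fix is immediate: apply the other half of \eqref{eq:conj-swap} to the oppositely bracketed triple, as you already did for $b(\bar p\bar c)$. The exchanged versions matter because they are what is used later, in both directions of Proposition~\ref{prop:admissible}, e.g.\ $\overline{(b\bar p)\bar c}=c(p\bar b)$. The Hermitian off-diagonal relation for RLLR/LRRL is unaffected, since it needs only the two identities you derived correctly.
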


\begin{prop}[Trace-free Hermitian output]
\label{prop:Hermoutput}
Let $\DA$ be a trace-associative algebra with standard involution (for instance $\OO$ or the
split octonions), $g\in\cL_\DA$ and $X\in\Herm_2(\DA)_0$. Then
$X'=\mathrm{Eval}(g;X)\in\Herm_2(\DA)_0$, i.e.\
\[
X'_{11}\in\RR,\qquad X'_{21}=\overline{X'_{12}},\qquad X'_{22}=-X'_{11},
\]
and
\begin{equation}
\label{eq:yprime}
X'_{11}=y\bigl(N(a)-N(b)\bigr)+2\Re\bigl(b\bar p\bar a\bigr),
\end{equation}
where $\Re(b\bar p\bar a)$ is independent of the bracketing by~\eqref{eq:real-assoc}.
\end{prop}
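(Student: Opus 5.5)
The plan is to check the three conditions in turn, using Lemma~\ref{lem:realpart}, Lemma~\ref{lem:mirror} and the constraints \eqref{eq:constraints1}--\eqref{eq:constraints2}.

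\emph{Off-diagonal relation.} In $X'_{21}$ the $y$-term is $y(c\bar a-d\bar b)=\overline{y(a\bar c-b\bar d)}$, which is the conjugate of the $y$-term of $X'_{12}$. For the triple products, Lemma~\ref{lem:mirror} gives $\overline{b(\bar p\bar c)}=(cp)\bar b$ and $\overline{(ap)\bar d}=d(\bar p\bar a)$. These identities rest only on \eqref{eq:conj-swap}, which holds for any standard involution. Hence $X'_{21}=\overline{X'_{12}}$.

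\emph{Reality of $X'_{11}$ and formula \eqref{eq:yprime}.} The $y$-term is real. By Lemma~\ref{lem:mirror}, $a(p\bar b)=\overline{(b\bar p)\bar a}$, so the two triple products in $X'_{11}$ sum to $w+\bar w=2\Re\bigl((b\bar p)\bar a\bigr)\in\RR$, with $w:=(b\bar p)\bar a$. By \eqref{eq:real-assoc} this real part does not depend on the bracketing, so we may write it as $\Re(b\bar p\bar a)$. This gives \eqref{eq:yprime} with $|a|^2=N(a)$.

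\emph{Trace-freeness.} This is the only step that uses bi-unitarity. In the same way as above, $X'_{22}=y(N(c)-N(d))+2\Re\bigl((d\bar p)\bar c\bigr)$. By Lemma~\ref{lem:equalnorms}, $N(c)-N(d)=N(b)-N(a)$, so the $y$-terms cancel in $X'_{11}+X'_{22}$. It remains to prove
\[
\Re(b\bar p\bar a)+\Re(d\bar p\bar c)=0 .
\]
Using the cyclic identity \eqref{eq:real-cyclic}, the sum becomes $\Re\bigl((\bar a b)\bar p\bigr)+\Re\bigl((\bar c d)\bar p\bigr)=\Re\bigl((\bar ab+\bar cd)\bar p\bigr)$. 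This vanishes by the column constraint $\bar ab+\bar cd=0$ in \eqref{eq:constraints2}. Care is needed only to pick the cyclic rotation $x\,y\,z\mapsto z\,x\,y$ (moving $\bar a$ to the front) so that the products $\bar ab$ and $\bar cd$ appear as binary products. Since the real part is bracketing-independent, this regrouping is legitimate. I expect this bookkeeping with \eqref{eq:real-cyclic} to be the main point to get right; everything else is formal.
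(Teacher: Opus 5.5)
Your proposal is correct and follows essentially the same route as the paper: Lemma~\ref{lem:mirror} gives the off-diagonal relation and the reality of the diagonal entries, and trace-freeness follows from Lemma~\ref{lem:equalnorms}, cyclicity~\eqref{eq:real-cyclic} and the column constraint $\bar ab+\bar cd=0$. The only difference is that the paper writes the sum as $\Re\bigl(\bar p(\bar cd+\bar ab)\bigr)$ instead of $\Re\bigl((\bar ab+\bar cd)\bar p\bigr)$, which does not matter.
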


\begin{proof}
By Lemma~\ref{lem:mirror}, $a(p\bar b)=\overline{(b\bar p)\bar a}$, so
$(b\bar p)\bar a+a(p\bar b)=2\Re((b\bar p)\bar a)\in\RR$; similarly $X'_{22}\in\RR$, and
$X'_{21}=\overline{X'_{12}}$ by Lemma~\ref{lem:mirror}. For the trace,
$N(c)-N(d)=-(N(a)-N(b))$ by Lemma~\ref{lem:equalnorms}, and, by~\eqref{eq:real-cyclic},
$\Re(d\bar p\bar c)+\Re(b\bar p\bar a)=\Re\bigl(\bar p(\bar c d+\bar a b)\bigr)=0$
by~\eqref{eq:constraints2}.
\end{proof}

\subsection{Admissible bracketings, the complementary rule, and the order $g(Xg^\dagger)$}
\label{sec:admissible}

\begin{defi}[Admissible assignment]
\label{def:admissible}
A bracketing assignment for the eight triples~\eqref{eq:triples} is \emph{admissible} if,
for every $g\in\cL_\OO$ and $X\in\Herm_2(\OO)_0$, the resulting matrix lies in
$\Herm_2(\OO)_0$.
\end{defi}

\begin{lem}[Associator identities]
\label{lem:associds}
In $\OO$, $[\bar z,\bar y,\bar x]=[x,y,z]$ for all $x,y,z$. In particular, for the
triples~\eqref{eq:triples},
\[
[b,\bar p,\bar a]=[a,p,\bar b],\quad [d,\bar p,\bar c]=[c,p,\bar d],\quad
[c,p,\bar b]=[b,\bar p,\bar c],\quad [d,\bar p,\bar a]=[a,p,\bar d].
\]
\end{lem}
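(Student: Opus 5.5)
The general identity $[\bar z,\bar y,\bar x]=[x,y,z]$ is essentially already contained in the proof of Lemma~\ref{lem:realpart}; I will isolate it and then specialise. The plan has two steps.

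First, I will compute the conjugate of an associator. By~\eqref{eq:conj-swap},
\[
\overline{[x,y,z]}=\overline{(xy)z}-\overline{x(yz)}=\bar z(\bar y\bar x)-(\bar z\bar y)\bar x=-[\bar z,\bar y,\bar x].
\]
Second, I will show that the associator is purely imaginary. Since $\OO$ is alternative, the associator is an alternating trilinear function and vanishes as soon as one argument is real. Writing $\bar w=2\Re(w)-w$ in each slot and expanding, every term containing a real factor drops out. This gives
\[
[\bar z,\bar y,\bar x]=(-1)^3[z,y,x]=-[z,y,x].
\]
Reversing the order of three arguments is a single transposition (swap the first and third), so alternation gives $-[z,y,x]=[x,y,z]$. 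This is exactly the claimed identity $[\bar z,\bar y,\bar x]=[x,y,z]$. As a by-product, the first step then shows $\overline{[x,y,z]}=-[x,y,z]$, which matches Lemma~\ref{lem:realpart}.

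It then remains to specialise to the pairs listed in~\eqref{eq:triples}, using $\bar{\bar u}=u$:
\begin{itemize}
\item $(x,y,z)=(a,p,\bar b)$ gives $(\bar z,\bar y,\bar x)=(b,\bar p,\bar a)$, hence $[b,\bar p,\bar a]=[a,p,\bar b]$.
\item $(c,p,\bar d)$ gives $(d,\bar p,\bar c)$.
\item $(b,\bar p,\bar c)$ gives $(c,p,\bar b)$.
\item $(a,p,\bar d)$ gives $(d,\bar p,\bar a)$.
\end{itemize}
These are precisely the four stated equalities, with sides arranged as in the statement.

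I expect no serious obstacle. The only point needing care is the sign bookkeeping: one factor $(-1)^3$ comes from the three conjugations, and one sign comes from the order-reversing permutation. Together these account for the sign flip between the two expressions.
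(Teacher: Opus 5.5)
Your proof is correct and is essentially the paper's argument: the paper's proof points back to the proof of Lemma~\ref{lem:realpart}, where exactly your expansion $\bar w=2\Re(w)-w$ combined with alternation gives $[\bar z,\bar y,\bar x]=-[z,y,x]=[x,y,z]$. Your first step, the conjugate of the associator, is not needed for the identity itself, only for the by-product $\overline{[x,y,z]}=-[x,y,z]$, and your four specialisations are checked correctly.
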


\begin{proof}
By the proof of Lemma~\ref{lem:realpart}, $\overline{[x,y,z]}=-[\bar z,\bar y,\bar x]$ and
$\overline{[x,y,z]}=-[x,y,z]$.
\end{proof}
We shall use the following explicit element of $\cL_\OO$ as a witness, written in the basis
$1,\ii,\mathrm j,\mathrm k,\ell,\ell\ii,\ell\mathrm j,\ell\mathrm k$ of Section~\ref{sec:algebras}
(recall $\ell q=(0,q)$ and $q\ell=\ell\bar q$ for $q\in\HH$):
\begin{equation}
\label{eq:hwitness}
h=\begin{pmatrix}\frac{1+\ii}{2} & \frac{\ell}{\sqrt2}\\[2pt] \frac{\ii}{\sqrt2} & \frac{-\ell+\ell\ii}{2}\end{pmatrix},
\qquad X_{\mathrm k}:=X(0,\mathrm k).
\end{equation}
One checks directly from~\eqref{eq:CD} that $h$ satisfies~\eqref{eq:constraints1}--\eqref{eq:constraints2}.
For $g=h$ and $p=\mathrm k$, the four associators of Lemma~\ref{lem:associds} are
\begin{equation}
\label{eq:hassoc}
[a,p,\bar b]=-\tfrac{1}{\sqrt2}\,\ell\mathrm j,\qquad
[c,p,\bar d]=\tfrac{1}{\sqrt2}(\ell\mathrm j+\ell\mathrm k),\qquad
[b,\bar p,\bar c]=-\ell\mathrm j,\qquad
[a,p,\bar d]=\tfrac12(\ell\mathrm j+\ell\mathrm k),
\end{equation}
and the last two are linearly independent over $\RR$.

\begin{prop}[The admissible evaluations]
\label{prop:admissible}
An assignment is admissible if and only if
\begin{enumerate}
\item[(a)] in each diagonal entry, the two triples carry opposite bracketings
($\Rp\Lp$ or $\Lp\Rp$), and
\item[(b)] the $(2,1)$ entry carries the conjugation mirror of the $(1,2)$ entry.
\end{enumerate}
Admissible assignments realise exactly four maps
$\mathrm{Eval}_{\epsilon_1\epsilon_2}$, indexed by the bracketings
$(\epsilon_1,\epsilon_2)\in\{\Rp,\Lp\}^2$ of the two triples $(b,\bar p,\bar c)$,
$(a,p,\bar d)$ in the $(1,2)$ entry. All four have the same diagonal~\eqref{eq:yprime},
and their $(1,2)$ entries differ by associators:
\[
\mathrm{Eval}_{\epsilon_1\epsilon_2}(g;X)_{12}-\mathrm{Eval}_{\epsilon'_1\epsilon'_2}(g;X)_{12}
\in\bigl\{0,\ \pm[b,\bar p,\bar c],\ \pm[a,p,\bar d],\ \pm[b,\bar p,\bar c]\pm[a,p,\bar d]\bigr\}.
\]
The RLLR/LRRL rule is $\mathrm{Eval}=\mathrm{Eval}_{\Lp\Rp}$.
\end{prop}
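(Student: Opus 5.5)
Sufficiency comes first, then necessity, which carries the real content.

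\emph{Sufficiency.} Suppose (a) and (b) hold. In a diagonal entry, say $(1,1)$, the two triples $(b,\bar p,\bar a)$ and $(a,p,\bar b)$ are conjugation mirrors by Lemma~\ref{lem:mirror}. With opposite bracketings their sum is $2\Re((b\bar p)\bar a)$, which is real, and likewise for $(2,2)$. The argument in the proof of Proposition~\ref{prop:Hermoutput} then applies verbatim. It gives $X'_{22}=-X'_{11}$ via \eqref{eq:real-cyclic} and \eqref{eq:constraints2}, and it gives formula \eqref{eq:yprime} for every admissible choice. Condition (b) together with \eqref{eq:conj-swap} gives $X'_{21}=\overline{X'_{12}}$.

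The admissible maps are counted as follows. Each diagonal entry has two choices ($\Rp\Lp$ or $\Lp\Rp$), but both yield the same real value, so they realise the same map. The $(1,2)$ entry has four bracketings $(\epsilon_1,\epsilon_2)$, and these determine the $(2,1)$ entry. Swapping one bracketing changes the entry by the corresponding associator, since $(xy)z-x(yz)=[x,y,z]$; this gives the stated set of differences. Reading off Definition~\ref{def:RLLR}, the $(1,2)$ entry uses $\Lp$ for $(b,\bar p,\bar c)$ and $\Rp$ for $(a,p,\bar d)$, so the RLLR/LRRL rule is $\mathrm{Eval}_{\Lp\Rp}$.

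\emph{Necessity.} Test with the witness $g=h$ of \eqref{eq:hwitness} and $X=X_{\mathrm k}$, so $y=0$.

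For the diagonal: if both triples in $(1,1)$ carry the same bracketing, then, since one triple is the conjugate of the \emph{opposite} bracketing of the other, the entry equals $2\Re(\cdot)\pm[a,p,\bar b]$, up to sign conventions. Its imaginary part is $\pm[a,p,\bar b]=\mp\frac1{\sqrt2}\ell\mathrm j\neq0$, so the entry is not real. The same argument works for $(2,2)$ using $[c,p,\bar d]\ne0$ from \eqref{eq:hassoc}. Associators are purely imaginary, and the associator identities of Lemma~\ref{lem:associds} let me express each failure through the listed associators.

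For the off-diagonal: fix any $(1,2)$ bracketing. The conjugate of $X'_{12}$ is the mirror assignment for $(2,1)$. Any other $(2,1)$ assignment differs from it by a nonzero combination $\pm[c,p,\bar b]$, $\pm[d,\bar p,\bar a]$, or their sum. By Lemma~\ref{lem:associds} these equal $\pm[b,\bar p,\bar c]$, $\pm[a,p,\bar d]$, or the sum. Because these two associators are linearly independent for $h$, no such nonzero combination vanishes, so Hermiticity fails.

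The main obstacle is bookkeeping the signs when a non-mirror choice is rewritten as mirror plus associator. Concretely, I will verify that $\overline{\Rp(x,y,z)}=\Lp(\bar z,\bar y,\bar x)$, and that changing $\Lp$ to $\Rp$ adds exactly the associator, so that each defect is a single associator (or a sum of two with signs) evaluated at $h$. The independence claim in \eqref{eq:hassoc} is given, so the remaining check is routine.
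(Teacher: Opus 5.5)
\textbf{Gap: the four maps are never shown to be distinct.} Your route is the paper's own. Sufficiency comes from Lemma~\ref{lem:mirror} and the argument of Proposition~\ref{prop:Hermoutput}. Necessity of (a) and (b) comes from the witness $(h,X_{\mathrm k})$, Lemma~\ref{lem:associds}, and the linear independence in \eqref{eq:hassoc}. The signs you worried about work out: an all-$\Rp$ diagonal entry has imaginary part $+[a,p,\bar b]$, and an all-$\Lp$ one has $-[a,p,\bar b]$.

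The one omission is in the word \emph{exactly}. Your counting paragraph only proves that an admissible map depends on $(\epsilon_1,\epsilon_2)$ alone, so there are \emph{at most} four maps. Knowing that the $(1,2)$ entries differ by associators does not show that different indices give different maps. Those associators can vanish on whole families: for transvections all four rules coincide, and for the special form \eqref{eq:gspecial} one has $[a,p,\bar d]=[a,p,a]=0$, which leaves only two maps.

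\textbf{Fix.} Use the same independence you already invoke for (b). Write
$\mathrm{Eval}_{\epsilon_1\epsilon_2}(g;X)_{12}-\mathrm{Eval}_{\epsilon'_1\epsilon'_2}(g;X)_{12}=t_1[b,\bar p,\bar c]+t_2[a,p,\bar d]$, with $t_i=\pm1$ when $\epsilon_i\neq\epsilon'_i$ and $t_i=0$ otherwise. At $(h,X_{\mathrm k})$ this is nonzero whenever $(t_1,t_2)\neq(0,0)$, because $-\ell\mathrm j$ and $\tfrac12(\ell\mathrm j+\ell\mathrm k)$ are linearly independent. Explicitly, the $(1,2)$ entries there are $\ell\mathrm j$, $-\tfrac12(\ell\mathrm j+\ell\mathrm k)$, $\tfrac12(\ell\mathrm j-\ell\mathrm k)$ and $0$ for $\Lp\Rp$, $\Rp\Lp$, $\Lp\Lp$, $\Rp\Rp$ respectively. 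With this step added, your proof is complete.
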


\begin{proof}
\emph{Sufficiency.} If (a) holds, the diagonal entries are real by the argument of
Proposition~\ref{prop:Hermoutput} and, being twice a bracketing-independent real part, do not
depend on which of the two opposite orders is chosen. If (b) holds, $X'_{21}=\overline{X'_{12}}$
by Lemma~\ref{lem:mirror}. The trace-free condition is then automatic
(Proposition~\ref{prop:Hermoutput}).

\emph{Necessity of (a).} Since $\Rp(x,y,z)=\Lp(x,y,z)+[x,y,z]$, if both triples of the
$(1,1)$ entry carry $\Rp$ (resp.\ $\Lp$), the entry equals
$y(N(a)-N(b))+2\Re(b\bar p\bar a)\pm[a,p,\bar b]$, using Lemma~\ref{lem:associds}; its imaginary
part is $\pm[a,p,\bar b]$. At $(g,X)=(h,X_{\mathrm k})$ this is $\mp\ell\mathrm j/\sqrt2\neq0$
by~\eqref{eq:hassoc}, so the entry is not real. The same argument applies to the $(2,2)$
entry, with imaginary part $\pm[c,p,\bar d]\neq0$ at $(h,X_{\mathrm k})$.

\emph{Necessity of (b).} Write the triple products of the $(1,2)$ entry as
$\beta_1(b,\bar p,\bar c)$ and $\beta_2(a,p,\bar d)$, and those of the $(2,1)$ entry as
$\gamma_1(c,p,\bar b)$ and $\gamma_2(d,\bar p,\bar a)$, with $\beta_i,\gamma_i\in\{\Rp,\Lp\}$, and
let $\hat\beta_i$ denote the bracketing opposite to $\beta_i$. By Lemma~\ref{lem:mirror},
$\overline{\beta_1(b,\bar p,\bar c)}=\hat\beta_1(c,p,\bar b)$ and
$\overline{\beta_2(a,p,\bar d)}=\hat\beta_2(d,\bar p,\bar a)$. Hence, by
Lemma~\ref{lem:associds},
\[
X'_{21}-\overline{X'_{12}}=s_1\,[b,\bar p,\bar c]+s_2\,[a,p,\bar d],\qquad s_i\in\{-1,0,1\},
\]
with $s_i=0$ if and only if $\gamma_i=\hat\beta_i$. At $(h,X_{\mathrm k})$ the two associators are
linearly independent by~\eqref{eq:hassoc}, so $X'_{21}=\overline{X'_{12}}$ forces
$s_1=s_2=0$, which is (b).

\emph{Exactly four maps.} Under (a) and (b) the map depends only on $(\epsilon_1,\epsilon_2)$, and
\[
\mathrm{Eval}_{\epsilon_1\epsilon_2}(g;X)_{12}-\mathrm{Eval}_{\epsilon'_1\epsilon'_2}(g;X)_{12}
=t_1[b,\bar p,\bar c]+t_2[a,p,\bar d],
\]
with $t_i\neq0$ exactly when $\epsilon_i\neq\epsilon'_i$. At $(h,X_{\mathrm k})$ this is nonzero
whenever $(\epsilon_1,\epsilon_2)\neq(\epsilon'_1,\epsilon'_2)$. Explicitly, the $(1,2)$ entries of
$\mathrm{Eval}_{\Lp\Rp},\mathrm{Eval}_{\Rp\Lp},\mathrm{Eval}_{\Lp\Lp},\mathrm{Eval}_{\Rp\Rp}$ at
$(h,X_{\mathrm k})$ are
\begin{equation}
\label{eq:hfour}
\ell\mathrm j,\qquad -\tfrac12(\ell\mathrm j+\ell\mathrm k),\qquad \tfrac12(\ell\mathrm j-\ell\mathrm k),\qquad 0,
\end{equation}
with vanishing diagonal in all four cases.
\end{proof}

\begin{rem}[What singles out RLLR/LRRL]
\label{rem:whyRLLR}
By Proposition~\ref{prop:admissible}, Hermiticity fixes the diagonal completely, but
leaves the $(1,2)$ entry free, giving four admissible maps. The RLLR/LRRL rule is the one in which the bracketing
types \emph{alternate} along each row ($\Rp\Lp\,|\,\Lp\Rp$), the second half of each row
being the mirror of the first; this is not a consequence of Hermiticity. The four rules are, however, distinguished
metrically: by Theorem~\ref{thm:blocks} below, each $\mathrm{Eval}_{\epsilon}$ is controlled by a
non-commutative $2\times2$ determinant $\Delta_\epsilon(g)$, and RLLR/LRRL is the rule
governed by the column determinant $ad-cb$; see Remark~\ref{rem:coldet}. The values
in~\eqref{eq:hfour} already show the effect: on $X_{\mathrm k}$, which has $Q=1$, the four rules
return points with $Q=1,\tfrac12,\tfrac12,0$. Note also that the
four maps coincide whenever $[b,\bar p,\bar c]=[a,p,\bar d]=0$, which is the case for the
transvections of Theorem~\ref{thm:transvection}; for the special form~\eqref{eq:gspecial},
$[a,p,\bar d]=[a,p,a]=0$ by flexibility, so only two distinct maps remain,
$\mathrm{Eval}_{\Lp\Rp}=\mathrm{Eval}_{\Lp\Lp}$ and $\mathrm{Eval}_{\Rp\Lp}=\mathrm{Eval}_{\Rp\Rp}$.
\end{rem}

\paragraph{The complementary rule and the order $g(Xg^\dagger)$.}
Exchanging $\Rp\leftrightarrow\Lp$ throughout gives the complementary rule LRRL/RLLR,
i.e.\ $E_2:=\mathrm{Eval}_{\Rp\Lp}$, while $E_1:=\mathrm{Eval}_{\Lp\Rp}$ is the RLLR/LRRL
rule. Both are admissible, they agree on the diagonal, and
\[
(E_1)_{12}-(E_2)_{12}=[a,p,\bar d]-[b,\bar p,\bar c],\qquad
(E_1)_{21}-(E_2)_{21}=\overline{[a,p,\bar d]-[b,\bar p,\bar c]}.
\]
Starting instead from the other global order $g(Xg^\dagger)$, one obtains the same eight
triples~\eqref{eq:triples}, now all naturally $\Lp$-bracketed. Applying the positional
pattern $\Rp\Lp\Lp\Rp/\Lp\Rp\Rp\Lp$ to the terms in the order in which they appear in the
expansion of $g(Xg^\dagger)$ reproduces $E_2$, and the complementary pattern reproduces
$E_1$. The two \emph{uniform} completions, all-$\Rp$ (the literal $(gX)g^\dagger$) and
all-$\Lp$ (the literal $g(Xg^\dagger)$), violate condition (a) and are not admissible.

\section{Transvections, quaternionic reduction, and the structure of the evaluation}
\label{sec:scope}

This section contains the main results of the paper (Section~\ref{sec:blocks}), preceded by
the compatible case, which is known in substance \cite{ManogueSchray1993,ManogueDray1999,DrayManogue2015} and which we treat in
the present bi-unitary and Clifford setting (Section~\ref{sec:transvections}). The bracketing rules of
Section~\ref{sec:mnemonic} admit a useful operator-theoretic interpretation: although
composition of real-linear operators on $\OO^2$ is intrinsically associative, its relationship
with octonionic matrix evaluation requires particular care. We first establish the
correspondence at the north pole, then on the transvections, where it is exact and
generates $\Spin(9)$ (Theorem~\ref{thm:transvection}). We then determine exactly what the
evaluation does for an arbitrary $g\in\cL_\OO$ (Theorem~\ref{thm:blocks}), which shows why the
correspondence cannot be extended beyond nested evaluations of transvections.

\begin{defi}[The operator $T_g$]
\label{def:Tg}
For $g=\begin{psmallmatrix}a&b\\c&d\end{psmallmatrix}\in M_2(\OO)$ define the real-linear map
\[
T_g=\begin{pmatrix}\Lm a&\Lm b\\ \Lm c&\Lm d\end{pmatrix}:\ \OO^2\to\OO^2,\qquad
T_g\binom{u}{v}=\binom{au+bv}{cu+dv}.
\]
By Lemma~\ref{lem:Lx}(1), $T_{g^\dagger}=T_g^{\mathrm t}=T_g^*$ is the adjoint of $T_g$.
\end{defi}

\begin{rem}
\label{rem:Tnotorth}
The condition $gg^\dagger=g^\dagger g=\buTwo$ does \emph{not} imply $T_g\in\mathrm O(16)$.
Indeed $T_gT_g^*=T_gT_{g^\dagger}$ has $(1,1)$ block
$\Lm a\Lm{\bar a}+\Lm b\Lm{\bar b}=\bu_8$ but $(1,2)$ block $\Lm a\Lm{\bar c}+\Lm b\Lm{\bar d}$,
which is the left multiplication by $a\bar c+b\bar d=0$ only if associators vanish. An explicit
counterexample is given in Proposition~\ref{prop:counter}(b). Bi-unitarity is a statement about
octonionic entries; orthogonality of $T_g$ is a statement about compositions of left
multiplications.
\end{rem}

\begin{rem}[Reading the operator at the unit]
\label{rem:readunit}
In $\End_\RR(\OO^2)$ one has, by block multiplication, $T_g\Phi(X)T_g^*=
\begin{psmallmatrix}\mathcal A&\mathcal B\\ \mathcal B^{\mathrm t}&\mathcal D\end{psmallmatrix}$ with,
for instance,
\[
\mathcal A=y\bigl(|a|^2-|b|^2\bigr)\bu_8+\Lm a\Lm p\Lm{\bar b}+\Lm b\Lm{\bar p}\Lm{\bar a},
\]
and $\Phi(X)$ as in Definition~\ref{def:Phi}. A composition such as $\Lm a\Lm p\Lm{\bar b}$
is in general not a left multiplication: $a\bigl(p(\bar bu)\bigr)$ differs from
$\bigl(a(p\bar b)\bigr)u$ by $-[a,p\bar b,u]-a[p,\bar b,u]$. Evaluating the blocks at $1$
yields uniformly $\Lp$-bracketed expressions; for the diagonal,
$\mathcal A(1)=y(|a|^2-|b|^2)+a(p\bar b)+b(\bar p\bar a)$, whose imaginary part
$-[a,p,\bar b]$ is in general nonzero. Hence $T_g\Phi(X)T_g^*\notin\Phi(\Herm_2(\OO)_0)$ for
general $g\in\cL_\OO$, and no bracketing rule can be read off from the operator composition:
the admissible rules of Proposition~\ref{prop:admissible}, RLLR/LRRL included, are
conventions on the octonionic side. The two sides agree exactly when $T_g$ normalises
$\Phi(\Herm_2(\OO)_0)$, as for the transvections below.
\end{rem}

\subsection{The north pole}
\label{sec:NP}

\begin{prop}[North pole]
\label{prop:mneqop}
For every $g\in\cL_\OO$, all $2^8$ bracketing assignments give the same matrix
$\mathrm{Eval}(g;X_N)=X(\pi(a,c))$ of Proposition~\ref{prop:hopfmatrix}. If moreover
$T_g\in\Spin(9)$, then $\mathrm{Eval}(g;X_N)=\rho(T_g)X_N$ and
$T_g\Phi(X_N)T_g^*=\Phi\bigl(\mathrm{Eval}(g;X_N)\bigr)$.
\end{prop}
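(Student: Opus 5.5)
The proof has three parts: the bracketing independence at the north pole, the operator identity, and the identification with $\rho(T_g)$.

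\emph{Bracketing independence.} With $X=X_N$ we have $y=1$ and $p=0$, so every triple product in \eqref{eq:triples} has a middle factor $\bar p=0$ or $p=0$. Each such triple therefore vanishes under both bracketings $\Rp$ and $\Lp$. Only the unambiguous binary terms $y(\cdot)$ survive, for every one of the $2^8$ assignments, and they give
\[
\begin{pmatrix}|a|^2-|b|^2 & a\bar c-b\bar d\\ c\bar a-d\bar b & |c|^2-|d|^2\end{pmatrix}.
\]
This is exactly the matrix computed in the proof of Proposition~\ref{prop:hopfmatrix}, which equals $X(\pi(a,c))$ there.

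\emph{Operator identity.} Assume $T_g\in\Spin(9)$. Then $T_g$ is orthogonal, so $T_g^*=T_g^{-1}$, and by the defining property of $\rho$ we get $T_g\Phi(X_N)T_g^*=\Phi(\rho(T_g)X_N)$. It remains to show $\rho(T_g)X_N=X(\pi(a,c))$, which I would do with the intrinsic characterisation of Proposition~\ref{thm:equivariance}(1). Let $\xi_0=\binom10\in\mathbb S^{15}$. Since $\Phi(X_N)\xi_0=\xi_0$, we have $\pi(\xi_0)=X_N$ by uniqueness. Part (2) of that proposition then gives
\[
\rho(T_g)X_N=\rho(T_g)\pi(\xi_0)=\pi(T_g\xi_0).
\]
Finally $T_g\xi_0=\binom{a\cdot1+b\cdot0}{c\cdot1+d\cdot0}=\binom ac$, the first column of $g$. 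Hence $\rho(T_g)X_N=\pi(a,c)$, viewed as the matrix $X(\pi(a,c))=\mathrm{Eval}(g;X_N)$. Substituting into the previous identity yields $T_g\Phi(X_N)T_g^*=\Phi(\mathrm{Eval}(g;X_N))$.

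\emph{Main point to check.} The only delicate step is identifying $T_g\xi_0$ with the first column of $g$ and matching conventions. Proposition~\ref{thm:equivariance} uses $\pi(u,v)=(|u|^2-|v|^2,2u\bar v)$, which is the same formula as in Proposition~\ref{prop:hopfmatrix}, so the first columns agree with no conjugation mismatch. No bracketing issue arises in this step, because $T_g\xi_0$ involves only products with $1$ and $0$.
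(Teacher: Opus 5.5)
Your proposal is correct and follows the paper's proof: the triple products vanish at $X_N$ because each contains $p=0$ or $\bar p=0$, and $\rho(T_g)X_N=\pi(T_g\xi_N)=\pi(a,c)$ comes from Proposition~\ref{thm:equivariance}(2) with $\xi_N=(1,0)$. You also spell out a step the paper leaves implicit: $T_g\in\Spin(9)\subset\SO(16)$ gives $T_g^*=T_g^{-1}$, so the operator identity is the relation $S\Phi(X)S^{-1}=\Phi(\rho(S)X)$ evaluated at $X=X_N$.
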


\begin{proof}
At $X=X_N$ every triple product contains $p=0$ or $\bar p=0$, so all bracketings give zero.
If $T_g\in\Spin(9)$, then $\pi(T_g\xi_N)=\rho(T_g)\pi(\xi_N)$ with $\xi_N=(1,0)$, by
Proposition~\ref{thm:equivariance}; here $\pi(\xi_N)=X_N$ and $T_g\xi_N=(a,c)$.
\end{proof}

\subsection{The compatible case: transvections}
\label{sec:transvections}

\begin{defi}[Transvections]
\label{def:transvection}
For $\omega\in\RR$ and a unit octonion $\mu$, set
\[
g_{\omega,\mu}=\begin{pmatrix}\cos\omega & -\sin\omega\,\mu\\ \sin\omega\,\bar\mu & \cos\omega\end{pmatrix},
\qquad \cT=\{g_{\omega,\mu}\}.
\]
\end{defi}

\begin{rem}[Why ``transvection'']
\label{rem:terminology}
The name refers to the theory of Riemannian symmetric spaces, in which a \emph{transvection} is
a product $s_{\gamma(t)}s_{\gamma(0)}$ of two geodesic symmetries; it translates along the
geodesic $\gamma$ and rotates the plane of $\gamma$ \cite{Helgason1978}. On $\mathbb S^8$ the
geodesic symmetry at $Y$ is $s_Y(X)=2\Bf(X,Y)Y-X=-r_Y X$, so $s_Ys_{X_N}=r_Yr_{X_N}$; by
Theorem~\ref{thm:transvection} below, $\rho(T_{g_{\omega,\mu}})=s_{Y_{\omega,\mu}}s_{X_N}$ is exactly
the transvection of $\mathbb S^8$ along the great circle leaving $X_N$ in the direction $\mu$, by
arc length $2\omega$. This is unrelated to the linear-algebra notion of transvection
(elementary shear $x\mapsto x+\varphi(x)v$ with $\varphi(v)=0$): the maps considered here are
rotations, not shears. We prefer the term to ``elementary rotation'' because it records both the
geometry (translation along a geodesic issuing from the base point) and the factorisation into
two reflections that drives the proofs. We stress that $\cT$ is a set of matrices, not a group:
already in the quaternionic case a product of two transvections of $\mathbb S^4$ is in general not
a transvection, and over $\OO$ the matrix product does not even induce the composition of the
corresponding maps (Proposition~\ref{prop:counter}(c)). It is the operators
$T_{g}$, $g\in\cT$, that generate $\Spin(9)$.
\end{rem}

\begin{theorem}[Exactness on transvections]
\label{thm:transvection}
Let $g=g_{\omega,\mu}$ and $Y_{\omega,\mu}=X(\cos\omega,\sin\omega\,\mu)\in\mathbb S^8$.
\begin{enumerate}
\item $g\in\cL_\OO$ and $T_g=\Phi(Y_{\omega,\mu})\Phi(X_N)\in\Spin(9)$; in particular $T_g\in\SO(16)$.
\item All bracketings of $(gX)g^\dagger$ coincide, for every $X\in\Herm_2(\OO)_0$.
\item $T_g\Phi(X)T_g^*=\Phi\bigl(\mathrm{Eval}(g;X)\bigr)$ and $\mathrm{Eval}(g;X)=\rho(T_g)X$,
explicitly
\begin{equation}
\label{eq:transvrot}
y'=y\cos2\omega-\sin2\omega\,\langle p,\mu\rangle,\qquad
p'=p+\bigl(y\sin2\omega-2\sin^2\!\omega\,\langle p,\mu\rangle\bigr)\mu,
\end{equation}
which is the rotation by the angle $2\omega$ in the plane spanned by $X_N$ and $X(0,\mu)$,
fixing its orthogonal complement. In particular $\mathrm{Eval}(g;\cdot)\in\SO(9)$ preserves $Q$, and
$\mathrm{Eval}(g;X_N)=X(\cos2\omega,\sin2\omega\,\mu)$.
\item $\pi(T_g\xi)=\mathrm{Eval}(g;\pi(\xi))$ for all $\xi\in\mathbb S^{15}$.
\item The operators $T_g$, $g\in\cT$, generate $\Spin(9)$, and $\cT$ acts transitively on $\mathbb S^8$.
\end{enumerate}
\end{theorem}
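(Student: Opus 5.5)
I will prove the five items in order, with the factorisation in (1) carrying most of the weight. For (1), bi-unitarity is immediate from \eqref{eq:constraints1}--\eqref{eq:constraints2}: all entries lie in the complex subalgebra $\RR1+\RR\mu$, which is associative and commutative, and $\mu\bar\mu=1$ gives $a\bar c+b\bar d=\cos\omega\sin\omega\,\mu-\sin\omega\cos\omega\,\mu=0$. The factorisation is a direct block computation with Definition~\ref{def:Phi}:
\[
\Phi(Y_{\omega,\mu})\Phi(X_N)=\begin{pmatrix}\cos\omega\,\bu_8&\sin\omega\,\Lm\mu\\ \sin\omega\,\Lm{\bar\mu}&-\cos\omega\,\bu_8\end{pmatrix}\begin{pmatrix}\bu_8&0\\0&-\bu_8\end{pmatrix}
=\begin{pmatrix}\cos\omega\,\bu_8&-\sin\omega\,\Lm\mu\\ \sin\omega\,\Lm{\bar\mu}&\cos\omega\,\bu_8\end{pmatrix}=T_g .
\]
Here only $\Lm{\lambda\mu}=\lambda\Lm\mu$ for real $\lambda$ is used. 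Since $T_g$ is a product of two unit Clifford vectors, \eqref{eq:Spin9} gives $T_g\in\Spin(9)\subset\SO(16)$.

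For (2), the triples in \eqref{eq:triples} have the form $(x,p,z)$ or $(x,\bar p,z)$, with $x,z\in\RR1+\RR\mu$. Since the associator is real-trilinear and vanishes when an argument is real, it reduces to multiples of associators with two arguments equal to $\mu$ or $\bar\mu=2\Re\mu-\mu$. By alternativity these are multiples of $[\mu,p,\mu]$, $[\mu,p,\bar\mu]$ and similar expressions, all of which vanish because the associator is alternating. So every bracketing coincides.

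For (3), I will compute $T_g\Phi(X)T_g^*$ in the associative algebra $\End_\RR(\OO^2)$ using the factorisation. With $T_g^*=T_g^{-1}=\Phi(X_N)\Phi(Y)$ and Proposition~\ref{prop:clifford}(2) applied twice, as in the discussion after \eqref{eq:Spin9}, this equals $\Phi(r_Yr_{X_N}X)$, so $\rho(T_g)=r_Yr_{X_N}$. The two reflection hyperplanes are orthogonal to $X_N$ and $Y$, and these vectors make angle $\omega$ in the plane spanned by $X_N$ and $X(0,\mu)$. Hence $\rho(T_g)$ is the rotation by $2\omega$ in that plane and fixes its orthogonal complement; writing this out with $\langle p,\mu\rangle$ gives \eqref{eq:transvrot}.

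\textbf{Main obstacle.} The real work in (3) is to identify $\mathrm{Eval}(g;X)$ with this rotation. I would expand $(gX)g^\dagger$ directly, using (2) to bracket freely. Everything reduces to products of $p$ with $\mu,\bar\mu$, and the critical term $\mu\bar p\mu$ is handled by Lemma~\ref{lem:Lx}(3): $\mu\bar p\mu=2\langle\mu,p\rangle\mu-p$. Collecting terms with double-angle formulas should reproduce \eqref{eq:transvrot}. Because $\Phi$ is injective, this in turn gives $T_g\Phi(X)T_g^*=\Phi(\mathrm{Eval}(g;X))$. Evaluating \eqref{eq:transvrot} at $X_N$ gives the last formula in (3).

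Item (4) follows from (3) together with Proposition~\ref{thm:equivariance}(2), since $T_g\in\Spin(9)$. For (5), transitivity follows from $\mathrm{Eval}(g;X_N)=X(\cos2\omega,\sin2\omega\,\mu)$, which covers all of $\mathbb S^8$ as $\omega$ and $\mu$ vary. For generation, I will show that every generator $\Phi(Y)\Phi(Y')$ of \eqref{eq:Spin9} is a product of the $T_g$. Write $\Phi(Y)\Phi(Y')=\bigl(\Phi(Y)\Phi(X_N)\bigr)\bigl(\Phi(X_N)\Phi(Y')\bigr)$. By (1), $\Phi(Y)\Phi(X_N)=T_{g_{\omega,\mu}}$ for $Y=Y_{\omega,\mu}$, and every $Y\in\mathbb S^8$ can be written this way: $y=\cos\omega$ with $\omega\in[0,\pi]$, with the sign absorbed by choosing $\mu$, or by taking $\omega=\pi$ at the south pole. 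Moreover $\Phi(X_N)\Phi(Y')=(\Phi(Y')\Phi(X_N))^{-1}=T_{g'}^{-1}=T_{g_{-\omega',\mu'}}$.
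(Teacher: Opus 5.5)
Your proposal is correct and follows the paper's proof essentially step by step. The steps are: the block factorisation $T_g=\Phi(Y_{\omega,\mu})\Phi(X_N)$; the reflection formula applied twice to get $\rho(T_g)=r_Yr_{X_N}$; the direct expansion of the upper row using $\mu\bar p\mu=2\langle\mu,p\rangle\mu-p$; equivariance for (4); and the splitting $\Phi(Y)\Phi(Y')=\bigl(\Phi(Y)\Phi(X_N)\bigr)\bigl(\Phi(X_N)\Phi(Y')\bigr)$ for (5). The differences are minor. In (2) you use trilinearity over all eight triples, where the paper notes that only $(b,\bar p,\bar c)$ and its mirror have non-real outer factors. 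In (5) your explicit identity $T_{g_{\omega,\mu}}^{-1}=T_{g_{-\omega,\mu}}$ makes precise that every element of $\Spin(9)$ is a product of $T_g$'s with $g\in\cT$, as Corollary~\ref{coro:words} needs.
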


\begin{proof}
Write $\kappa=\cos\omega$, $\sigma=\sin\omega$.
(1) The constraints~\eqref{eq:constraints1}--\eqref{eq:constraints2} reduce to $\kappa^2+\sigma^2=1$
and $\kappa\sigma\mu-\sigma\mu\kappa=0$. A direct block computation gives
$\Phi(Y_{\omega,\mu})\Phi(X_N)=\begin{psmallmatrix}\kappa&\sigma\Lm\mu\\ \sigma\Lm{\bar\mu}&-\kappa\end{psmallmatrix}
\begin{psmallmatrix}\bu&0\\0&-\bu\end{psmallmatrix}=
\begin{psmallmatrix}\kappa&-\sigma\Lm\mu\\ \sigma\Lm{\bar\mu}&\kappa\end{psmallmatrix}=T_g$.
(2) Since $a=d=\kappa$ is real, every triple containing $a$, $\bar a$, $d$ or $\bar d$ is
unambiguous. The only remaining triple is $(b,\bar p,\bar c)$ and its mirror $(c,p,\bar b)$:
with $b=-\sigma\mu$, $\bar c=\sigma\mu$, one gets $[b,\bar p,\bar c]=-\sigma^2[\mu,\bar p,\mu]=0$
by flexibility, and similarly for the mirror.
(3) By (1) and Proposition~\ref{prop:clifford}(2),
$T_g\Phi(X)T_g^*=\Phi(Y)\Phi(X_N)\Phi(X)\Phi(X_N)\Phi(Y)=\Phi(r_Yr_{X_N}X)=\Phi(\rho(T_g)X)$
with $Y=Y_{\omega,\mu}$. On the other hand, evaluating~\eqref{eq:upper} with (2) and
Lemma~\ref{lem:Lx}(3), $\mu\bar p\mu=2\langle\mu,p\rangle\mu-p$:
\[
X'_{11}=y(\kappa^2-\sigma^2)-\kappa\sigma(\mu\bar p+p\bar\mu)=y\cos2\omega-\sin2\omega\langle p,\mu\rangle,
\]
\[
X'_{12}=2y\kappa\sigma\mu-\sigma^2\mu\bar p\mu+\kappa^2p
= y\sin2\omega\,\mu-2\sigma^2\langle p,\mu\rangle\mu+p,
\]
which is~\eqref{eq:transvrot}. Computing $r_Yr_{X_N}X=r_Y X(-y,p)=2\Bf(X(y,-p),Y)Y-X(y,-p)$
with $\Bf(X(y,-p),Y)=\kappa y-\sigma\langle p,\mu\rangle$ gives the same expression. In the
coordinates $(y,\langle p,\mu\rangle)$ this is the rotation matrix of angle $2\omega$, and
the component of $p$ orthogonal to $\mu$ is unchanged.
(4) Proposition~\ref{thm:equivariance}(2) and (3).
(5) Since $Y_{\omega,\mu}$ runs over all of $\mathbb S^8$ and
$\Phi(Y)\Phi(Y')=\bigl(\Phi(Y)\Phi(X_N)\bigr)\bigl(\Phi(Y')\Phi(X_N)\bigr)^{-1}$, the $T_g$
generate the group~\eqref{eq:Spin9}. Transitivity follows from
$\mathrm{Eval}(g_{\omega,\mu};X_N)=X(\cos2\omega,\sin2\omega\,\mu)$.
\end{proof}

\begin{coro}[The $\Spin(9)$ action as composed evaluations]
\label{coro:words}
Every $S\in\Spin(9)$ is a product $S=T_{g_1}\cdots T_{g_k}$ with $g_i\in\cT$, and for any
such factorisation
\[
\rho(S)=\mathrm{Eval}(g_1;\cdot)\circ\cdots\circ\mathrm{Eval}(g_k;\cdot),
\qquad
\pi(S\xi)=\mathrm{Eval}\Bigl(g_1;\mathrm{Eval}\bigl(g_2;\cdots\mathrm{Eval}(g_k;\pi(\xi))\cdots\bigr)\Bigr).
\]
The right-hand side is independent of the chosen factorisation.
\end{coro}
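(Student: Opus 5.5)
The argument combines Theorem~\ref{thm:transvection}(5) for existence, parts~(3)--(4) for the identification with $\rho$, and the homomorphism property of $\rho$ for independence of the factorisation.

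\emph{Existence of a factorisation.} By Theorem~\ref{thm:transvection}(5), the $T_g$ with $g\in\cT$ generate $\Spin(9)$. Hence every $S$ is a word in the $T_g$ and their inverses. Inverses can be absorbed into $\cT$: since $T_{g_{\omega,\mu}}=\Phi(Y_{\omega,\mu})\Phi(X_N)$, one has
\[
T_{g_{\omega,\mu}}^{-1}=\Phi(X_N)\Phi(Y_{\omega,\mu}).
\]
A direct block computation shows that this equals $\begin{psmallmatrix}\cos\omega&\sin\omega\,\Lm\mu\\ -\sin\omega\,\Lm{\bar\mu}&\cos\omega\end{psmallmatrix}=T_{g_{-\omega,\mu}}$. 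Therefore $S=T_{g_1}\cdots T_{g_k}$ with every $g_i\in\cT$, and no inverses are needed.

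\emph{The formula for $\rho(S)$.} Since $\rho:\Spin(9)\to\SO(9)$ is a homomorphism (Section~\ref{sec:Spin9}),
\[
\rho(S)=\rho(T_{g_1})\circ\cdots\circ\rho(T_{g_k}).
\]
By Theorem~\ref{thm:transvection}(3), each factor satisfies $\rho(T_{g_i})=\mathrm{Eval}(g_i;\cdot)$, which gives the first identity.

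\emph{The formula for $\pi(S\xi)$.} Proposition~\ref{thm:equivariance}(2) gives $\pi(S\xi)=\rho(S)\pi(\xi)$. Substituting the composition above yields the nested expression. Alternatively, one can iterate Theorem~\ref{thm:transvection}(4), using that each $T_{g_i}\in\SO(16)$ keeps $\xi$ on $\mathbb S^{15}$.

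\emph{Independence of the factorisation.} Both right-hand sides equal $\rho(S)$, respectively $\rho(S)\pi(\xi)$. These depend only on $S$, since $\rho$ is a well-defined map on the group $\Spin(9)\subset\SO(16)$ and not on words. The point to stress is that the octonionic matrix product $g_1\cdots g_k$ plays no role; only operator products in the associative algebra $\End_\RR(\OO^2)$ enter.

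The only step requiring care is the inverse computation $T_{g_{\omega,\mu}}^{-1}=T_{g_{-\omega,\mu}}$, used to avoid inverses in the words. It is a one-line check with $\Lm\mu\Lm{\bar\mu}=\bu_8$ from Lemma~\ref{lem:Lx}(2). Everything else follows immediately from results already established.
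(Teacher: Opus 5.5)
Your proposal is correct and follows the paper's own argument. The paper's one-line proof also uses only that $\rho$ is a homomorphism and that $\rho(T_{g_i})=\mathrm{Eval}(g_i;\cdot)$ by Theorem~\ref{thm:transvection}(3), with the $\pi(S\xi)$ form coming from Proposition~\ref{thm:equivariance}(2). Your check that $T_{g_{\omega,\mu}}^{-1}=\Phi(X_N)\Phi(Y_{\omega,\mu})=T_{g_{-\omega,\mu}}$ adds a detail the paper leaves implicit: Theorem~\ref{thm:transvection}(5) only gives generation as a group, while the corollary asserts an inverse-free product.
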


\begin{proof}
$\rho$ is a homomorphism, and $\rho(T_{g_i})=\mathrm{Eval}(g_i;\cdot)$ by Theorem~\ref{thm:transvection}(3).
\end{proof}

\begin{rem}[Relation to the work of Manogue, Schray and Dray]
\label{rem:MSD}
The entries of $g_{\omega,\mu}$ lie in the complex subalgebra spanned by $1$ and $\mu$, so
$g_{\omega,\mu}$ is \emph{compatible} in the sense of \cite{ManogueSchray1993,ManogueDray1999}, and
part (2) of Theorem~\ref{thm:transvection} is an instance of their observation that
$X\mapsto MXM^\dagger$ is unambiguous for such $M$. Likewise, Corollary~\ref{coro:words} is the compact
counterpart of their result that nested compatible transformations generate the Lorentz group
$\SO(9,1)$ of $\Herm_2(\OO)$; in their construction the nesting depth can be bounded by two,
the nesting being needed only for transverse rotations that permute imaginary units, which they
generate by ``flips'' \cite{ManogueDray1999,DrayManogue2015}. The contribution of
Theorem~\ref{thm:transvection} lies in the identification $T_{g_{\omega,\mu}}=\Phi(Y_{\omega,\mu})\Phi(X_N)$,
which makes the link with the Clifford description of $\Spin(9)$ (Section~\ref{sec:Spin9})
immediate, and in the interpretation of $\rho(T_{g_{\omega,\mu}})$ as a transvection of
$\mathbb S^8$ (Remark~\ref{rem:terminology}).
\end{rem}

Thus statement (S) of the Introduction is realised by the RLLR/LRRL rule in the following
precise sense: the $\Spin(9)$ action is obtained by \emph{nesting} interlaced evaluations of
transvections, not by a single evaluation of an octonionic matrix product; see
Proposition~\ref{prop:counter}(c) and Theorem~\ref{thm:blocks}.

\begin{rem}[Diagonal elements]
\label{rem:diagonal}
For $g=\mathrm{diag}(u,v)$ with $|u|=|v|=1$, one has $g\in\cL_\OO$, $T_g\in\SO(16)$, and
$\mathrm{Eval}(g;X)=X\bigl(y,(up)\bar v\bigr)$, which preserves $Q$ and lies in $\SO(9)$.
However, the $(1,2)$ block of $T_g\Phi(X)T_g^*$ is $\Lm u\Lm p\Lm{\bar v}$, which is not, in
general, a left-multiplication operator. Consequently, although $T_g$ is orthogonal, it need not
belong to $\Spin(9)$ (it does, for instance, when $u=v=1$), and the Clifford-system interpretation
does not extend to arbitrary diagonal bi-unitary matrices. The $\Rp$ bracket in $(ap)\bar d$
matters here: the other admissible choice gives $u(p\bar v)$, which is also isometric but
different.
\end{rem}

\subsection{Main theorem: quaternionic reduction and the $5+4$ splitting}
\label{sec:blocks}

Away from the transvections, the behaviour of the evaluation is controlled by a simple
structural fact: every octonionic bi-unitary matrix is in fact quaternionic, for a suitable
quaternion subalgebra depending on the matrix.

\begin{lem}[Quaternionic reduction]
\label{lem:quatred}
For every $g=\begin{psmallmatrix}a&b\\c&d\end{psmallmatrix}\in\cL_\OO$ there is a quaternion
subalgebra $\HH'\subset\OO$ containing $a,b,c,d$. Consequently
\[
\cL_\OO=\bigcup_{\HH'\subset\OO}\mathrm{Sp}(2)_{\HH'},\qquad
\mathrm{Sp}(2)_{\HH'}:=\cL_{\HH'}\cong\mathrm{Sp}(2),
\]
the union running over the quaternion subalgebras of $\OO$. If $[a,b]:=ab-ba\neq0$, then
$\HH'$ is unique: it is the subalgebra generated by $a$ and $b$.
\end{lem}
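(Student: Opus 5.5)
The plan is to work with the two orthogonality relations in \eqref{eq:constraints1}--\eqref{eq:constraints2}, $a\bar c+b\bar d=0$ and $\bar ab+\bar cd=0$, together with Lemma~\ref{lem:equalnorms}. These should show that $c$ and $d$ are forced into the subalgebra generated by $a$ and $b$ whenever that subalgebra is quaternionic. The degenerate configurations are handled separately.

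\emph{Degenerate cases.} If $b=0$, then Lemma~\ref{lem:equalnorms} gives $c=0$ and $|a|=|d|=1$. Any two octonions lie in some quaternion subalgebra (Artin's theorem, enlarged if necessary), so $a,d$ do. If $a=0$, then $d=0$, and the same argument applies to $b,c$.

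\emph{Generic case.} Assume $a,b,c,d\neq0$. Solving $a\bar c+b\bar d=0$ for $\bar d$, using alternativity in the form $b^{-1}(bx)=x$, gives $\bar d=-b^{-1}(a\bar c)$. Conjugating, $d=-(c\bar a)\bar b/|b|^2$. Substituting this into $\bar cd=-\bar ab$ yields a single equation for $c$:
\[
\bar c\bigl((c\bar a)\bar b\bigr)=|b|^2\,\bar a b .
\]
Now suppose $[a,b]\neq0$, and let $\HH'$ be the subalgebra generated by $a$ and $b$; it is associative by Artin's theorem and $4$-dimensional. Decompose $\OO=\HH'\oplus\ell'\HH'$ with $\ell'$ a unit orthogonal to $\HH'$, and write $c=c_1+\ell'c_2$ with $c_1,c_2\in\HH'$. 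Expand the left-hand side with the Cayley--Dickson rule \eqref{eq:CD} relative to $\HH'$. Then split the equation into its $\HH'$ and $\ell'\HH'$ components, and add the norm constraints $|c|^2=|b|^2$ and $|a|^2+|c|^2=1$. In the $\ell'\HH'$ part, the multiplications by $\bar a$ and $\bar b$ act through \eqref{eq:CD} as $c_2\mapsto c_2\bar a$ versus $c_2\mapsto \bar a c_2$ (conjugations and orders flip). The mismatch between the two should therefore be proportional to $[a,b]$, or to $[\bar a,\bar b]$, acting on $c_2$. The goal is to show that this forces $c_2=0$ when $[a,b]\neq0$: for instance by showing $|c_2|^2|[a,b]|^2=0$ after pairing the $\HH'$-component with a suitable quantity and using the norm identities.

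I expect this expansion to be the main obstacle. I would first try the simplest case $c_1=0$ to see the commutator appear. If the direct approach is messy, a fallback is to use the column relation $\bar ab+\bar cd=0$ symmetrically and compare the two resulting expressions for $d$. Once $c\in\HH'$ is established, $d=-(c\bar a)\bar b/|b|^2\in\HH'$ follows at once.

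\emph{Commuting case.} If $[a,b]=0$, then $a,b$ lie in a common complex subalgebra $\CC'$, or both are real. Take $\HH'$ to be the quaternion subalgebra generated by $c$ and a nonreal element of $\CC'$ (any quaternion subalgebra containing $c$ if $\CC'\subset\RR$). Then $a,b,c\in\HH'$, and the formula for $d$ places $d\in\HH'$ as well.

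\emph{Consequences.} Once all four entries lie in $\HH'$, the constraints \eqref{eq:Ldef} are statements in the associative algebra $\HH'\cong\HH$. Hence $g\in\cL_{\HH'}\cong\mathrm{Sp}(2)$, which gives the inclusion $\cL_\OO\subset\bigcup_{\HH'}\mathrm{Sp}(2)_{\HH'}$; the reverse inclusion is trivial. For uniqueness, suppose $[a,b]\neq0$. Then $a$ and $b$ generate a non-commutative, hence $4$-dimensional, subalgebra, so any quaternion subalgebra containing $a$ and $b$ must coincide with it.
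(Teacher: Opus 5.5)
Your degenerate cases, commuting case, uniqueness argument and passage to $\bigcup_{\HH'}\mathrm{Sp}(2)_{\HH'}$ are fine and parallel the paper. There is a genuine gap in the generic case with $[a,b]\neq0$, which is where the content of the lemma lies, and it has two layers.

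First, there is a conjugation slip. Since $\overline{b^{-1}}=b/|b|^2$, conjugating $\bar d=-b^{-1}(a\bar c)$ gives $d=-(c\bar a)\,b/|b|^2$, not $-(c\bar a)\,\bar b/|b|^2$. For the transvection $g_{\pi/4,\ii}$, where $a=d=1/\sqrt2$ and $b=c=-\ii/\sqrt2$, your formula returns $-d$. Your displayed equation $\bar c\bigl((c\bar a)\bar b\bigr)=|b|^2\bar ab$ is therefore false. Worse, when $[a,b]\neq0$ it has no solution $c\in\HH'$ at all: for $c\in\HH'$ its left side is $|c|^2\bar a\bar b$, which forces $\bar b=b$. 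So expanding it in Cayley--Dickson components cannot yield $c_2=0$.

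Second, and independently, the decisive step is never argued. ``The mismatch should be proportional to $[a,b]$'' and ``the goal is to show $|c_2|^2|[a,b]|^2=0$'' describe the desired outcome, not a mechanism. You flag this yourself as the main obstacle.

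The missing idea is that the corrected condition is not a genuinely quadratic equation in $c$ but the vanishing of a single associator. Write $(c\bar a)b=c(\bar ab)+[c,\bar a,b]$, and use left alternativity $\bar c(cx)=|c|^2x$ together with $|c|=|b|$. The corrected equation $\bar c\bigl((c\bar a)b\bigr)=|b|^2\bar ab$ then collapses to $\bar c\,[c,\bar a,b]=0$, hence $[c,\bar a,b]=0$. Since the associator is alternating and $\bar a=2\Re(a)-a$, this is $[a,b,c]=0$.

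Your own fallback gets there even faster. The relation $\bar ab+\bar cd=0$ gives $d=-c(\bar ab)/|c|^2$, and equating this with $d=-(c\bar a)b/|b|^2$ gives $(c\bar a)b=c(\bar ab)$ directly.

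Then write $c=c_1+\ell'c_2$. You have $[a,b,c_1]=0$ because $\HH'$ is associative. The relations $x(\ell'q)=\ell'(\bar xq)$ and $(\ell'q)x=\ell'(xq)$ for $x,q\in\HH'$ give $[a,b,\ell'c_2]=\ell'\bigl([\bar b,\bar a]\,c_2\bigr)$. Since $[\bar b,\bar a]=[b,a]\neq0$ and $\HH'$ has no zero divisors, $c_2=0$, and then $d\in\HH'$ follows from the corrected formula.

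This is exactly the paper's proof with the roles of $c$ and $d$ interchanged. The paper eliminates $c$ between $a\bar c+b\bar d=0$ and $\bar cd=-\bar ab$ to obtain $[\bar a,b,\bar d]=0$, i.e.\ $[a,b,d]=0$. It then concludes $d\in\HH'$ by the same Cayley--Dickson computation, and $c\in\HH'$ from the formula for $c$.
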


\begin{proof}
If $a=0$, then $d=0$, $|b|=|c|=1$, and $b,c$ generate an associative subalgebra (Artin), of
dimension at most $4$; similarly if $b=0$. Let $a,b\neq0$, so $c,d\neq0$ and $|d|=|a|$. By
alternativity, $a\bar c=-b\bar d$ gives $\bar c=-a^{-1}(b\bar d)$, and $\bar cd=-\bar ab$ gives
$\bar c=-(\bar ab)d^{-1}$. Since $a^{-1}=\bar a/|a|^2$ and $d^{-1}=\bar d/|a|^2$, equating yields
$\bar a(b\bar d)=(\bar ab)\bar d$, i.e.\ $[\bar a,b,\bar d]=0$, i.e.\ $[a,b,d]=0$. Suppose first
$[a,b]\neq0$, and let $\HH'$ be the quaternion subalgebra generated by $a,b$. Choose a unit
$\ell'\perp\HH'$, so that $\OO=\HH'\oplus\ell'\HH'$ with the multiplication rule~\eqref{eq:CD}
(with $\ell'$ in place of $\ell$) \cite{SpringerVeldkamp,Baez}. For $x\in\HH'$ and $q\in\HH'$ this
rule gives
\begin{equation}
\label{eq:CDrel}
x(\ell'q)=\ell'(\bar xq),\qquad (\ell'q)x=\ell'(xq).
\end{equation}
Hence $(ab)(\ell'q)=\ell'(\bar b\bar aq)$ and $a(b(\ell'q))=\ell'(\bar a\bar bq)$, so
$[a,b,\ell'q]=\ell'\bigl([\bar b,\bar a]q\bigr)$, which vanishes only for $q=0$ because
$[\bar b,\bar a]=[b,a]\neq0$ and $\HH'$ is a division algebra. Thus $[a,b,d]=0$ forces $d\in\HH'$,
and then $c=-\overline{a^{-1}(b\bar d)}\in\HH'$. If $[a,b]=0$, then $a$ and $b$ lie in a common
subalgebra $\CC'$ of dimension at most $2$; let $\HH'$ be a quaternion subalgebra containing
$\CC'$ and $d$ (which exists since two elements generate an associative subalgebra of
dimension at most $4$); then $c\in\HH'$ by the same formula. The union formula follows, since
the constraints~\eqref{eq:constraints1}--\eqref{eq:constraints2} for entries in $\HH'$ are those
of $\mathrm{Sp}(2)$.
\end{proof}

\begin{defi}[The four determinants]
\label{def:Delta}
For $g=\begin{psmallmatrix}a&b\\c&d\end{psmallmatrix}$ put
\[
\Delta_{\Lp\Rp}(g)=ad-cb,\qquad \Delta_{\Rp\Lp}(g)=da-bc,\qquad
\Delta_{\Lp\Lp}(g)=da-cb,\qquad \Delta_{\Rp\Rp}(g)=ad-bc .
\]
\end{defi}

\begin{theorem}[$5+4$ splitting of the evaluation]
\label{thm:blocks}
Let $g\in\mathrm{Sp}(2)_{\HH'}\subset\cL_\OO$, choose a unit $\ell'\perp\HH'$, and decompose
$p\in\OO$ as $p=p_1+\ell'q$ with $p_1,q\in\HH'$. Then, for each admissible rule
$\epsilon\in\{\Lp\Rp,\Rp\Lp,\Lp\Lp,\Rp\Rp\}$,
\begin{equation}
\label{eq:blocks}
\mathrm{Eval}_\epsilon\bigl(g;X(y,p_1+\ell'q)\bigr)
= g\,X(y,p_1)\,g^\dagger\;+\;X\bigl(0,\ \ell'\,(\overline{\Delta_\epsilon(g)}\,q)\bigr),
\end{equation}
where $gX(y,p_1)g^\dagger$ is the ordinary (associative) quaternionic expression. Consequently:
\begin{enumerate}
\item $\mathrm{Eval}_\epsilon(g;\cdot)$ preserves the orthogonal decomposition
$\Herm_2(\OO)_0=(\RR\oplus\HH')\oplus\ell'\HH'\cong\RR^5\oplus\RR^4$; on $\RR\oplus\HH'$ it is the
standard action of $\mathrm{Sp}(2)_{\HH'}\cong\Spin(5)$, independent of $\epsilon$, and on
$\ell'\HH'$ it is $\ell'q\mapsto\ell'(\overline{\Delta_\epsilon(g)}\,q)$;
\item its singular values are $1$, with multiplicity $5$, and $|\Delta_\epsilon(g)|\in[0,1]$, with
multiplicity $4$; in particular $Q(\mathrm{Eval}_\epsilon(g;X))\le Q(X)$;
\item $\mathrm{Eval}_\epsilon(g;\cdot)\in\mathrm O(9)$ if and only if $|\Delta_\epsilon(g)|=1$, and then it
lies in $\SO(9)$.
\end{enumerate}
\end{theorem}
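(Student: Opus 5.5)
The plan is to use linearity of $\mathrm{Eval}_\epsilon(g;\cdot)$ in $X$ and split $X(y,p_1+\ell'q)=X(y,p_1)+X(0,\ell'q)$. For the first piece, all entries of $g$ and $p_1$ lie in the associative subalgebra $\HH'$ (Lemma~\ref{lem:quatred}). Hence every bracketing of every triple in~\eqref{eq:triples} coincides, and $\mathrm{Eval}_\epsilon(g;X(y,p_1))=gX(y,p_1)g^\dagger$ is the ordinary quaternionic adjoint action. This is independent of $\epsilon$, and it is the standard $\Spin(5)$ action on $\RR\oplus\HH'$, exactly as in case (i) after Proposition~\ref{prop:hopfmatrix}. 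Everything thus reduces to computing $\mathrm{Eval}_\epsilon(g;X(0,\ell'q))$.

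For the second piece, $y=0$, so only the eight triple products survive, with $p=\ell'q$ and $\bar p=-\ell'q$ (since $\ell'q$ is imaginary). Each triple is evaluated with the rules~\eqref{eq:CDrel}, $x(\ell'q)=\ell'(\bar xq)$ and $(\ell'q)x=\ell'(xq)$ for $x\in\HH'$. For example,
\[
(ap)\bar d=(\ell'(\bar aq))\bar d=\ell'(\bar d\bar a q),\qquad
a(p\bar d)=a(\ell'(\bar dq))=\ell'(\bar a\bar d q),
\]
and similarly $b(\bar p\bar c)=-\ell'(\bar b\bar c q)$ and $(b\bar p)\bar c=-\ell'(\bar c\bar b q)$.

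For the $(1,2)$ entry under rule $\epsilon=(\epsilon_1,\epsilon_2)$ this gives $\ell'\bigl((\overline{\,\cdot\,}-\overline{\,\cdot\,})q\bigr)$. For instance, for $\Lp\Rp$ it is $\ell'\bigl((\bar d\bar a-\bar b\bar c)q\bigr)=\ell'(\overline{ad-cb}\,q)$, matching $\Delta_{\Lp\Rp}$. The other three cases should produce $\overline{da-bc}$, $\overline{da-cb}$ and $\overline{ad-bc}$ in the same way, and I will check each against Definition~\ref{def:Delta}.

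For the diagonal, Proposition~\ref{prop:admissible} shows that the $(1,1)$ entry is $2\Re(b\bar p\bar a)$. Since $b\bar p\bar a=-\ell'(\bar a\bar b q)$ lies in $\ell'\HH'\perp1$, its real part is zero; the same holds for the $(2,2)$ entry. The $(2,1)$ entry is the conjugate of the $(1,2)$ entry by admissibility. Together these give~\eqref{eq:blocks}.

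For the consequences, $\RR\oplus\HH'$ and $\ell'\HH'$ are orthogonal in $\Herm_2(\OO)_0$, and both pieces map each summand into itself. On $\ell'\HH'$ the map $q\mapsto\overline{\Delta}q$ is $|\Delta|$ times an orthogonal map, because $\HH'$ is normed. This gives the singular values $1^{(5)}$ and $|\Delta|^{(4)}$.

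To show $|\Delta_\epsilon(g)|\le1$, I will use the Dieudonné determinant of Remark~\ref{rem:connectedness} as an upper bound. The simplest route is the estimate $|ad-cb|\le|a||d|+|b||c|=\cos^2\omega+\sin^2\omega=1$, which follows from Lemma~\ref{lem:equalnorms} and the triangle inequality.

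Item (3) then follows, since the map is orthogonal exactly when every singular value is $1$. The determinant of the map is $1\cdot\det(L_{\bar\Delta}|_{\HH'})=|\Delta|^4>0$, so in that case it lies in $\SO(9)$. The $\SO(5)$ block has determinant $1$ because $\mathrm{Sp}(2)$ is connected.

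The main obstacle I expect is the sign and ordering bookkeeping when applying~\eqref{eq:CDrel} to the eight triples, in particular checking that each of the four rules produces exactly the corresponding determinant in Definition~\ref{def:Delta} with the conjugation placed correctly. I will verify the result against the witness values~\eqref{eq:hfour}.
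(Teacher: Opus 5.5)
Your proposal is correct and follows the paper's proof essentially step for step. You split linearly into the associative $\RR\oplus\HH'$ piece and the $X(0,\ell'q)$ piece, and your four triple formulas via~\eqref{eq:CDrel} and the resulting determinants are exactly the paper's; the remaining steps are the vanishing real parts on the diagonal, the bound $|\Delta_\epsilon(g)|\le|a||d|+|b||c|=1$, and connectedness of $\mathrm{Sp}(2)$ together with $\det L_{\bar\Delta}=|\Delta|^4$ for the $\SO(9)$ claim. The aside about bounding $|\Delta_\epsilon(g)|$ by the Dieudonn\'e determinant is superfluous and would need its own argument, so drop it; the triangle inequality you settle on is what the paper uses.
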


\begin{proof}
For $p\in\HH'$ all entries lie in the associative algebra $\HH'$, so every bracketing gives
$gX(y,p_1)g^\dagger$, the $\mathrm{Sp}(2)_{\HH'}$-action on $\RR\oplus\HH'$, which is isometric
(Proposition~\ref{prop:Qpres}(1)). By linearity it remains to treat $X=X(0,\ell'q)$, for which
$\bar p=-\ell'q$. Using~\eqref{eq:CDrel},
\[
(b\bar p)\bar c=-\ell'(\bar c\bar bq),\quad b(\bar p\bar c)=-\ell'(\bar b\bar cq),\quad
(ap)\bar d=\ell'(\bar d\bar aq),\quad a(p\bar d)=\ell'(\bar a\bar dq).
\]
For RLLR/LRRL ($\epsilon=\Lp\Rp$) this gives $X'_{12}=\ell'\bigl((\bar d\bar a-\bar b\bar c)q\bigr)
=\ell'\bigl(\overline{ad-cb}\,q\bigr)$, and the other three rules give the other three
determinants in the same way. All diagonal triples, e.g.\ $(b\bar p)\bar a=-\ell'(\bar a\bar bq)$,
lie in $\ell'\HH'$ and have zero real part, so $y'=0$. This proves~\eqref{eq:blocks} and (1).
For (2), left multiplication by $m\in\HH'$ on $\HH'$ is $|m|$ times an isometry, and
$|\Delta_\epsilon(g)|\le|a||d|+|b||c|=\cos^2\omega+\sin^2\omega=1$ by Lemma~\ref{lem:equalnorms}.
For (3), $\mathrm{Sp}(2)_{\HH'}$ is connected and left multiplication by a unit quaternion lies in
$\SO(4)$.
\end{proof}

\begin{coro}[Isometry criterion and special cases]
\label{coro:iso}
Let $g=\begin{psmallmatrix}a&b\\c&d\end{psmallmatrix}\in\cL_\OO$.
\begin{enumerate}
\item The RLLR/LRRL evaluation of $g$ is an isometry if and only if the column
determinant of $g$ has modulus one: $|ad-cb|=1$.
\item For transvections, $\Delta_\epsilon(g_{\omega,\mu})=1$ for all four rules; for diagonal
$g=\mathrm{diag}(u,v)$, $|\Delta_\epsilon(g)|=|u||v|=1$.
\item For the special form~\eqref{eq:gspecial},
\[
|\Delta_\epsilon(g)|^2=1-\bigl|[a,b]\bigr|^2\qquad\text{for all four rules,}
\]
so $\mathrm{Eval}_\epsilon(g;\cdot)$ is an isometry if and only if $ab=ba$, and it is singular, with
a $4$-dimensional kernel, if and only if $|[a,b]|=1$.
\end{enumerate}
\end{coro}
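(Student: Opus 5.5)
Part (1) is a direct specialisation of Theorem~\ref{thm:blocks}. By Lemma~\ref{lem:quatred}, $g$ lies in some $\mathrm{Sp}(2)_{\HH'}$. Item (3) of Theorem~\ref{thm:blocks} with $\epsilon=\Lp\Rp$, where $\Delta_{\Lp\Rp}(g)=ad-cb$ by Definition~\ref{def:Delta}, then gives the criterion at once. So the work lies in computing $\Delta_\epsilon$ in the two families. In both cases all four entries lie in an associative subalgebra, so each determinant is an honest quaternionic expression.

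For (2), with $\kappa=\cos\omega$, $\sigma=\sin\omega$, we have $a=d=\kappa$, $b=-\sigma\mu$, $c=\sigma\bar\mu$. Then $cb=bc=-\sigma^2\mu\bar\mu=-\sigma^2$, while $ad=da=\kappa^2$. Hence all four determinants equal $\kappa^2+\sigma^2=1$. For $g=\mathrm{diag}(u,v)$ we have $b=c=0$, so $\Delta_\epsilon\in\{uv,vu\}$ and $|\Delta_\epsilon|=|u||v|=1$ by multiplicativity of the norm.

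For (3), we have $d=\bar a$ and $c=-\bar\alpha\bar b\alpha$. All entries lie in the associative subalgebra generated by $a$ and $b$ (Artin), so we may compute associatively. Put $r=|a|$ and $s=|b|$, with $r^2+s^2=1$, so that $a=r\alpha$. Then
\[
\Delta_{\Lp\Rp}=\Delta_{\Lp\Lp}=r^2+\bar\alpha\bar b\alpha b,\qquad
\Delta_{\Rp\Lp}=\Delta_{\Rp\Rp}=r^2+b\bar\alpha\bar b\alpha .
\]
The second summand $w$ has $|w|=s^2$, so $|\Delta|^2=r^4+s^4+2r^2\Re(w)$. On the other side,
\[
1-|[a,b]|^2=(r^2+s^2)^2-\bigl(2r^2s^2-2\Re(ab\,\overline{ba})\bigr)=r^4+s^4+2\Re(ab\bar a\bar b).
\]
It therefore suffices to check that $r^2\Re(\bar\alpha\bar b\alpha b)=\Re(\bar a\bar b ab)$ and $r^2\Re(b\bar\alpha\bar b\alpha)=\Re(b\bar a\bar b a)$ both equal $\Re(ab\bar a\bar b)$. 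This follows from cyclic invariance of the real part~\eqref{eq:real-cyclic}: $\Re(\bar a\bar b ab)=\Re(b\bar a\bar b a)=\Re(ab\bar a\bar b)$. This bookkeeping of conjugates and cyclic shifts is the one place where care is needed, and I expect it to be the main (if modest) obstacle.

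The final assertions then follow from Theorem~\ref{thm:blocks}(2)--(3). The evaluation is an isometry iff $|\Delta_\epsilon|=1$, i.e.\ iff $[a,b]=0$. It is singular iff $\Delta_\epsilon=0$, i.e.\ iff $|[a,b]|=1$. In that case the kernel is all of $\ell'\HH'\cong\RR^4$, while the $\RR^5$ block stays injective.
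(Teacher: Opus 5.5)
Your proposal is correct and follows essentially the paper's proof: (1) and (2) come from specialising Theorem~\ref{thm:blocks} and direct substitution, and (3) from computing $\Delta_\epsilon$ inside the associative subalgebra generated by $a,b$ and comparing $|\Delta_\epsilon|^2$ with $1-|[a,b]|^2$. The differences are cosmetic. You match the two sides by expanding both and using cyclic invariance of $\Re$, whereas the paper uses $\Re\bigl((\beta\alpha)^{-1}(\alpha\beta)\bigr)=\langle\beta\alpha,\alpha\beta\rangle=1-\tfrac12|[\alpha,\beta]|^2$; and you treat the second pair $\Delta_{\Rp\Lp}=\Delta_{\Rp\Rp}$ explicitly, where the paper says the other rules are handled ``identically''.
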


\begin{proof}
(1) and (2) follow from Theorem~\ref{thm:blocks} by direct substitution: for $g_{\omega,\mu}$,
$cb=bc=-\sin^2\omega$ and $a=d=\cos\omega$. For (3), write $a=|a|\alpha$, $b=|b|\beta$; the entries
lie in $\HH'$, which is associative, and $d=\bar a$, $c=-\bar\alpha\bar b\alpha$. For
$\epsilon=\Lp\Rp$, $\Delta=a\bar a-cb=|a|^2+|b|^2\,\bar\alpha\bar\beta\alpha\beta$, and
$\bar\alpha\bar\beta\alpha\beta=(\beta\alpha)^{-1}(\alpha\beta)$ has real part
$\langle\beta\alpha,\alpha\beta\rangle=1-\tfrac12|[\alpha,\beta]|^2$. Hence
$|\Delta|^2=(|a|^2+|b|^2)^2-|a|^2|b|^2|[\alpha,\beta]|^2=1-|[a,b]|^2$. The other three rules are
treated identically. Finally $|[a,b]|\le2|a||b|\le1$.
\end{proof}

\begin{rem}[An intrinsic reading of the RLLR/LRRL rule]
\label{rem:coldet}
The determinant attached to RLLR/LRRL, $\Delta_{\Lp\Rp}(g)=ad-cb$, is the \emph{column
determinant} of $g$, obtained by expanding along columns while keeping the column order
(first-column entries on the left); $\Delta_{\Rp\Rp}(g)=ad-bc$ is the corresponding row
determinant (see e.g.~\cite{Aslaksen1996} for such non-commutative determinants). The same
expression $ad-cb$ appears as the right pseudo-determinant $\Delta_R$ of the interlaced product
in~\cite{gazeau26}. Thus the four admissible rules, which Hermiticity cannot distinguish, are
distinguished metrically by four inequivalent $2\times2$ determinants over $\HH'$, and the
witness $h$ shows that they really differ: $\Delta_{\Lp\Rp}(h)=\ell\ii$ is unimodular, so the
RLLR/LRRL evaluation of $h$ is an isometry, whereas $|\Delta_{\Rp\Lp}(h)|=|\Delta_{\Lp\Lp}(h)|=\tfrac1{\sqrt2}$
and $\Delta_{\Rp\Rp}(h)=0$; compare~\eqref{eq:hfour}.
\end{rem}

\begin{prop}[Explicit examples]
\label{prop:counter}
In the convention~\eqref{eq:CD} with $\ell^2=-1$, let
\begin{equation}
\label{eq:g0}
g_0=\begin{pmatrix}\frac{1+\ii}{2} & \frac{\ell}{\sqrt2}\\[2pt] \frac{\ell\ii}{\sqrt2} & \frac{1-\ii}{2}\end{pmatrix},
\qquad
g_1=g_{\pi/4,\,\ii},\quad g_2=g_{\pi/4,\,\ell}\in\cT,\qquad
g_s=\frac1{\sqrt2}\begin{pmatrix}\ii&\mathrm j\\-\mathrm j&-\ii\end{pmatrix}.
\end{equation}
\begin{enumerate}
\item[(a)] $g_0$ is of the special form~\eqref{eq:gspecial}, with $a=\frac{1+\ii}2$,
$b=\frac{\ell}{\sqrt2}$, $|[a,b]|^2=\tfrac12$. Every admissible evaluation fails to preserve
$Q$: $E_1(g_0;X(0,\mathrm j))=X\bigl(0,\tfrac{\mathrm j-\mathrm k}{2}\bigr)$ and
$E_2(g_0;X(0,\mathrm j))=X\bigl(0,\tfrac{\mathrm j+\mathrm k}{2}\bigr)$, both with $Q=\tfrac12$.
\item[(b)] $T_{g_0}\notin\mathrm O(16)$: $T_{g_0}T_{g_0}^*\binom{0}{\mathrm j}
=\binom{(\ell\mathrm j+\ell\mathrm k)/\sqrt2}{\mathrm j}$, a vector of norm $\sqrt2$.
\item[(c)] $g_1,g_2$ lie in $\mathrm{Sp}(2)_{\HH'}$ with $\HH'$ spanned by $1,\ii,\ell,\ell\ii$, hence so
does $g_1g_2=\frac12\begin{psmallmatrix}1-\ell\ii & -\ii-\ell\\ -\ii-\ell & 1-\ell\ii\end{psmallmatrix}$.
But $\Delta_\epsilon(g_1g_2)=\tfrac12(1-\ell\ii)$ for all four rules, of modulus $\tfrac1{\sqrt2}$,
whereas $\Delta_\epsilon(g_1)=\Delta_\epsilon(g_2)=1$. Hence, for every admissible rule,
$\mathrm{Eval}(g_1g_2;\cdot)$ is not an isometry and differs from
$\mathrm{Eval}(g_1;\cdot)\circ\mathrm{Eval}(g_2;\cdot)$; for instance
$\mathrm{Eval}(g_1g_2;X(0,\mathrm j))=X\bigl(0,\tfrac{\mathrm j+\ell\mathrm k}{2}\bigr)$.
\item[(d)] $g_s$ is a quaternionic element of $\mathrm{Sp}(2)$ (the special form with $a=\ii/\sqrt2$,
$b=\mathrm j/\sqrt2$), and $\Delta_\epsilon(g_s)=0$ for all four rules: every admissible evaluation of
$g_s$ annihilates the $4$-dimensional subspace $X(0,\ell\HH)$, while acting on $\RR\oplus\HH$ by
the usual $\SO(5)$ rotation.
\end{enumerate}
\end{prop}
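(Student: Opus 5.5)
The plan is to treat the four parts as explicit verifications, reducing each to Theorem~\ref{thm:blocks} or Corollary~\ref{coro:iso} wherever possible, so that only short computations in the Cayley--Dickson convention~\eqref{eq:CD} remain.

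For (a), I first check that $g_0$ has the special form~\eqref{eq:gspecial}. Take $\alpha=(1+\ii)/\sqrt2$ and $b=\ell/\sqrt2$, and compute $c=-\bar\alpha\bar b\alpha$ using $q\ell=\ell\bar q$. This gives $-\bar\alpha(-\ell)\alpha/\sqrt2=\ell(\alpha\alpha)/\sqrt2=\ell\ii/\sqrt2$, which is the stated entry; also $d=\bar a$. Next, $[a,b]=ab-ba=\tfrac1{2\sqrt2}(\ell\overline{(1+\ii)}-\ell(1+\ii))=-\ell\ii/\sqrt2$, so $|[a,b]|^2=\tfrac12$. Corollary~\ref{coro:iso}(3) then gives $|\Delta_\epsilon|^2=\tfrac12$, and hence non-isometry for every admissible rule. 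For the explicit values I use $p=\mathrm j\in\HH'$, where $\HH'$ is the quaternion subalgebra generated by $a,b$ (spanned by $1,\ii,\ell,\ell\ii$). Then $\mathrm j=\ell'q$ with $\ell'=\mathrm j$, say, and $q=1$. By~\eqref{eq:blocks} the output is $X(0,\mathrm j\,\overline{\Delta_\epsilon}\cdot1)$, so it suffices to compute $\Delta_{\Lp\Rp}=ad-cb$ and $\Delta_{\Rp\Lp}=da-bc$. A faster alternative is to evaluate~\eqref{eq:upper} directly on the four triples; I would do both as a cross-check. The result must be $\tfrac{\mathrm j\mp\mathrm k}2$.

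For (b), I apply $T_{g_0}^*=T_{g_0^\dagger}$ and then $T_{g_0}$ to $(0,\mathrm j)$. This means computing nested left multiplications such as $a(\bar c\,\mathrm j)+b(\bar d\,\mathrm j)$. The first component is nonzero only because of associators, and I expect it to be $(\ell\mathrm j+\ell\mathrm k)/\sqrt2$. The second component reduces to $|c|^2\mathrm j+|d|^2\mathrm j=\mathrm j$. This is routine but must be done carefully in the basis; it is the main place where sign errors are likely.

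For (c), note that $g_1=g_{\pi/4,\ii}$ and $g_2=g_{\pi/4,\ell}$ have entries in the span of $1,\ii,\ell,\ell\ii$, which is closed under multiplication, so their matrix product stays in $\mathrm{Sp}(2)_{\HH'}$. I then multiply the matrices, using $\ii\ell=\ell\bar\ii=-\ell\ii$, to get the stated $g_1g_2$. Since $a=d$ and $b=c$ here, all four determinants reduce to $a^2-b^2$. With $a=\tfrac12(1-\ell\ii)$ and $b=-\tfrac12(\ii+\ell)$, one checks $a^2-b^2=\tfrac12(1-\ell\ii)$, of modulus $1/\sqrt2$. Corollary~\ref{coro:iso}(2) gives $\Delta(g_i)=1$. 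By Theorem~\ref{thm:blocks}, each $\mathrm{Eval}(g_i;\cdot)$ is then in $\SO(9)$, so the composition is an isometry, while $\mathrm{Eval}(g_1g_2;\cdot)$ is not; hence the two differ. For the explicit value, write $\mathrm j=\ell'q$ with $\ell'=\mathrm j$ and $q=1$, and apply~\eqref{eq:blocks} to get $\mathrm j\cdot\tfrac12(1+\ell\ii)$. Using~\eqref{eq:CDrel}-type rules in the original convention, this equals $\tfrac12(\mathrm j+\ell\mathrm k)$ up to the sign to be confirmed.

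For (d), $g_s$ is quaternionic, and bi-unitarity follows from $|\ii|^2+|\mathrm j|^2=2$ together with $\ii\overline{(-\mathrm j)}+\mathrm j\overline{(-\ii)}=\mathrm k-\mathrm k=0$, and similarly for the other constraints. The four determinants are $ad-cb=-\ii^2/2+\mathrm j^2/2=0$, and likewise for the other three. Theorem~\ref{thm:blocks}(1), with $\ell'=\ell$, then gives the kernel $X(0,\ell\HH)$ and the $\SO(5)$ action on $\RR\oplus\HH$. The main obstacle overall is bookkeeping of signs in (b) and in the explicit outputs of (a) and (c).
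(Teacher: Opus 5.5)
Your proposal is correct and follows essentially the paper's route: direct Cayley--Dickson computations for the special-form check, $[a,b]=-\ell\ii/\sqrt2$, $g_1g_2$ and $T_{g_0}T_{g_0}^*\binom{0}{\mathrm j}$, combined with Theorem~\ref{thm:blocks} and Corollary~\ref{coro:iso}. In (c) you conclude that the two maps differ because one is an isometry and the other is not, where the paper compares their actions on $\ell'\HH'$, a harmless variation. The values you anticipate all check out, including the $+$ sign in $\mathrm j\cdot\tfrac12(1+\ell\ii)=\tfrac12(\mathrm j+\ell\mathrm k)$. The only slip is writing $p=\mathrm j\in\HH'$ in (a); it should read $\mathrm j\perp\HH'$, which your choice $\ell'=\mathrm j$ already presupposes.
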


\begin{proof}
Finite computations with~\eqref{eq:CD}, combined with Theorem~\ref{thm:blocks} and
Corollary~\ref{coro:iso}. For instance, $c=-\bar\alpha\bar b\alpha$ with $\alpha=(1+\ii)/\sqrt2$
gives $c=\ell\ii/\sqrt2$ in (a), and $[a,b]=-\ell\ii/\sqrt2$. In (c), since
$\Delta(g_1g_2)\neq\Delta(g_1)\Delta(g_2)$, Theorem~\ref{thm:blocks} shows that the two maps differ on
$\ell'\HH'$, where they act by left multiplication by $\overline{\Delta(g_1g_2)}$ and by
$\overline{\Delta(g_1)}\,\overline{\Delta(g_2)}=1$ respectively, while they agree on $\RR\oplus\HH'$. All
values were also checked numerically, implementing~\eqref{eq:CD} with real $8\times8$
left-multiplication matrices; the script is available as supplementary material.
\end{proof}

\begin{rem}[Scope of the operator-theoretic picture]
\label{rem:operatorscope}
Composing two operators $T_g\circ T_h$ produces compositions $\Lm x\Lm y$, which are not of the
form $\Lm k$ for a single $k\in\OO$ unless $x$ and $y$ associate with everything; this is
the same failure of $\{\Lm x\}_{x\in\OO}$ to close under composition that underlies
octonionic triality \cite{Baez}. It is the common root of Remark~\ref{rem:Tnotorth} and of
Remark~\ref{rem:diagonal}. On the side of the evaluation, Theorem~\ref{thm:blocks} identifies
the obstruction precisely: the set
$\cL_\OO^{\mathrm{iso}}=\{g\in\cL_\OO:\ \mathrm{Eval}(g;\cdot)\in\mathrm O(9)\}$ is the level set
$\{|ad-cb|=1\}$, and the failure of $\mathrm{Eval}(gh;\cdot)=\mathrm{Eval}(g;\cdot)\circ\mathrm{Eval}(h;\cdot)$
for $g,h$ in a common $\mathrm{Sp}(2)_{\HH'}$ is exactly the failure of the column determinant to be
multiplicative on $\mathrm{Sp}(2)$ (the Dieudonn\'e determinant, which is multiplicative, is trivial
on $\mathrm{Sp}(2)$; the column determinant is not).
\end{rem}

\section{What extends to general algebras}
\label{sec:general}

\begin{defi}[Interlaced evaluation for general $\DA$]
\label{def:generaleval}
For an algebra with standard involution $\DA$, $g\in\cL_\DA$ and $X\in\Herm_2(\DA)_0$, define
$\mathrm{Eval}_\DA(g;X)$ by the RLLR/LRRL prescription of Definition~\ref{def:RLLR}, replacing
$\OO$ by $\DA$.
\end{defi}

We separate three statements of different strength.

\begin{theorem}[Hermiticity: general]
\label{thm:general}
Let $\DA$ be a trace-associative algebra with standard involution. For all $g\in\cL_\DA$ and
$X\in\Herm_2(\DA)_0$, $\mathrm{Eval}_\DA(g;X)\in\Herm_2(\DA)_0$, with diagonal
entry~\eqref{eq:yprime}. This applies in particular to $\RR,\CC,\HH,\OO$ and to the split
octonions.
\end{theorem}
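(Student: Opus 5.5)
This is essentially Proposition~\ref{prop:Hermoutput} restated for a general $\DA$, so the plan is to rerun that argument and check that each step uses only the axioms of Definition~\ref{def:invol}, trace-associativity, and the bi-unitarity constraints \eqref{eq:constraints1}--\eqref{eq:constraints2}. No alternativity is needed.

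\textbf{Conjugation mirror.} By \eqref{eq:conj-swap}, which holds in any algebra with standard involution, the triples in $\mathrm{Eval}_\DA(g;X)$ pair up as in Lemma~\ref{lem:mirror}:
\[
\overline{(b\bar p)\bar a}=a(p\bar b),\qquad \overline{b(\bar p\bar c)}=(cp)\bar b,\qquad \overline{(ap)\bar d}=d(\bar p\bar a),\qquad \overline{(d\bar p)\bar c}=c(p\bar d).
\]
The binary terms also pair correctly, since $\overline{a\bar c-b\bar d}=c\bar a-d\bar b$ and $y$ is real. This gives $X'_{21}=\overline{X'_{12}}$ directly.

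\textbf{Diagonal entries.} Each diagonal entry has the form $y\,(N(\cdot)-N(\cdot))+w+\bar w$. Since $x+\bar x\in\RR1$ for every $x$, this is real and equals $2\Re(w)$. For $X'_{11}$ we take $w=(b\bar p)\bar a$, and \eqref{eq:real-assoc} lets us drop the bracketing, which is \eqref{eq:yprime}.

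\textbf{Trace-free condition.} We need $X'_{11}+X'_{22}=0$.
\begin{itemize}
\item[(i)] By Lemma~\ref{lem:equalnorms}, which is valid for any $\DA$ because its proof only subtracts constraint equations, we have $N(c)-N(d)=-(N(a)-N(b))$. So the $y$-terms cancel.
\item[(ii)] By \eqref{eq:real-cyclic}, $\Re(b\bar p\bar a)=\Re(\bar p(\bar a b))$ and $\Re(d\bar p\bar c)=\Re(\bar p(\bar c d))$. By linearity of $\Re$ and of multiplication, their sum is $\Re\bigl(\bar p(\bar ab+\bar cd)\bigr)=0$, using \eqref{eq:constraints2}.
\end{itemize}

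\textbf{Applicability.} The examples follow from Lemma~\ref{lem:realpart}. $\RR,\CC,\HH$ are associative, hence trivially trace-associative. $\OO$ and the split octonions are alternative with standard involution, hence trace-associative.

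The only delicate point is step (ii). One must confirm that the cyclic identity is applied in the precise bracketed form of \eqref{eq:real-cyclic}, for example $\Re((b\bar p)\bar a)=\Re((\bar p\bar a)b)$, and then reassociate with \eqref{eq:real-assoc} to reach $\Re(\bar p(\bar a b))$. A careful bookkeeping of these two moves settles it.
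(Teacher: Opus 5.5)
Your proposal is correct and follows the paper's own argument: the paper proves the theorem by citing Proposition~\ref{prop:Hermoutput}. That proposition uses the same three steps as yours, namely the conjugation mirror from \eqref{eq:conj-swap}, reality of each diagonal entry as $w+\bar w$, and trace cancellation from Lemma~\ref{lem:equalnorms}, \eqref{eq:real-cyclic} and \eqref{eq:constraints2}, and it relies only on the trace-associativity axioms and the bi-unitarity constraints. Your bookkeeping in step (ii) is also right: $\Re((b\bar p)\bar a)=\Re((\bar p\bar a)b)=\Re(\bar p(\bar a b))$, first by cyclicity and then by \eqref{eq:real-assoc}.
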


\begin{proof}
Proposition~\ref{prop:Hermoutput}, whose proof uses only~\eqref{eq:real-assoc},
\eqref{eq:real-cyclic}, \eqref{eq:conj-swap} and the constraints~\eqref{eq:constraints1}--\eqref{eq:constraints2}.
\end{proof}

\begin{prop}[Preservation of $Q$: under explicit hypotheses]
\label{prop:Qpres}
$Q\bigl(\mathrm{Eval}_\DA(g;X)\bigr)=Q(X)$ holds in each of the following cases:
\begin{enumerate}
\item $\DA$ is associative and $g\in\cL_\DA$ (standard: $gXg^\dagger$ with $g$ unitary, and
$Q(X)=-\det X$ in the commutative cases, $Q(X)\buTwo=X^2$ in general);
\item $\DA=\OO$ and $g\in\cT$ (Theorem~\ref{thm:transvection});
\item $\DA=\OO$ and $g$ diagonal (Remark~\ref{rem:diagonal});
\item more generally, $\DA=\OO$ and $|ad-cb|=1$, which is also necessary
(Theorem~\ref{thm:blocks}).
\end{enumerate}
It fails for some $g\in\cL_\OO$ of the special form (Proposition~\ref{prop:counter}(a)). For the
split octonions $Q$ is indefinite and the analogue of (2) holds with the same proof for
transvections built from $\mu$ with $N(\mu)=1$; we do not pursue this here.
\end{prop}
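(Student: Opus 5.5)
The proof proceeds case by case. In each case we reduce to a result already established, since the proposition mainly collects earlier material.

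\emph{Case (1).} If $\DA$ is associative, all bracketings coincide. Then $\mathrm{Eval}_\DA(g;X)=gXg^\dagger$, which is an honest matrix product. We use the identity $X^2=Q(X)\buTwo$ from Definition~\ref{def:Herm2}. It gives
\[
(gXg^\dagger)^2=gX(g^\dagger g)Xg^\dagger=g\,Q(X)\,g^\dagger=Q(X)\,gg^\dagger=Q(X)\buTwo .
\]
Here we use associativity of matrix multiplication over $\DA$ and the fact that the real scalar $Q(X)$ is central. By Theorem~\ref{thm:general} the output lies in $\Herm_2(\DA)_0$, so its square is $Q(X')\buTwo$. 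Hence $Q(X')=Q(X)$. In the commutative cases this is just $-\det(gXg^\dagger)=-\det X\,|\det g|^2$.

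\emph{Cases (2) and (3).} Case (2) is Theorem~\ref{thm:transvection}(3): there the evaluation equals the rotation $\rho(T_g)\in\SO(9)$ given by \eqref{eq:transvrot}. Case (3) is read off from Remark~\ref{rem:diagonal}. For $g=\mathrm{diag}(u,v)$ the RLLR entry $(1,2)$ is $(up)\bar v$, and $|(up)\bar v|=|p|$ by multiplicativity of the norm. The diagonal equals $y$ by \eqref{eq:yprime}, because $b=0$ and $N(a)=1$. Alternatively, cases (2) and (3) both follow from (4), since Corollary~\ref{coro:iso}(2) gives $|\Delta|=1$ there.

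\emph{Case (4).} This is Theorem~\ref{thm:blocks}(3) with $\epsilon=\Lp\Rp$. By Lemma~\ref{lem:quatred}, $g$ lies in some $\mathrm{Sp}(2)_{\HH'}$. Formula \eqref{eq:blocks} splits $X$ orthogonally into $X(y,p_1)$ and $X(0,\ell'q)$. On the first summand the action is the quaternionic action, which is isometric by case (1) applied to $\DA=\HH'$. On the second summand the action is left multiplication by $\overline{ad-cb}$, which scales norms by $|ad-cb|$. The images of the two summands stay orthogonal. Hence $Q$ is preserved for all $X$ if and only if $|ad-cb|=1$, which gives both the sufficiency and the necessity claims.

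\emph{Failure example and split octonions.} The failure statement is Proposition~\ref{prop:counter}(a). For the split-octonion remark we rerun the proof of Theorem~\ref{thm:transvection}(1)--(3). The Clifford relation of Proposition~\ref{prop:clifford} still holds with the indefinite $N$, by Lemma~\ref{lem:Lx}(2), whose proof used only alternativity and $N(\mu)=1$. The reflection formula then gives an isometry of $Q$.

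The only step needing any care is the orthogonality bookkeeping in (4). There one must check that the $\RR\oplus\HH'$ and $\ell'\HH'$ components remain orthogonal after evaluation, so that $Q$ adds. This follows because $\HH'\perp\ell'\HH'$ and the diagonal receives no contribution from $\ell'\HH'$.
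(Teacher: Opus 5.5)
Your proposal is correct and follows the paper's proof. Case (1) is the same computation $(gXg^\dagger)^2=gX^2g^\dagger=Q(X)\buTwo$, and cases (2)--(4) and the failure example are, as in the paper, read off from Theorem~\ref{thm:transvection}, Remark~\ref{rem:diagonal}, Theorem~\ref{thm:blocks} and Proposition~\ref{prop:counter}(a). One small slip in the split-octonion aside: Lemma~\ref{lem:Lx}(2) uses only alternativity and $x\bar x=N(x)$, while $N(\mu)=1$ actually enters through $Q(Y_{\omega,\mu})=1$ in the reflection formula and through Lemma~\ref{lem:Lx}(3).
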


\begin{proof}
In the associative case $(gXg^\dagger)^2=gX^2g^\dagger=Q(X)gg^\dagger=Q(X)\buTwo$. The other cases
were proved above.
\end{proof}

\begin{prop}[Action property: requires additional structure]
\label{prop:action}
If $\DA$ is associative, $g\mapsto\mathrm{Eval}_\DA(g;\cdot)$ is a group action of $\cL_\DA$ on
$\mathbb S^{\dim\DA}$. For $\DA=\OO$, $\mathrm{Eval}(gh;\cdot)\neq\mathrm{Eval}(g;\cdot)\circ\mathrm{Eval}(h;\cdot)$
in general, even for $g,h\in\cT$, because the column determinant is not multiplicative
(Proposition~\ref{prop:counter}(c)); a genuine action of
$\Spin(9)$ is obtained by composing evaluations along words in $\cT$
(Corollary~\ref{coro:words}).
\end{prop}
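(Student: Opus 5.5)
For the associative part, I would first check that for $\DA\in\{\RR,\CC,\HH\}$ all bracketings coincide. Then $\mathrm{Eval}_\DA(g;X)=gXg^\dagger$. Associativity of the matrix product over $\DA$ gives
\[
\mathrm{Eval}_\DA(gh;X)=(gh)X(gh)^\dagger=g\bigl(hXh^\dagger\bigr)g^\dagger,
\]
using $(gh)^\dagger=h^\dagger g^\dagger$. The latter holds entrywise from $\overline{xy}=\bar y\bar x$, applied only to binary products. Also $\mathrm{Eval}_\DA(\buTwo;X)=X$. By Proposition~\ref{prop:Qpres}(1) and Theorem~\ref{thm:general}, each $\mathrm{Eval}_\DA(g;\cdot)$ maps $\Herm_2(\DA)_0$ to itself and preserves $Q$, hence preserves the unit sphere $\mathbb S^{\dim\DA}$. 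Combined with the fact that $\cL_\DA$ is a group (Remark~\ref{rem:Lgroup}), these three facts make $g\mapsto\mathrm{Eval}_\DA(g;\cdot)$ a group action.

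For the octonionic part, I would simply invoke Proposition~\ref{prop:counter}(c), which carries the real content. There $g_1=g_{\pi/4,\ii}$ and $g_2=g_{\pi/4,\ell}$ lie in $\cT$, and their product $g_1g_2$ lies in $\mathrm{Sp}(2)_{\HH'}$ with $\HH'=\mathrm{span}(1,\ii,\ell,\ell\ii)$. By Theorem~\ref{thm:transvection}(3), $\mathrm{Eval}(g_1;\cdot)\circ\mathrm{Eval}(g_2;\cdot)$ is a composition of two rotations, hence an isometry. By Theorem~\ref{thm:blocks}, $\mathrm{Eval}(g_1g_2;\cdot)$ acts on $\ell'\HH'$ as left multiplication by $\overline{\Delta(g_1g_2)}$, where $\Delta(g_1g_2)=\tfrac12(1-\ell\ii)$ has modulus $1/\sqrt2$. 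It is therefore a strict contraction there. So the two maps cannot coincide, and this holds for every admissible rule.

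To attribute the failure to non-multiplicativity of the column determinant, I would note the following. On $\RR\oplus\HH'$ both maps agree with the ordinary $\mathrm{Sp}(2)_{\HH'}$ action, which is multiplicative. On $\ell'\HH'$ one is multiplication by $\overline{\Delta(g_1g_2)}$. The other is multiplication by $\overline{\Delta(g_1)}\,\overline{\Delta(g_2)}=1$, using Corollary~\ref{coro:iso}(2); here one should check that the transvections $g_1,g_2$ also lie in $\mathrm{Sp}(2)_{\HH'}$ for this same $\HH'$, which holds since $\ii,\ell\in\HH'$. Thus the discrepancy is exactly $\Delta(g_1g_2)\neq\Delta(g_1)\Delta(g_2)$.

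The final clause, that composing evaluations along words in $\cT$ yields a genuine $\Spin(9)$ action, is Corollary~\ref{coro:words} together with $\rho$ being a homomorphism. No step is a real obstacle. The only point needing care is the explicit product $g_1g_2$ and its determinant, which is a finite computation with \eqref{eq:CD} already recorded in Proposition~\ref{prop:counter}(c).
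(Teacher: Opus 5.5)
Your proposal is correct and follows the paper's own (implicit) argument. The associative case is the standard computation $(gh)X(gh)^\dagger=g(hXh^\dagger)g^\dagger$ combined with Proposition~\ref{prop:Qpres}(1). The octonionic failure is exactly Proposition~\ref{prop:counter}(c), obtained from Theorem~\ref{thm:blocks} on the common copy $\mathrm{Sp}(2)_{\HH'}$ with $\HH'=\mathrm{span}(1,\ii,\ell,\ell\ii)$, and the $\Spin(9)$ clause is Corollary~\ref{coro:words}. Your additional isometry-versus-contraction remark (a composition of two rotations cannot equal a map with singular value $1/\sqrt2$) gives the inequality without comparing the determinants.
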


\begin{rem}[Split octonions and beyond]
For the split octonions ($\ell^2=+1$ in~\eqref{eq:CD}), the norm $N$ is nondegenerate of
signature $(4,4)$, so they are not covered by Definition~\ref{def:normed}, but they are covered
by Definition~\ref{def:invol}; Theorem~\ref{thm:general} applies. The $2\times2$ mnemonic for
split octonionic multiplication is studied in \cite{gazeau26}. For the higher Cayley--Dickson
algebras (sedenions, \dots), conjugation is still an anti-automorphism, so the mirror
Lemma~\ref{lem:mirror} holds; alternativity fails, so Lemma~\ref{lem:Lx}, the Clifford
relations and everything in Section~\ref{sec:scope} would have to be reconsidered.
\end{rem}

\section{Conclusion}
\label{sec:conclusion}

We have developed a common $2\times2$ matrix framework for the division-algebra Hopf
fibrations and determined how far it extends to the octonions.

\paragraph{Main results.}
Every octonionic bi-unitary matrix is quaternionic, for a quaternion subalgebra $\HH'$ depending
on it, so that $\cL_\OO$ is a union of copies of $\mathrm{Sp}(2)$ (Lemma~\ref{lem:quatred}).
Requiring Hermitian output leaves exactly four bracketing rules (Proposition~\ref{prop:admissible}),
and each of them splits orthogonally as the $\Spin(5)$ rotation of $\RR\oplus\HH'$ plus left
multiplication by a non-commutative determinant on $\HH'^\perp$ (Theorem~\ref{thm:blocks}). For
RLLR/LRRL this determinant is the column determinant $ad-cb$: the evaluation is an isometry
exactly when $|ad-cb|=1$, a contraction otherwise, and singular when $ad-cb=0$. This answers, for
all four admissible rules, the question of where octonionic $2\times2$ matrix calculus preserves
the geometry of $\mathbb S^8$ outside the compatible class used in earlier work
\cite{ManogueSchray1993,ManogueDray1999,DrayManogue2015}, and it gives the RLLR/LRRL rule an intrinsic characterisation which
Hermiticity alone does not provide.

\paragraph{The compatible case.}
For the octonionic transvections $g_{\omega,\mu}$, which are compatible matrices, all
bracketings of $(gX)g^\dagger$ coincide and the resulting map is the $\Spin(9)$
rotation~\eqref{eq:intro-rot}; since the transvections generate $\Spin(9)$, nested evaluations
realise the whole $\Spin(9)$ action and the equivariance of the octonionic Hopf map
(Theorem~\ref{thm:transvection}, Corollary~\ref{coro:words}). This is the compact counterpart of the
nested Lorentz transformations of Manogue, Schray and Dray \cite{ManogueSchray1993,ManogueDray1999,DrayManogue2015}; our
contribution here is the Clifford factorisation of the generators and their interpretation as
transvections of $\mathbb S^8$.

\paragraph{Convention-free and general statements.}
The Hopf map is the image of the north pole, $(gX_N)g^\dagger=X(\pi(a,c))$ with $(a,c)$ the first
column of $g$, for every $g\in\cL_\DA$ and every $\DA\in\{\RR,\CC,\HH,\OO\}$, with no bracketing
convention (Proposition~\ref{prop:hopfmatrix}). The Hermiticity of the admissible evaluations
holds over any trace-associative algebra with standard involution, including the split octonions
(Theorem~\ref{thm:general}).

\paragraph{The central message.}
Non-associativity does not prevent a $2\times2$ octonionic description of the full $\Spin(9)$
symmetry; it prevents that description from being realised through unrestricted multiplication
of octonionic matrices. The symmetry survives through compositions of suitably chosen
elementary transformations, the transvections, for which every bracketing agrees, as in the
nested constructions of \cite{ManogueSchray1993,ManogueDray1999,DrayManogue2015}. Beyond them,
the obstruction is not a matter of bracketing convention: even within a single copy
$\mathrm{Sp}(2)_{\HH'}$, where the matrix product is associative, the evaluation of a product
differs from the composition of evaluations because the column determinant is not
multiplicative (Proposition~\ref{prop:counter}(c)). For the physical uses of
$X\mapsto gXg^\ddagger$ recalled in the Introduction, this means that the octonionic version of
the formalism is faithful only as a calculus of composed elementary transformations.

\paragraph{Open questions.}
(i) Give a geometric interpretation of the four determinants $\Delta_\epsilon$ on
$\cL_\OO=\bigcup_{\HH'}\mathrm{Sp}(2)_{\HH'}$, for instance in terms of the Grassmannian of
quaternion subalgebras and of triality, and describe the level sets $\{|\Delta_\epsilon|=1\}$.
(ii) Clarify the relation between the column determinant governing RLLR/LRRL and the
pseudo-determinants of the interlaced product of \cite{gazeau26}, and whether the latter product
restricted to $\cL^{\mathrm{iso}}_\OO$ is compatible with the evaluation. (iii) Relate the nested
evaluations of Corollary~\ref{coro:words} to a non-associative product on (a subset of)
$\cL_\OO$, possibly through the triality maps.



\begin{thebibliography}{99}

\bibitem{Hopf}
H.~Hopf,
\"{U}ber die Abbildungen der dreidimensionalen Sph\"{a}re auf die Kugelfl\"{a}che,
\emph{Math. Ann.} \textbf{104} (1931), 637--665.

\bibitem{Adams}
J.~F.~Adams,
On the non-existence of elements of Hopf invariant one,
\emph{Ann. of Math.} (2) \textbf{72} (1960), 20--104.

\bibitem{Husemoller}
D.~Husemoller,
\emph{Fibre Bundles}, 3rd ed.,
Graduate Texts in Mathematics 20, Springer, 1994.

\bibitem{PenroseRindler}
R.~Penrose and W.~Rindler, \emph{Spinors and Space-Time}, Vol.~1,
Cambridge University Press, 1984.

\bibitem{Sudbery1984}
A.~Sudbery,
Division algebras, (pseudo)orthogonal groups and spinors,
\emph{J. Phys. A: Math. Gen.} \textbf{17} (1984), 939--955.

\bibitem{KugoTownsend1983}
T.~Kugo and P.~K.~Townsend, Supersymmetry and the division algebras,
\emph{Nucl.\ Phys.\ B} \textbf{221} (1983), 357--380.

\bibitem{Evans1988}
J.~M.~Evans, Supersymmetric Yang--Mills theories and division algebras,
\emph{Nucl.\ Phys.\ B} \textbf{298} (1988), 92--108.

\bibitem{BaezHuerta2010}
J.~C.~Baez and J.~Huerta, Division algebras and supersymmetry I,
in \emph{Superstrings, Geometry, Topology, and $C^*$-algebras},
Proc.\ Sympos.\ Pure Math.\ \textbf{81}, AMS, 2010, 65--80.

\bibitem{Takahashi1963}
R.~Takahashi, Sur les repr\'esentations unitaires des groupes de Lorentz g\'en\'eralis\'es,
\emph{Bull.\ Soc.\ Math.\ France} \textbf{91} (1963), 289--433.

\bibitem{Ahlfors1985}
L.~V.~Ahlfors, M\"obius transformations and Clifford numbers, in
\emph{Differential Geometry and Complex Analysis}, Springer, 1985, 65--73.

\bibitem{gazpejtod26} J.-P. Gazeau, H. Pejhan, and I. Todorov, \emph{Massless Representations in Conformal Space and Their dS Restrictions}, forthcoming in Cambridge Monographs on Mathematical Physics, Cambridge University Press; arXiv:2601.18433 (2026).

\bibitem{Fock1935}
V.~Fock, Zur Theorie des Wasserstoffatoms, \emph{Z.\ Phys.} \textbf{98} (1935), 145--154.

\bibitem{BarutKleinert1967}
A.~O.~Barut and H.~Kleinert, Transition probabilities of the hydrogen atom from
noncompact dynamical groups, \emph{Phys.\ Rev.} \textbf{156} (1967), 1541--1545.

\bibitem{Gazeau1978}
J.-P.~Gazeau, Four Euclidean conformal group in atomic calculations: exact analytical
expressions for the bound-bound two-photon transition matrix elements  in the H atom,   
\emph{J.\ Math.\ Phys.} \textbf{19} (1978), 1041; erratum \emph{ibid.} \textbf{21} (1980).

\bibitem{Gazeau1979}
J.-P.~Gazeau, On the four Euclidean conformal group structure of the Sturmian operator,
\emph{Lett.\ Math.\ Phys.} \textbf{3} (1979), 285--292.

\bibitem{MosseriDandoloff2001}
R.~Mosseri and R.~Dandoloff, Geometry of entangled states, Bloch spheres and Hopf
fibrations, \emph{J.\ Phys.\ A} \textbf{34} (2001), 10243--10252.

\bibitem{BernevigChen2003}
B.~A.~Bernevig and H.-D.~Chen, Geometry of the three-qubit state, entanglement and
division algebras, \emph{J.\ Phys.\ A} \textbf{36} (2003), 8325--8339.

\bibitem{JordanvNW1934}
P.~Jordan, J.~von Neumann and E.~Wigner, On an algebraic generalization of the quantum
mechanical formalism, \emph{Ann.\ of Math.} \textbf{35} (1934), 29--64.

\bibitem{GunaydinGursey1973}
M.~G\"unaydin and F.~G\"ursey, Quark structure and octonions,
\emph{J.\ Math.\ Phys.} \textbf{14} (1973), 1651--1667.

\bibitem{ManogueSchray1993}
C.~A.~Manogue and J.~Schray,
Finite Lorentz transformations, automorphisms, and division algebras,
\emph{J. Math. Phys.} \textbf{34} (1993), 3746--3767.

\bibitem{ManogueDray1999}
C.~A.~Manogue and T.~Dray,
Octonionic M\"obius transformations,
\emph{Mod. Phys. Lett. A} \textbf{14} (1999), 1243--1255.

\bibitem{DrayManogue2015}
T.~Dray and C.~A.~Manogue,
\emph{The Geometry of the Octonions},
World Scientific, 2015.

\bibitem{gazeau24}
J.-P.~Gazeau, The Language of Spheres in Physics, \emph{Universe} \textbf{10}(3) (2024), 117.

\bibitem{Harvey}
F.~R.~Harvey,
\emph{Spinors and Calibrations},
Academic Press, 1990.

\bibitem{LawsonMichelsohn}
H.~B.~Lawson and M.-L.~Michelsohn,
\emph{Spin Geometry},
Princeton University Press, 1989.

\bibitem{OrneaPartonPiccinniVuletescu}
L.~Ornea, M.~Parton, P.~Piccinni and V.~Vuletescu,
Spin$(9)$ geometry of the octonionic Hopf fibration,
\emph{Transformation Groups} \textbf{18} (2013), 845--864.

\bibitem{PartonAxioms}
M.~Parton,
The Role of Spin$(9)$ in Octonionic Geometry,
\emph{Axioms} \textbf{7} (2018), 72.

\bibitem{Baez}
J.~C.~Baez,
The Octonions,
\emph{Bull.\ Amer.\ Math.\ Soc.} \textbf{39} (2002), 145--205.

\bibitem{Schafer}
R.~D.~Schafer,
\emph{An Introduction to Nonassociative Algebras},
Academic Press, 1966.

\bibitem{Okubo}
S.~Okubo,
\emph{Introduction to Octonion and Other Non-Associative Algebras in Physics},
Cambridge University Press, 1995.

\bibitem{SpringerVeldkamp}
T.~A.~Springer and F.~D.~Veldkamp,
\emph{Octonions, Jordan Algebras and Exceptional Groups},
Springer, 2000.

\bibitem{ConwaySmith}
J.~H.~Conway and D.~A.~Smith,
\emph{On Quaternions and Octonions},
A K Peters/CRC Press, 2003.

\bibitem{gazeau26}
J.-P.~Gazeau,
A Matrix Mnemonic for Octonions and Cayley--Dickson Tower,
\textit{Advances in Applied Clifford Algebras} \textbf{36}, 34 (2026),
\url{https://doi.org/10.1007/s00006-026-01452-8}.

\bibitem{Dieudonne1943}
J.~Dieudonn\'e,
Les d\'eterminants sur un corps non commutatif,
\emph{Bull.\ Soc.\ Math.\ France} \textbf{71} (1943), 27--45.

\bibitem{Dieudonne1955}
J.~Dieudonn\'e,
\emph{La G\'eom\'etrie des Groupes Classiques},
Ergebnisse der Mathematik und ihrer Grenzgebiete, Springer, 1955 (3rd ed.\ 1971).

\bibitem{Helgason1978}
S.~Helgason,
\emph{Differential Geometry, Lie Groups, and Symmetric Spaces},
Academic Press, 1978 (Graduate Studies in Mathematics reprint, AMS, 2001).

\bibitem{Aslaksen1996}
H.~Aslaksen,
Quaternionic determinants,
\emph{Math.\ Intelligencer} \textbf{18} (1996), no.~3, 57--65.

\end{thebibliography}
\end{document}